\newtheorem{theo}{Theorem}[section]
\newtheorem*{theo*}{Theorem}
\newtheorem{prop}[theo]{Proposition}
\newtheorem{lem}[theo]{Lemma}
\newtheorem{cor}[theo]{Corollary}
\newtheorem{question}[theo]{Question}
\def\R{{\mathbb R}}
\def\Z{{\mathbb Z}}
\def\N{{\mathbb N}}
\def\QQ{{\mathbb Q}}
\def\T{{\mathbb T}}
\def\cC{{\mathcal C}}
\def\cC{{\mathcal C}}
\def\cA{{\mathcal A}}
\def\cL{{\mathcal L}}
\def\cU{{\mathcal U}}
\def\cV{{\mathcal V}}
\def \Q {{\bf Q}}
\def \RP {{\bf RP}}
\def \id {{\rm id}}
\def \e {\epsilon}
\def\ocirc#1{\ifmmode\setbox0=\hbox{$#1$}\dimen0=\ht0
    \advance\dimen0 by1pt\rlap{\hbox to\wd0{\hss\raise\dimen0
    \hbox{\hskip.2em$\scriptscriptstyle\circ$}\hss}}#1\else
    {\accent"17 #1}\fi} 
\begin{document}
\title{On automorphism groups of low complexity subshifts}

\author{Sebasti\'an Donoso}
\address{Departamento de Ingenier\'{\i}a
Matem\'atica, Universidad de Chile, Beauchef 851, Santiago, Chile.  \newline  Universit\'e Paris-Est, Laboratoire d'Analyse et de Math\'ematiques
Appliqu\'ees, 5 bd Des\-cartes, 77454 Marne la Vall\'ee
Cedex 2, France} \email{sdonoso@dim.uchile.cl, sebastian.donoso@univ-paris-est.fr }

\author{Fabien Durand}
\address{Laboratoire Ami\'enois
de Math\'ematiques Fondamentales et Appliqu\'ees, CNRS-UMR 7352, Universit\'{e} de Picardie Jules Verne, 33 rue Saint Leu, 80039   Amiens cedex 1,
France.} \email{fabien.durand@u-picardie.fr}

\author{Alejandro Maass}
\address{Departamento de Ingenier\'{\i}a
Matem\'atica and Centro de Modelamiento Ma\-te\-m\'a\-ti\-co, CNRS-UMI 2807, Universidad de Chile, Beauchef 851, Santiago,
Chile.}\email{amaass@dim.uchile.cl}

\author{Samuel Petite}
\address{Laboratoire Ami\'enois
de Math\'ematiques Fondamentales et Appliqu\'ees, CNRS-UMR 7352, Universit\'{e} de Picardie Jules Verne, 33 rue Saint Leu, 80039   Amiens cedex 1,
France.} \email{samuel.petite@u-picardie.fr}

\subjclass[2010]{Primary: 54H20; Secondary: 37B10} \keywords{Minimal subshifts, automorphism group, complexity}

\thanks{This research was partially supported by grants Basal-CMM \& Fondap 15090007, CONICYT Doctoral fellowship 21110300, ANR grants SubTile, DynA3S and FAN, and the cooperation project MathAmSud DYSTIL. The first and third authors thank the University of Picardie Jules Verne where this research was finished.}

\date{July 1, 2015}

\markboth{Sebasti\'an Donoso, Fabien Durand, Alejandro Maass, Samuel Petite}{On the automorphism group of a minimal subshift}

\begin{abstract}
In this article we study the automorphism group ${\rm Aut}(X,\sigma)$ of subshifts $(X,\sigma)$ of low word complexity. In particular, we prove that Aut$(X,\sigma)$ is virtually $\Z$ for aperiodic minimal subshifts and certain transitive subshifts with non-superlinear complexity. More precisely, the quotient of this group relative to the one generated by the shift map is a finite group. 
In addition, we show that any finite group can be obtained in this way. 
The class considered includes minimal subshifts induced by substitutions, linearly recurrent subshifts and even some subshifts which simultaneously exhibit non-superlinear and superpolynomial complexity along different subsequences. 
The main technique in this article relies on the study of classical relations among points used in topological dynamics, in particular, asymptotic pairs. Various examples that illustrate the technique developed in this article are provided. In particular, we prove that the group of automorphisms of a $d$-step nilsystem is nilpotent of order $d$ and from there we produce minimal subshifts of arbitrarily large polynomial complexity whose automorphism groups are also virtually $\Z$. 

\end{abstract}

\maketitle

\section{Introduction}

An automorphism of a topological dynamical system $(X,T)$, where $T\colon X\to X$ is a homeomorphism of the compact metric space $X$, is a homeomorphism from $X$ to itself which commutes with $T$. We call Aut$(X,T)$ the group of automorphisms of $(X,T)$. There is an analogous definition of measurable automorphism for measure-preserving systems $(X,{\mathcal B},\mu,T)$, where $(X,{\mathcal B},\mu)$ is a standard probability space and $T: X \to X$ a measure-preserving transformation of this space. The group of measurable automorphisms is historically denoted by $C(T)$. This notation stands for the {\em centralizer group} of $(X,{\mathcal B},\mu,T)$.

The study of automorphism groups is a classical and widely considered subject in ergodic theory. The group $C(T)$ has been intensively studied for mixing measure-preserving systems of finite rank. The reader is referred to \cite{F97} for a complete survey.
Let us mention some key theorems. D. Ornstein~\cite{Or72} proved that a mixing measure-preserving system of rank one has a trivial group of measurable automorphisms which consists of powers of $T$. Later, A. del Junco \cite{dJ78} showed that the well studied weakly mixing (but not mixing) rank one Chacon subshift also has this property. Finally, for mixing measure-preserving systems of finite rank, J. King and J.-P. Thouvenot (see \cite{KT91}) proved  that $C(T)$ is virtually 
$\mathbb{Z}$, that is, its quotient relative to the subgroup $\langle T \rangle$ generated by $T$ is a finite group. 

In the non-weakly mixing case, B. Host and F. Parreau \cite{HP} proved that $C(T)$ is also virtually $\mathbb{Z}$ for a family of uniquely ergodic subshifts arising from constant-length substitutions and equals ${\rm Aut}(X,T)$. Concomitantly, 
M. Lema\'nczyk and M. Mentzen \cite{LM} proved that any finite group can be obtained as a quotient $C(T)/\langle T \rangle$ using substitution subshifts 
satisfying Host-Parreau's result.

In the topological setting, since the seminal work of G.A. Hedlund \cite{Hed}, several results have shown that the group of automorphisms for classes of subshifts in which the complexity grows quickly with word length might possess a very rich collection of subgroups. 
Here, by complexity we mean the increasing function $p_X\colon \N\to \N$ which counts the number of  words of length $n\in \N$  appearing in points of the subshift $(X,\sigma)$, where $\sigma$ is the shift map. In particular, the automorphism group of the fullshift on two symbols contains isomorphic copies of any finite group \cite{Hed} and the automorphism group of a mixing shift of finite type contains the free group on two generators, the direct sum of countably many copies of $\mathbb{Z}$ and the direct sum of every countable collection of finite groups \cite{BLD,KR90}. Similar richness in automorphism groups has been found in synchronized systems \cite{FF} and in multidimensional subshifts \cite{Hoch,Ward}. 

In contrast, there is much evidence in the measurable and topological setting to suggest that low complexity systems ought to have a ``small'' automorphism group (\cite{HP,LM,C,Ol13,ST}).
Recently V. Salo and I. T\"orm\"a in \cite{ST} considered this problem in the context of subshifts generated by constant-length or primitive Pisot substitutions and proved that the group of automorphisms is virtually $\Z$. This generalizes the seminal result of E. Coven concerning
constant-length substitutions on two letters \cite{C}. In \cite{ST}, the authors  also asked whether or not the same result holds for subshifts constructed from  primitive substitutions or, even more generally, for linearly recurrent subshifts \cite{D}. 

In Theorem \ref{prop:fini} of Section \ref{section:affine}, we give a positive answer to the latter question, proving that the group of automorphisms of a transitive subshift is virtually $\Z$ if the subshift satisfies
$\liminf_{n\to +\infty} \frac{p_X(n)}{n} < \infty$ together with a technical condition on the asymptotic pairs (which happens to be satisfied by aperiodic minimal subshifts). 
The class of systems satisfying this condition includes primitive substitutions, linearly recurrent subshifts  and, more generally, any minimal subshift with linear complexity. Moreover, since the condition of the theorem involves a  
$\liminf$, Theorem \ref{prop:fini} also applies to subshifts which simultaneously present non-superlinear and superpolynomial complexity along different subsequences. Explicit examples are given in Section \ref{sec:example}. 
Our main tool for proving Theorem \ref{prop:fini} is a detailed study of the structure of asymptotic pairs in the subshifts under consideration. These points always exist in an aperiodic subshift \cite[Chapter 1]{Aus}. This strategy is related to the study of {\it asymptotic composants} introduced by M. Barge and B. Diamond in \cite{BD1}. This last notion proved to be a powerful invariant for studying one-dimensional substitution tiling spaces.

It is natural to ask which finite groups can arise as a quotient {\rm Aut}$(X,\sigma)/\langle \sigma \rangle$ for subshifts satisfying the conditions of Theorem \ref{prop:fini}. 
As discussed above, a byproduct of the results in \cite{HP} and \cite{LM} shows that any finite group $G$ is isomorphic to the quotient group $\text{Aut}(X,\sigma)/\langle \sigma \rangle$ of a constant-length substitutive minimal subshift $(X,\sigma)$. Here we provide a direct proof of this result by giving an explicit constant-length substitutive minimal subshift   
such that $\text{Aut}(X, \sigma)$ is isomorphic to ${\mathbb Z}\oplus G$ (Theorem \ref{prop:direct}).  
 
In the process of submitting this article, we became aware of a new article by V. Cyr and B. Kra \cite{CK2}. While our Theorem \ref{prop:fini} and Theorem 1.4 in \cite{CK2} seem very close to each other, the methods and directions pursued in both articles are quite different. Our technique consists of looking at the action of automorphisms on the asymptotic pairs of a subshift. Together with studying the action of automorphisms on other interesting equivalence relations associated to special topological factors (mainly maximal equicontinuous factors and $d$-step nilfactors), this  has enabled us to shed light on the properties of the automorphism groups of several classes of  transitive subshifts which exhibit complexities with polynomial or higher growth. In comparison, the authors of \cite{CK2} explore the world of systems whose complexity grows at most linearly and that are not necessarily transitive.

The automorphism group of subshifts with superlinear complexity 
($\lim_{n\to +\infty}$ $p_X(n)/n) = \infty$) seems more complicated to manage than the non-superlinear case. In \cite{CK}, it was proved that the quotient of the automorphism group relative to the  group  generated by the shift is periodic for transitive subshifts with subquadratic complexity, meaning that any element in this group has finite order. The proof of this result was achieved by means of studying a $\Z^2$ coloring problem  and uses a deep combinatorial result of A. Quas and L. Zamboni \cite{QZ}. 

In this article, we also explore zero entropy subshifts with superlinear complexity in several directions. We mainly discover classes of examples where the groups of automorphisms still show a small growth rate or are abelian. 
Our first class of examples arises from the study of symbolic extensions of nilsystems. In Section \ref{sec:nilsystems}, we prove that, for every integer $d\geq 1$, the groups of automorphisms of proximal extensions of $d$-step nilsystems are $d$-step nilpotent groups. This result is then used to construct subshifts with arbitrary polynomial complexity and  automorphism groups virtually isomorphic to $\Z$ (Theorem \ref{teo:affine}). The main tool used to prove this result is a detailed study of the regionally proximal relation of order $d$ for such subshifts (\cite{HKM},\cite{SY}). Then, in Section \ref{sec:realization}
we provide a subshift with superlinear complexity whose automorphism group is isomorphic to $\Z^d$ for some $d \in \N$. 

We conclude the article by asking several questions and by proposing directions for future research. In particular, we explore the {\it visiting time} map associated to a subshift $(X,\sigma)$ as an alternative to word complexity. We propose studying the increasing function 
$R''_X\colon \N\to \N$ which, for every $n\in \N$, gives the minimum possible length of words having as subwords all words of length $n$ that appear in points in the subshift \cite{Cassaigne:98}. 
In Proposition \ref{teo:nilvitual}, we prove that any finitely generated subgroup of the automorphism group of a subshift with visiting time map of polynomial growth is virtually nilpotent. This result is somehow parallel to Theorem 1.1 in \cite{CK2}, but applies to subshifts with visiting time map of at most polynomial growth rather than those of linear word complexity.

\section{Preliminaries, notation and background}

\subsection{Topological dynamical systems}

A {\it topological dynamical system} (or just a system) is a homeomorphism $T\colon X \to X$, where $X$ is a compact metric space. It is classically denoted by $(X,T)$.  Let $\text{dist}$ be a distance in $X$ and denote by $\text{Orb}_T(x)$ the orbit $\{T^nx ; n\in \Z\}$ of $x \in X$. A topological dynamical system is {\it minimal} if the orbit of every point is dense in $X$ and is {\it transitive} if at least one orbit is dense in $X$. In a transitive system, points with dense orbits are called {\it transitive points}. The $\omega$-{\em limit set} $\omega(x)$ of a point $x \in X$ is the set of accumulation points of the positive orbit of $x$, or  formally 
$\omega(x) = \bigcap_{n\ge0} \overline{\{T^{k}x; k \ge n \}}$. 

Let $(X,T)$ be a topological dynamical system. We say that $x,y\in X$ are {\em proximal} if there exists a sequence $(n_i)_{i\in \N}$ in $\Z$ such that $\lim_{i\to +\infty} \text{dist}(T^{n_i} x, T^{n_i} y)=0.$ 
A stronger condition than proximality is asymptoticity. Two points $x, y \in X$ are said to be {\em asymptotic} if $\lim_{n\to +\infty} \text{dist}(T^n x, T^n y) =0.$
Nontrivial asymptotic pairs may not exist in an arbitrary topological dynamical system but it is well known that a nonempty aperiodic 
subshift always admits at least one \cite[Chapter 1]{Aus}. 

A {\it factor map} between the topological dynamical systems $(X,T)$ and $(Y,S)$ is a continuous onto map $\pi\colon X\to Y$ such that $\pi\circ T=S\circ \pi$ ($T$ and $S$ commute). We say that $(Y,S)$ is a {\it factor} of $(X,T)$ and that $(X,T)$ is an {\it extension} of $(Y,S)$. We use the notation $\pi\colon (X,T)\to (Y,S)$ to indicate the factor map. If in addition $\pi$ is a bijective map we say that $(X,T)$ and $(Y,S)$ are {\it topologically conjugate}. 

We say that $(X,T)$ is a {\em proximal extension} of $(Y,S)$ via the factor map $\pi\colon (X,T)\to (Y,S)$ (or that the factor map itself is a {\it proximal extension}) if for every $x,x'\in X$ the condition $\pi(x)=\pi(x')$ implies that $x,x'$ are proximal.
For minimal systems, $(X,T)$ is an {\em almost one-to-one extension} of $(Y,S)$ via the factor map $\pi:(X,T)\to (Y,S)$ (or the factor map itself is an {\it almost one-to-one extension}) if there exists $y \in Y$ with a unique preimage for the map $\pi$.
The relation between these two notions is given by the following folklore lemma.
We provide a proof for completeness. 
\begin{lem}\label{lem:folklore}
If the factor map $\pi:(X,T)\to (Y,S)$ between minimal systems  
is an almost one-to-one extension then it is also a proximal extension.  
\end{lem}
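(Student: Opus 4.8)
The plan is to exploit the single fibre that is a singleton together with the minimality of the base to force the two candidate points into arbitrarily close positions along a well-chosen sequence of iterates. First I would fix, using the almost one-to-one hypothesis, a point $y_0\in Y$ whose fibre is a singleton, say $\pi^{-1}(\{y_0\})=\{x_0\}$. Given any pair $x,x'\in X$ with $\pi(x)=\pi(x')=:y$, the goal is to produce a sequence $(n_i)$ in $\Z$ with $\text{dist}(T^{n_i}x,T^{n_i}x')\to 0$.

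The key observation is that minimality of $(Y,S)$ makes the orbit of $y$ dense, so I can choose integers $n_i$ with $S^{n_i}y\to y_0$. By compactness of $X$, after passing to a subsequence (applied twice, or via a diagonal extraction) I may assume $T^{n_i}x\to a$ and $T^{n_i}x'\to b$ for some $a,b\in X$. Continuity of $\pi$ together with the intertwining relation $\pi\circ T=S\circ\pi$ then gives $\pi(a)=\lim_{i\to+\infty}\pi(T^{n_i}x)=\lim_{i\to+\infty}S^{n_i}\pi(x)=\lim_{i\to+\infty}S^{n_i}y=y_0$, and likewise $\pi(b)=y_0$. Since $y_0$ has a unique preimage, this forces $a=b=x_0$, and the triangle inequality yields $\text{dist}(T^{n_i}x,T^{n_i}x')\le \text{dist}(T^{n_i}x,x_0)+\text{dist}(x_0,T^{n_i}x')\to 0$. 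Hence $x$ and $x'$ are proximal, which is exactly the conclusion sought.

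The only delicate point is the passage to a common subsequence along which both $T^{n_i}x$ and $T^{n_i}x'$ converge; this is routine from the compactness of $X$, yet it is precisely what makes the argument run, as it lets me evaluate $\pi$ on the two limit points and invoke the uniqueness of the fibre over $y_0$. It is worth noting that minimality of $(X,T)$ itself is not needed here: only minimality of $(Y,S)$, to ensure that the orbit of $y$ visits every neighbourhood of $y_0$, together with the fact that $y_0$ has a single preimage.
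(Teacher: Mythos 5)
Your proof is correct and follows essentially the same route as the paper's: use minimality of $(Y,S)$ to send $\pi(x)=\pi(x')$ into the singleton fibre over $y_0$, then use continuity of $\pi$ and the intertwining relation to force both orbits to accumulate at the unique preimage $x_0$. Your explicit passage to a common convergent subsequence (and your remark that minimality of $(X,T)$ is never used) just makes precise what the paper leaves implicit; the argument is the same.
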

\begin{proof}
Let $y_{0}\in Y$ be a point with a unique preimage under $\pi$ and consider 
points $x,x' \in X$ such that $\pi(x) = \pi(x')$. By the minimality of $(Y,S)$, there exists a sequence $(n_{i})_{i \in \N}$ in $\mathbb{Z}$ such that 
$S^{n_{i}}(\pi(x))$ ($=S^{n_{i}}(\pi(x'))$) converges to $y_{0}$ as $i$ goes to infinity. By continuity of $\pi$ and since $T$ commutes with $S$, the sequences $(T^{n_{i}}x)_{i\in \N}$ and $(T^{n_{i}}x')_{i\in \N}$ converge to the same unique point in the preimage of $y_{0}$ for $\pi$. This shows that points $x$ and $x'$ are proximal.
\end{proof}

\subsection{Automorphism group}

An {\it automorphism} of the topological dynamical system $(X,T)$ is a homeomorphism $\phi$ of the space $X$ such that $\phi\circ T=T\circ \phi$.  
We denote by Aut$(X,T)$ the group of automorphisms of $(X,T)$.
The subgroup of Aut$(X,T)$ generated by $T$ is denoted by $\langle T \rangle$. 

We will need the following two simple facts.  

\begin{lem} \label{FreeAction}
Let $(X,T)$ be a minimal topological dynamical system. Then the action of {\rm Aut}$(X,T)$ on $X$ is free. That is, every nontrivial element in {\rm Aut}$(X,T)$ has no fixed points.
\end{lem}
\begin{proof}
Take $\phi\in {\rm Aut}(X,T)$ and $x\in X$ such that $\phi(x)=x$. Since $\phi$ commutes with $T$ and is continuous, by minimality we deduce that $\phi(y)=y$ for all $y\in X$. Thus $\phi$ is the identity map.
\end{proof}

\begin{lem}\label{lem:stab} Let $(X,T)$ be a topological dynamical system. For $x\in X$ and $\phi \in {\rm Aut}(X,T)$ we have, 
\begin{itemize}
\item if $x$ and $\phi(x)$ are asymptotic then $\phi$ restricted to $\omega (x)$  is the identity map; 
\item if $(X,T)$ is minimal then $x$ and $\phi(x)$ are proximal if and only if $\phi$ is the identity map.
\end{itemize}
\end{lem}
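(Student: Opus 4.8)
The plan is to prove the two assertions of Lemma~\ref{lem:stab} separately, exploiting the defining commutation relation $\phi\circ T=T\circ\phi$ together with the two relations (asymptoticity and proximality) introduced in the Preliminaries.

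For the first statement, suppose $x$ and $\phi(x)$ are asymptotic, i.e.\ $\mathrm{dist}(T^nx,T^n\phi(x))\to 0$ as $n\to+\infty$. Since $\phi$ commutes with $T$, we have $T^n\phi(x)=\phi(T^nx)$, so $\phi$ moves each point $T^nx$ (for large $n$) a small distance. My aim is to show $\phi$ fixes every point of $\omega(x)$. I would take an arbitrary $z\in\omega(x)$, write it as a limit $z=\lim_{i\to\infty}T^{k_i}x$ along some subsequence $k_i\to+\infty$, and compute $\phi(z)=\lim_i\phi(T^{k_i}x)=\lim_i T^{k_i}\phi(x)$ using continuity of $\phi$ and the commutation. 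The asymptoticity gives $\mathrm{dist}(T^{k_i}x,T^{k_i}\phi(x))\to 0$, so $\lim_i T^{k_i}\phi(x)=\lim_i T^{k_i}x=z$ as well. Hence $\phi(z)=z$, and since $z\in\omega(x)$ was arbitrary, $\phi$ restricted to $\omega(x)$ is the identity.

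For the second statement I must prove an equivalence under the minimality hypothesis. The direction ``$\phi=\mathrm{id}$ implies $x,\phi(x)$ proximal'' is immediate, since then $\phi(x)=x$ and any point is proximal to itself (take $n_i=0$). For the converse, suppose $x$ and $\phi(x)$ are proximal, so there is a sequence $(n_i)$ with $\mathrm{dist}(T^{n_i}x,T^{n_i}\phi(x))\to 0$. As above, $T^{n_i}\phi(x)=\phi(T^{n_i}x)$. By compactness I would pass to a subsequence along which $T^{n_i}x$ converges to some point $w\in X$; then by continuity $\phi(T^{n_i}x)\to\phi(w)$, while the proximality forces $\phi(T^{n_i}x)\to w$ as well, giving $\phi(w)=w$. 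At this point I invoke Lemma~\ref{FreeAction}: in a minimal system the action of $\mathrm{Aut}(X,T)$ is free, so a nontrivial automorphism has no fixed points; since $\phi$ fixes $w$, it must be the identity.

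The routine parts are the limit manipulations, which rely only on continuity and the commutation relation. The one point requiring care is the converse of the second statement, where the existence of a genuine fixed point must be extracted from the proximal sequence before Lemma~\ref{FreeAction} can be applied; the key observation making this work is that proximality is witnessed along the \emph{same} sequence $(n_i)$ for both orbits, so that passing to a convergent subsequence of $(T^{n_i}x)$ produces a single limit shared by $(T^{n_i}\phi(x))=(\phi(T^{n_i}x))$. I do not expect any serious obstacle; the main subtlety is simply to remember that the first statement concerns $\omega$-limit behavior under the one-sided (asymptotic) relation, whereas the second exploits the two-sided flexibility of proximality together with minimality.
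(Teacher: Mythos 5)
Your proposal is correct and follows essentially the same argument as the paper: for both parts you extract a convergent subsequence along which the asymptotic/proximal distances vanish, use continuity of $\phi$ and the commutation $\phi\circ T=T\circ\phi$ to produce a fixed point of $\phi$, and in the minimal case conclude via Lemma~\ref{FreeAction}. The paper's proof is merely more terse, leaving implicit the limit manipulations you spell out.
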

\begin{proof}
In the first part, we assume $\lim_{n \to +\infty} \text{dist}(T^{n}x, T^{n}\phi(x)) =0$. For any $y \in \omega(x)$ consider a sequence $(n_{i})_{i\in \N}$ in $\N$ such that $T^{n_{i}}x$ converges to $y$. We get that $\phi(y)= y$, which proves the desired result.  

The proof of the nontrivial direction of the second part is similar. By definition, there exists a sequence $(n_i)_{i\in \N}$ in $\Z$ such that $\lim_{i\to +\infty}\text{dist}(T^{n_i}x,T^{n_i}\phi(x))=0$. We can assume that $T^{n_i}x$ converges to some $y\in X$. Therefore $\phi(y)=y$. By Lemma \ref{FreeAction} $\phi$ is the identity map.
\end{proof}

Let $\pi\colon(X,T)\to (Y,S)$ be a factor map between the minimal systems $(X,T)$ and $(Y,S)$, and let $\phi$ be an automorphism of $(X,T)$. We say that  $\pi$ is {\em compatible} with $\phi$ if $\pi(x)=\pi(x')$ implies $\pi(\phi(x))=\pi(\phi(x'))$ for every $x,x' \in X$.
We say that $\pi$ is {\em compatible} with {\rm Aut}$(X,T)$ if $\pi$ is compatible with every $\phi\in {\rm Aut}(X,T)$.

If the factor map $\pi\colon (X,T) \to (Y,S)$ is compatible with Aut$(X,T)$ we can define the projection $\widehat{\pi}(\phi)\in {\rm Aut}(Y,S)$ by the equation $\widehat{\pi}(\phi)(\pi(x))=\pi(\phi (x))$ for all $x\in X$. We have that 
$\widehat{\pi}\colon {\rm Aut}(X,T)\to {\rm Aut}(Y,S)$ is a group morphism. 

Note that $\widehat{\pi}$ might not be onto or injective. Indeed, 
for an irrational rotation of the circle, the group of automorphisms is the whole circle but the group of automorphisms of its Sturmian extension is 
$\Z$ \cite{Ol13}. We will show in Lemma \ref{NilCompatible} that this factor map is compatible, hence $\widehat{\pi}$ is well defined but is not onto. 
On the other hand, the map $\widehat{\pi}$ associated to the projection onto the trivial system cannot be injective. 

In the case of a compatible proximal extension between minimal systems we have:

\begin{lem} \label{ProximalExtension}
Let $\pi\colon (X,T)\to (Y,S)$ be a proximal extension between minimal systems and suppose that $\pi$ is compatible with {\rm Aut}$(X,T)$. Then $\widehat{\pi}\colon{\rm Aut}(X,T)\to {\rm Aut}(Y,S)$ is injective.  
\end{lem}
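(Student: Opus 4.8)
The plan is to exploit that $\widehat{\pi}$ is a group morphism, so its injectivity reduces to showing that its kernel is trivial. Accordingly, I would take an automorphism $\phi\in{\rm Aut}(X,T)$ lying in the kernel, meaning $\widehat{\pi}(\phi)=\id_{Y}$, and aim to conclude that $\phi=\id_{X}$.

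First I would unravel the defining equation of the projection $\widehat{\pi}$. Since $\widehat{\pi}(\phi)(\pi(x))=\pi(\phi(x))$ holds for every $x\in X$, and $\widehat{\pi}(\phi)$ is the identity of $Y$, this immediately gives $\pi(\phi(x))=\pi(x)$ for all $x\in X$. In other words, for each point $x$, the points $x$ and $\phi(x)$ lie in the same fiber of $\pi$.

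The decisive step is then to invoke the hypothesis that $\pi$ is a proximal extension: having $\pi(x)=\pi(\phi(x))$ forces $x$ and $\phi(x)$ to be proximal, for every $x\in X$. At this point I would apply the second part of Lemma \ref{lem:stab}, which---because $(X,T)$ is minimal---states that $x$ and $\phi(x)$ being proximal is equivalent to $\phi$ being the identity map. This yields $\phi=\id_{X}$, so the kernel of $\widehat{\pi}$ is trivial and $\widehat{\pi}$ is injective.

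There is no serious obstacle here; the argument is a direct chaining of the relevant definitions. The only point requiring care is the logical direction of the equivalence in Lemma \ref{lem:stab}: one uses that proximality of $x$ and $\phi(x)$ (for one, hence every, $x$) implies triviality of $\phi$, which is precisely the nontrivial implication proved there via the freeness of the ${\rm Aut}$-action (Lemma \ref{FreeAction}). It is also worth noting that the compatibility hypothesis is exactly what makes $\widehat{\pi}$ well defined as a morphism in the first place, so the statement is not vacuous.
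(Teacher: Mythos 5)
Your proposal is correct and follows essentially the same route as the paper's proof: reduce injectivity of the group morphism $\widehat{\pi}$ to triviality of its kernel, use the defining relation $\pi(\phi(x))=\widehat{\pi}(\phi)(\pi(x))=\pi(x)$, invoke proximality of the extension, and conclude via the second part of Lemma \ref{lem:stab}. Your remarks on the logical direction of that lemma and on the role of compatibility are accurate but add nothing beyond the paper's argument.
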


\begin{proof}
Let $\phi \in {\rm Aut}(X,T)$ be an automorphism such that 
$\widehat{\pi}(\phi)$ is the identity map of $Y$. It suffices to prove that $\phi$ is the identity map of $X$. For $x\in X$ we have that $\pi(\phi (x))=\widehat{\pi}(\phi)(\pi(x))=\pi(x)$. Since $\pi$ is proximal, then $x$ and $\phi(x)$ are proximal points. From Lemma \ref{lem:stab} we conclude that $\phi$ is the identity map.
\end{proof}

\subsection{Subshifts}\label{subsec:subshifts}

Let ${\mathcal A}$ be a finite set that we will call {\it alphabet}. Elements in ${\mathcal A}$ are called {\em letters} or {\em symbols}. The set of finite sequences or {\it words} of length $\ell\in \N$ with letters in $\mathcal A$ is denoted by ${\mathcal A}^\ell$, the set of onesided sequences $(x_n)_{n\in \mathbb{N}}$  in ${\mathcal A}$ is denoted  by ${\mathcal A}^{\mathbb N}$
and the set of twosided sequences $(x_n)_{n\in \mathbb{Z}}$  in ${\mathcal A}$ is denoted by ${\mathcal A}^{\mathbb Z}$. Also, a word $w= w_1 \ldots w_{\ell} \in 
{\mathcal A}^\ell$ can be seen as an element of the free monoid ${\mathcal A}^*$ endowed with the operation of concatenation. The {\em length} of $w$ is denoted by $|w|=\ell$.

The {\em shift map} $\sigma \colon {\mathcal A}^{\mathbb Z} \to {\mathcal A}^{\mathbb Z}$ is defined by $\sigma ((x_n)_{n\in \mathbb{Z}}) = (x_{n+1})_{n\in \mathbb{Z}}$. To simplify notations we denote the shift map by $\sigma$ independently of the alphabet, the alphabet will be clear from the context.

A {\em subshift} is a topological dynamical system $(X,\sigma )$ where $X$ is a closed $\sigma$-invariant subset of ${\mathcal A}^{\mathbb Z}$ (we consider the product topology in ${\mathcal A}^{\mathbb Z}$). For convenience, when we state general results about topological dynamical systems we use the notation $(X,T)$ and to state specific results about subshifts we use $(X,\sigma)$. 

Let $(X,\sigma )$ be a subshift. The {\em language} of $(X,\sigma)$ is the set 
${\mathcal L}(X)$ containing all words $w \in {\mathcal A}^*$ such that 
$w=x_m \ldots x_{m+\ell-1}$ for some $(x_n)_{n\in\Z} \in X$, $m\in \Z$ and $\ell \in \N$. We say that $w$ {\em appears} or {\it occurs} in the sequence $(x_n)_{n\in\Z} \in X$. We denote by ${\mathcal L}_\ell (X)$ the set of words of length $\ell$ in ${\mathcal L} (X)$. 

The map $p_X \colon {\mathbb N} \to {\mathbb N}$ defined by $p_X (\ell) = \sharp {\mathcal L}_\ell (X)$ is called the {\em complexity function} of $(X,\sigma )$.

We recall some notations from complexity theory. Given  two functions $f,g\colon \N \to \N\setminus\{0\}$ we write $f(\ell) = O(g(\ell))$ if there exists a positive constant $K$ such that $f(\ell) \le Kg(\ell)$ for every large enough $\ell$. We also  write $f(\ell) = \Theta(g(\ell))$ if $f(\ell) =O(g(\ell))$ and $ g(\ell) =O(f(\ell))$. Finally, $f(\ell) = \Omega_+(g(\ell))$ if $\limsup_{\ell \to +\infty} f(\ell)/g(\ell) >0$. 

We adopt the following terminology. 
We say that the complexity of the subshift:
\begin{itemize}
\item is {\em polynomial} if there exists an integer $d\geq 1$ such that $p_X(\ell)= \Theta(\ell^d)$; when  $d=1$ we say the complexity is {\em linear} and when $d=2$ the subshift has {\it quadratic} complexity;
\item has {\em at most polynomial growth rate}  if   there exists an integer $d \ge 1$ such that $p_{X}(\ell)= O(\ell^d)$;
\item is {\em superlinear} if $\displaystyle \lim_{\ell\to +\infty} p_{X}(\ell)/\ell = +\infty$;   
\item is {\em non-superlinear} if $\displaystyle \liminf_{\ell\to +\infty} p_{X}(\ell)/\ell < +\infty$;   
\item is {\em subquadratic} if $\displaystyle \lim_{\ell\to +\infty} p_{X}(\ell)/\ell^{2} = 0$;
\item is {\em superpolynomial along a subsequence} if $\displaystyle\limsup_{\ell\to +\infty} p_{X}(\ell)/q(\ell) = \pm\infty$ for every polynomial $q$;
 \item is {\em subexponential} if  $\displaystyle \lim_{\ell\to +\infty} p_{X}(\ell)/\alpha^{\ell} = 0$ for all $\alpha>1$.

\end{itemize}

In the proof of Theorem \ref{prop:fini} we will need the following well known notion that is intimately related to the concept of asymptotic pairs.
A word $w\in {\mathcal L} (X)$ is said to be {\em left special} if there exist at least two distinct letters $a$ and $b$ such that $aw$ and $bw$ belong to ${\mathcal L}(X)$. In the same way we define {\em right special} words.

Let $\phi \colon (X,\sigma) \to (Y,\sigma)$ be a  factor map between subshifts.
By the Curtis-Hedlund-Lyndon Theorem, $\phi$ is determined by a {\it local map} 
$\hat\phi\colon {\mathcal A}^{2{\mathbf r}+1} \to {\mathcal A}$ in such a way that $\phi(x)_n=\hat\phi(x_{n-{\mathbf r}}\ldots x_n\ldots x_{n+{\mathbf r}})$ for all $n \in \Z$ and $x \in X$, where ${\mathbf r}\in \N$ is called a {\em radius} of $\phi$. The local map $\hat{\phi}$ naturally extends to the set of words of length at least $2{\bf r} +1$, and we also denote this map by $\hat{\phi}$. 

\subsection{Substitutions and substitutive subshifts}

We recall some basic definitions about substitutions and the induced subshifts. For more details see \cite{Que}. 

Let $\cA$ be a finite alphabet. A {\it substitution} is a map 
$\tau \colon \cA \to \cA^*$ which associates to each letter $a \in \cA$ a word $\tau(a)$ of some length in $\cA^*$.  
The substitution $\tau$ can be applied to a word in $\cA^*$ and onesided or twosided infinite sequences in $\cA$ in the obvious way by concatenating (in the case of a twosided sequence we apply $\tau$ to positive and negative coordinates separately and we concatenate at coordinate zero the results). Then substitutions can be iterated or composed $n$ times for any integer $n\geq 1$. Denote this composition by $\tau^n$. To avoid trivial cases we will always assume in the definition of a substitution that the length of $\tau^n(a)$ grows to infinity for every letter $a \in \cA$.

The substitution $\tau \colon \cA \to \cA^*$ is {\em primitive} if for some integer $p\ge 1$ and every letter $a \in \cA$ the word $\tau^{p}(a)$ contains all the letters of the alphabet.

The substitution $\tau \colon \cA \to \cA^*$ is said to be of {\it constant length} $\ell>0$ if $|\tau(a)|=\ell$ for each $a \in \cA$. The length of a substitution is also denoted by $|\tau|$. The constant-length substitution $\tau$  
is {\em bijective} if $\tau(a)_{i} \neq \tau(b)_{i}$
for all $a, b \in \cA$ with $a \not = b$ and all coordinates $1 \le  i \le |\tau|$.

The subshift induced by a substitution $\tau \colon \cA \to \cA^*$ is denoted by 
$(X_{\tau},\sigma)$, where $X_{\tau}$ is the set 
$$
\{x\in \cA^\Z ; \textrm{ each finite word of } x \textrm{ is a subword of } \tau^n(a) \text{ for some }  n \ge 1 \text{ and } a\in \cA\}.
$$ 
We also say that $(X_{\tau},\sigma)$ is a {\it substitutive subshift}.
For constant-length substitutions it is well known that $(X_{\tau}, \sigma)$ is minimal if and only if the substitution $\tau$ is {\em primitive}. The substitution $\tau$ is said to be {\em aperiodic} if $X_\tau$ is an infinite set.

\subsection{Equicontinuous systems}
A topological dynamical system $(X,T)$ is {\it equicontinuous} if the family of transformations $\{T^n; n \in \Z\}$ is equicontinuous. 
Let $(X,T)$ be an equicontinuous minimal system. It is well known that the closure of the group $\langle T \rangle$ in the set of homeomorphisms of $X$ for the uniform topology is  a compact abelian group acting transitively on $X$  \cite{Aus}. When $X$ is a Cantor set the dynamical system $(X,T)$ is called an {\em odometer}. 

\subsection{Nilsystems}\label{subsec:nilystems}

The following well known class of systems will allow us to compute the automorphism group of some interesting subshifts of polynomial complexity. 

Let $G$ be a group. The {\em commutator} of $g,h\in G$ is defined to be $[g,h]= ghg^{-1}h^{-1}$ and for $E, F\subset  G$, we let $[E,F]$ denote the group spanned by $\{[e,f]\colon e\in E, f\in F\}$. The {\em commutator subgroups} $G_j$ of $G$ are defined inductively, with $G_1 = G$ and for integers $j \geq 1$, we have $G_{j+1} = [G, G_j]$. For an integer $d\geq 1$,  if $G_{d+1}$ is the trivial subgroup then $G$ is said to be {\em $d$-step nilpotent}. Notice that a subgroup of a $d$-step nilpotent group is also $d$-step nilpotent and any abelian group is $1$-step nilpotent.

Let $d\geq 1$ be an integer, $G$ be a $d$-step nilpotent Lie group and $\Gamma$ be a discrete cocompact subgroup of $G$. Then the compact nilmanifold $X=G/\Gamma$ is a {\em $d$-step nilmanifold}. The group $G$ acts on $X$ by
left translations and we write this action by $(g, x)\mapsto gx$. Let $T\colon X\to X$ be the transformation $x\mapsto\tau x$ for some fixed element $\tau\in G$. 
Then $(X,T)$ is a {\em $d$-step nilsystem}. Thus a $1$-step nilsystem is exactly  a translation on a compact abelian group. Nilsystems are distal systems, meaning that there are no proximal pairs. Moreover, minimal nilsystems are uniquely ergodic. 
See \cite{AGH} and \cite{L1} for general references.

An important subclass of nilsystems are {\em affine nilsystems}. Let $d\geq 1$ be an integer and consider a $d\times d$ integer matrix $A$ such that $(A-Id)^d=0$ (such a matrix is called {\em unipotent}) and a vector $\vec{\alpha}\in \mathbb{T}^d$. Define
the transformation $T\colon \mathbb{T}^d\to \mathbb{T}^d$ by $x\mapsto Ax+\vec{\alpha}$ (operations are considered  $\mod \Z^d$). 
Since $A$ is unipotent, one can prove that the group $G$ 
spanned by $A$ and all the translations of $\mathbb{T}^d$ 
is a $d$-step nilpotent Lie group. The stabilizer of $0$ is the subgroup $\Gamma$ spanned by $A$. Thus we can identify $\mathbb{T}^d$ with $G/\Gamma$. The topological dynamical system $(\mathbb{T}^d,T)=(G/\Gamma,T)$ is called a $d$-{\em step affine nilsystem}. This system is minimal if and only if the projection of 
$\vec{\alpha}$ onto $\mathbb{T}^d/ker(A-Id)$ defines a minimal rotation \cite{Pa}.

\section{Automorphism groups of subshifts with non-superlinear  complexity}
\label{section:affine}

Now we shall give a positive answer to the question raised in \cite{ST}: is it true that the group of automorphisms of a linearly recurrent system is virtually isomorphic to $\Z$ ? We recall that a group $G$ {\em virtually} satisfies a property \texttt{P} ({\em e.g.}, nilpotent, solvable, isomorphic to a given group) if there is a  finite index subgroup $H \subseteq G$ satisfying property \texttt{P}.

It is known that the complexity functions of linearly recurrent subshifts have at most a linear growth rate \cite{D}. We answer the former question by considering the much larger class of minimal subshifts with non-superlinear complexity. The main tool for answering this question is a detailed study of the asymptotic relation. More precisely the so-called {\it asymptotic components} introduced below. 
This notion is related to the {\it asymptotic composants} introduced by M. Barge and B. Diamond in \cite{BD1}. The chief result from this work that we also need here is that there are finitely many asymptotic composants. Notice that in the substitutive case the asymptotic composants can be  described combinatorially \cite{BD1}.

Let $(X,T)$ be a topological dynamical system. Given $x,y \in X$
we say that orbits ${\rm Orb}_T(x)$ and ${\rm Orb}_T(y)$ are 
{\em asymptotic} if there exist points $x' \in {\rm Orb}_{T}(x)$ and $y' \in {\rm Orb}_{T}(y)$ that are asymptotic. This condition is equivalent to saying that $y$ is asymptotic to some $T^nx$ or vice versa. Then for each  
$x' \in {\rm Orb}_{T}(x)$, there is a  point $y' \in {\rm Orb}_{T}(y)$ asymptotic to $x'$.  We denote this relation by ${\rm Orb}_T(x) \ {\mathcal AS} \ {\rm Orb}_T(y)$. It follows that ${\mathcal AS}$ defines an equivalence relation on the collection of orbits. When an ${\mathcal AS}$-equivalence class is not reduced to a single element we call it an {\em asymptotic component}. The equivalence class for  ${\mathcal AS}$ of the orbit of $x \in X$ is denoted by $\mathcal{AS}_{[x]}$ and the set of all asymptotic components by $\mathcal{AS}$.

It is clear from the definition that the asymptotic relation is preserved by automorphisms of $(X,T)$: if $x, y \in X$ are asymptotic then $\phi(x), \phi(y)$ are asymptotic for every $\phi \in {\rm Aut}(X,T)$. It is also not difficult to check that the orbits ${\rm Orb}_T(\phi(x))$ and ${\rm Orb}_T(\phi(y))$ are  asymptotic whenever ${\rm Orb}_T(x)$ and ${\rm Orb}_T(y)$ are asymptotic. 
Then, the image of an asymptotic component under $\phi \in {\rm Aut}(X,T)$ is an asymptotic component. These properties prove that every automorphism $\phi \in {\rm Aut}(X,T)$ induces a permutation $j(\phi)$ of the set of asymptotic components $\mathcal{AS}$. Therefore, the following group morphism is well defined: 
\begin{eqnarray}
\label{defi:j}
j\colon {\rm Aut}(X,T) & \to & {\rm Per} \mathcal{AS}\\
 \phi & \mapsto & \left( \mathcal{AS}_{[x]} \mapsto \mathcal{AS}_{[\phi(x)]}\right) \nonumber,
\end{eqnarray}
where ${\rm Per} \mathcal{AS}$ denotes the set of permutations of $\mathcal{AS}$. 

Now we can state the main result of this section. 
\begin{theo}
\label{prop:fini}
Let $(X,\sigma)$ be a subshift such that $\displaystyle \liminf_{n\to + \infty } \frac{p_X(n)}{n} < +\infty$. Assume there exists a point $x_{0} \in X$ with 
$\omega(x_0) =X$ that is asymptotic to a different point. Then,  
\begin{enumerate}
\item  {\rm Aut}$(X,\sigma) /\langle  \sigma \rangle$ is finite.
\item If $(X,\sigma)$ is minimal, the quotient group ${\rm Aut}(X,\sigma)/ \langle  \sigma \rangle$ is isomorphic to a finite  subgroup of permutations without fixed points and  $\sharp ({\rm Aut}(X,\sigma)/ \langle \sigma \rangle)$ divides the number of asymptotic components of $(X,\sigma)$.
\end{enumerate}
\end{theo}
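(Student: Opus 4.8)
The plan is to study the action of ${\rm Aut}(X,\sigma)$ on the asymptotic components via the morphism $j$ of \eqref{defi:j}, reducing everything to two facts: that the set $\mathcal{AS}$ of asymptotic components is finite, and that $\ker j=\langle\sigma\rangle$. Granting these, $\mathrm{Aut}(X,\sigma)/\langle\sigma\rangle=\mathrm{Aut}(X,\sigma)/\ker j$ embeds into the finite group $\mathrm{Per}\,\mathcal{AS}$, which gives (1); a small sharpening of the kernel computation then yields the freeness and divisibility in (2). The only place the complexity hypothesis enters is the finiteness of $\mathcal{AS}$, which I expect to be the main obstacle.

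First I would prove that $\mathcal{AS}$ is finite. The key observation is that a forward-asymptotic pair $x\neq y$ agrees on a right tail: if $m$ is the last coordinate where they differ, then for every $k$ the word $x_{m+1}\cdots x_{m+k}$ is left special, being preceded both by $x_m$ and by $y_m\neq x_m$. Hence, after shifting so that $m=-1$, the common tail $(x_i)_{i\geq 0}$ is an infinite \emph{left special ray}, a right-infinite sequence all of whose prefixes are left special. Since prefixes of left special words are again left special, the left special words form a tree under the prefix order whose infinite branches are exactly these rays. Writing $b_n$ for the number of left special words of length $n$, the classical identity $p_X(n+1)-p_X(n)=\sum_{w\in\mathcal{L}_n(X)}(\ell(w)-1)$, where $\ell(w)$ is the number of left extensions of $w$, gives $b_n\leq p_X(n+1)-p_X(n)$. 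If the tree had infinitely many infinite branches, then $b_n\to\infty$, forcing $p_X(n)\geq\sum_{j<n}b_j$ and hence $p_X(n)/n\to\infty$, contradicting $\liminf_n p_X(n)/n<\infty$; so there are finitely many rays. Finally I would exhibit a surjection from rays onto $\mathcal{AS}$: all points sharing a fixed tail $r$ from coordinate $0$ agree on $[0,+\infty)$, hence are pairwise forward asymptotic and lie in a single component, while every component arises this way after normalization. Thus $\mathcal{AS}$ is finite.

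Next I would compute $\ker j$. The inclusion $\langle\sigma\rangle\subseteq\ker j$ is clear, since $\sigma$ fixes every orbit and hence every component. For the reverse, take $\phi\in\ker j$. The hypothesis provides $x_0$ with $\omega(x_0)=X$ asymptotic to a different point $y_0$; one checks $y_0\notin\mathrm{Orb}(x_0)$ (otherwise $x_0$ would be eventually right-periodic and, as $\omega(x_0)=X$, the system would reduce to a single periodic orbit, which has no nontrivial asymptotic pair), so $\mathcal{AS}_{[x_0]}$ is a genuine component. As $\phi$ fixes it, $\mathrm{Orb}(\phi(x_0))\,\mathcal{AS}\,\mathrm{Orb}(x_0)$, whence $\phi(x_0)$ is asymptotic to $\sigma^n x_0$ for some $n\in\Z$. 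Setting $\psi=\sigma^{-n}\phi$, the points $\psi(x_0)$ and $x_0$ are asymptotic, so by the first part of Lemma \ref{lem:stab} the automorphism $\psi$ is the identity on $\omega(x_0)=X$; hence $\phi=\sigma^n\in\langle\sigma\rangle$. This proves $\ker j=\langle\sigma\rangle$ and establishes (1).

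For (2), minimality gives $\omega(x)=X$ for every $x$, so the previous paragraph applies to an arbitrary component: if $j(\phi)$ fixes some $\mathcal{AS}_{[x]}$, then $\sigma^{-n}\phi$ fixes $\omega(x)=X$ pointwise and $\phi\in\langle\sigma\rangle$, i.e. $j(\phi)=\mathrm{id}$. Thus every nontrivial element of $\mathrm{Im}(j)\cong\mathrm{Aut}(X,\sigma)/\langle\sigma\rangle$ acts on $\mathcal{AS}$ without fixed points, so this quotient is a finite group of fixed-point-free permutations of $\mathcal{AS}$. Since a finite group acting freely on a nonempty finite set has all orbits of size equal to its order, $\sharp(\mathrm{Aut}(X,\sigma)/\langle\sigma\rangle)$ divides $\sharp\,\mathcal{AS}$, the number of asymptotic components.
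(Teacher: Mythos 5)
Your proof is correct, and it follows the paper's overall architecture: the morphism $j$ of \eqref{defi:j}, the identification $\ker j=\langle\sigma\rangle$ via Lemma \ref{lem:stab} (this is exactly the content of the paper's Corollary \ref{cor:suiteexact}, and you even verify a point left implicit there, namely that $y_0\notin{\rm Orb}_\sigma(x_0)$, so that $\mathcal{AS}_{[x_0]}$ is a genuine component), and the free-action, orbit-counting argument for statement (2). The one place you genuinely diverge is the proof that $\mathcal{AS}$ is finite, which is the paper's Lemma \ref{lem:fini} and, as you observe, the only step where the complexity hypothesis enters. The paper extracts from the $\liminf$ a constant $\kappa$ and a subsequence $(n_i)_{i}$ along which $p_X(n_i+1)-p_X(n_i)\le\kappa$, so that there are at most $\kappa$ left special words of length $n_i$; it then takes $\kappa+1$ nontrivial asymptotic pairs and applies the pigeonhole principle to the left special words they generate at their merging coordinates, concluding that two of the pairs define the same component, whence $\sharp\mathcal{AS}\le\kappa$. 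You instead organize the left special words into a tree under the prefix order, inject $\mathcal{AS}$ into its set of infinite branches, and rule out infinitely many branches globally: infinitely many branches would force $b_n\to\infty$, and the telescoping inequality $p_X(n)\ge\sum_{j<n}b_j$ would then give $\lim_{n} p_X(n)/n=+\infty$, contradicting the $\liminf$ hypothesis. The two arguments rest on the same underlying facts (asymptotic pairs produce left special words, and the number $b_n$ of such words of length $n$ is at most $p_X(n+1)-p_X(n)$), but your branch-counting version replaces the paper's subsequence-plus-pigeonhole step with a cleaner structural statement (components inject into left special rays), at the cost of being purely qualitative where the paper obtains the explicit bound $\sharp\mathcal{AS}\le\kappa$. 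Both routes are complete; I see no gap in yours.
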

Notice that the condition on the point $x_0$ is automatically satisfied when the dynamical system $(X,\sigma)$ is minimal. In this case we obtain Theorem 1.4  in \cite{CK2}. 

The condition on the growth rate of the complexity function is satisfied by primitive substitutive subshifts, by linearly recurrent systems and many other subshifts.  Interestingly, this condition is compatible with $\limsup_{n\to +\infty} p_X(n)/n = +\infty$. In Section \ref{sec:example}, we construct a minimal subshift which exhibits superpolynomial complexity along a subsequence even though it satisfies the complexity hypothesis of Theorem \ref{prop:fini}.

We remark that Statement (2) of Theorem \ref{prop:fini} does not impose any  restriction on the finite groups obtained as quotients ${\rm Aut}(X,\sigma)/ \langle \sigma \rangle$. Indeed, given a finite group $G$, it acts on itself by left multiplication: $L_{g} (h) = g  h$ for $g,h\in G$. 
Then the map $L_{g}$ defines a permutation of the finite set $G$ without any fixed points.  
So $G$ can be seen as a subgroup of the permutation group of $\sharp G$ elements, which satisfies Statement (2) of the theorem.
In Section \ref{sec:characterisation}, we show that for every finite group $G$ there exists a subshift $(X, \sigma)$ such that  
${\rm Aut}(X,\sigma)/ \langle \sigma \rangle$ is isomorphic to $G$ by  giving a characterization of the automorphisms of a specific family of subshifts induced by substitutions. As mentioned in the introduction, we shall give a direct proof of this result here, but it can also be deduced by combining results in \cite{HP} and \cite{LM}.

Finally, we note that Statement (2) of Theorem \ref{prop:fini} enables us to perform  explicit computations of automorphism groups in some easy cases. 
The first example of this comes from Sturmian subshifts (see \cite{kur} for a detailed exposition of these systems). It is well known that these systems have unique asymptotic components, so each automorphism is a power of the shift map. A slightly more general case is when the number of asymptotic components is a prime number $p$ ({\em e.g.}, $p=2$ for the Thue-Morse subshift). In this case the group Aut$(X,\sigma)/ \langle \sigma \rangle$ is a subgroup of $\Z/p \Z$, either the trivial one or $\Z/p \Z$ itself. In particular, since  the Thue-Morse subshift  admits an automorphism  which is not the power of the shift map (the one that flips the two letters of the alphabet), then in this case the quotient is isomorphic to $\Z/2 \Z$. 

We point out that the hypothesis on the complexity in Theorem \ref{prop:fini} is only used to prove that there are finitely many asymptotic components. So any subshift where this last property holds is a good candidate for having an automorphism group that is virtually $\Z$. This is the case of minimal systems, but in general this is not a theorem, and we need to check the structure of asymptotic components in greater detail. In fact, the structure of asymptotic components plays a crucial role in the computation of the automorphism groups. This motivates the second example presented in Section \ref{sec:example}. 

\subsection{Proof of Theorem \ref{prop:fini}}\label{sec:proofthm1}
The following lemma is a key observation that allows the growth rate of the complexity function of a subshift to be related to its asymptotic components. The proof follows some classical ideas from \cite{Que}. 

\begin{lem}\label{lem:fini} Let $(X,\sigma)$ be a subshift. 
If $\liminf_{n\to + \infty } \frac{p_X(n)}{n} < +\infty$, then the number of asymptotic components is finite. In particular, any subshift of linear complexity has a finite number of asymptotic components.  
\end{lem}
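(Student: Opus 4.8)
The plan is to translate the asymptotic relation into the combinatorics of special words and then bound the number of special ``rays'' by means of the complexity hypothesis. Recall that two distinct points $x,y\in X$ are asymptotic precisely when they agree on a right tail; since $x\neq y$ there is a largest coordinate at which they disagree, so after applying a suitable power of $\sigma$ I may assume $x_0\neq y_0$ while $x_i=y_i$ for every $i\geq 1$. Writing $w=x_1\cdots x_n$, both $x_0w$ and $y_0w$ lie in $\mathcal{L}(X)$ with $x_0\neq y_0$, so every prefix $x_1\cdots x_n$ of the common right tail is a \emph{left special} word. Thus each asymptotic pair produces a right-infinite sequence all of whose prefixes are left special; I will call such a sequence a left-special ray. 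On the complexity side, counting the words of length $n+1$ by their length-$n$ suffix yields $p_X(n+1)-p_X(n)=\sum_{v\in\mathcal{L}_n(X)}(e^-(v)-1)$, where $e^-(v)$ denotes the number of admissible left extensions of $v$; hence the number $L(n)$ of left-special words of length $n$ satisfies $L(n)\le p_X(n+1)-p_X(n)$.

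Next I would extract the relevant consequence of the hypothesis. Setting $s(n)=p_X(n+1)-p_X(n)\geq 0$ and using $p_X(n)=p_X(1)+\sum_{k=1}^{n-1}s(k)$, a direct estimate gives $\liminf_{n\to+\infty}p_X(n)/n\ \ge\ \liminf_{n\to+\infty} s(n)$, so the assumption forces $\liminf_n s(n)<+\infty$. Since $s$ is integer valued, there is a constant $M$ and infinitely many lengths $n$ with $L(n)\le s(n)\le M$. In other words, the number of left-special words stays bounded by $M$ along an infinite sequence of scales.

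The core step, and the part I expect to be the main obstacle, is to bound the number of left-special rays, and hence of asymptotic components, by $M$. Here I would argue as follows. Given finitely many pairwise distinct left-special rays, they diverge from one another at finitely many positions; choosing a good scale $n$ (one with $L(n)\le M$) larger than all of these positions makes their length-$n$ prefixes pairwise distinct left-special words, so there are at most $L(n)\le M$ of them. As this bound is uniform over finite subcollections, the total number of left-special rays is at most $M$. Finally I would define a map sending each nontrivial asymptotic component to the right tail, which is a left-special ray, of one of its normalized asymptotic pairs. This map is injective: if two components were sent to the same ray $r$, then the ``first coordinates'' of the corresponding normalized pairs would both possess right tail $r$, hence be asymptotic, placing the orbits of the two components in a single $\mathcal{AS}$-class. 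Combining these facts, the number of asymptotic components is at most $M<+\infty$; the linear-complexity case is immediate since then $p_X(n)=\Theta(n)$ yields $\liminf_n p_X(n)/n<+\infty$. The delicate points to get right are the ``last disagreement'' normalization, which is what guarantees that a forward-asymptotic pair produces a genuine left-special ray, and the uniformity in the ray-counting argument, where one must select the good scale beyond all the relevant pairwise divergences.
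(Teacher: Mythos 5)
Your proof is correct and follows essentially the same route as the paper's: the hypothesis $\liminf p_X(n)/n<+\infty$ forces $p_X(n+1)-p_X(n)$ to stay bounded along a subsequence of scales, hence the number of left special words is bounded along those scales, and each asymptotic pair, normalized at its last disagreement, produces left special words of every length, so a counting argument bounds the number of asymptotic components. The only cosmetic difference is that you package the final count as an injection of components into the finite set of left-special rays (choosing a good scale beyond all pairwise divergences), whereas the paper runs a double pigeonhole over pairs and scales; the underlying mechanism is identical.
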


\begin{proof} We observe that the last statement follows from Lemma V.22 in  \cite{Que}. Here we extend this result to subshifts whose complexity functions are non-superlinear. 

We claim that there exists a constant $\kappa$ and an increasing sequence $(n_{i})_{i\in \N}$ in $\N$ such that $p_X(n_{i}+1) -p_X(n_{i}) \le \kappa$. If not, for every $A>0$ and for every large enough  integer $n$ we have $p_X(n+1) - p_X(n)  \ge A$. It follows that for all large enough integers $m < n$, $p_X(n)  - p_X(m)  = \sum_{i=m}^{n-1} p_X(i+1)-p_X(i) \ge (n-m) A$.  From here we get that $\liminf_{n\to +\infty} \frac{p_X(n)}{n} \ge A$. This contradicts our hypothesis since $A$ is arbitrary and the claim follows. 

Fix $\kappa$ and an increasing sequence $(n_{i})_{i\in \N}$ in $\N$ as above.   
Hence, the number of left special words of length $n_{i}$ of the subshift is bounded by $\kappa$ (see Section \ref{subsec:subshifts} to recall the definition). 

Let $\{x_{0}, y_{0}\}, \ldots, \{x_{\kappa}, y_{\kappa}\}$ denote nontrivial asymptotic pairs. Clearly, each pair induces a pair of asymptotic orbits. 
Since $X$ is a subshift, for each $j \in \{0, \ldots, \kappa\}$ there exists $\ell_j \in \Z$ such that all coordinates of $x_j$ and $y_j$ larger than or equal to 
$\ell_j$ coincide whereas the $(\ell_j-1)^{\rm th}$ coordinates  are different. Then, for each $i \in \N$, the word of length $n_{i}$ starting at coordinate $\ell_j$ in both points $x_j$ and $y_j$ is a left special word. Since we have proved that the number of left special words of length $n_{i}$ is bounded by $\kappa$, we have that the special words associated to two different asymptotic pairs in our list coincide. But this fact holds for every $i \in \N$ and hence the pigeonhole principle implies that two asymptotic pairs in the list must share infinitely many of their left special words. Thus, the associated pairs of asymptotic orbits are equivalent. This proves that there are at most $\kappa$ asymptotic components and the result follows.   
\end{proof}

A second ingredient needed for proving Theorem \ref{prop:fini} is the following corollary of Lemma \ref{lem:stab}. 

\begin{cor}\label{cor:suiteexact}  
Let $(X,T)$ be a topological dynamical system. Assume there exists a point $x_{0} \in X$ with $\omega(x_0) =X$ that is asymptotic to a different point.  We have the following exact sequence,
$$
\xymatrix{
\{1\} \ar[r] & \langle T \rangle   \ar[r]^-{{\rm Id}} & {\rm Aut}(X,T) \ar[r]^-j  & {\rm Per} \mathcal{AS} ,
} 
$$
where $j$ was defined in \eqref{defi:j}. More precisely, for every automorphism $\phi \in {\rm Aut}(X,T)$, the permutation $j(\phi)$ fixes the asymptotic component ${\mathcal AS}_{[x_{0}]}$   if and only if $\phi$ is a power of 
$T$.
\end{cor}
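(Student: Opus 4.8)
The plan is to establish the two directions of the ``if and only if'' statement, from which exactness of the sequence follows at once: exactness at $\langle T \rangle$ is simply injectivity of the subgroup inclusion, and exactness at ${\rm Aut}(X,T)$ amounts to the identity $\ker j = \langle T \rangle$, which I will read off from the more precise characterization by noting that $\phi \in \ker j$ forces $j(\phi)$ to fix the distinguished component ${\mathcal AS}_{[x_0]}$ in particular.

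First I would dispose of the easy direction. If $\phi = T^k$ for some $k \in \Z$, then for any $x \in X$ the points $T^k x$ and $x$ lie on the same orbit, so ${\rm Orb}_T(\phi(x)) = {\rm Orb}_T(x)$ and hence ${\mathcal AS}_{[\phi(x)]} = {\mathcal AS}_{[x]}$. Thus $j(T^k)$ is the identity permutation of ${\mathcal AS}$ and in particular fixes ${\mathcal AS}_{[x_0]}$. This gives $\langle T \rangle \subseteq \ker j$ together with the fact that powers of $T$ fix the distinguished component.

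The substantive direction is the converse: assuming $j(\phi)$ fixes ${\mathcal AS}_{[x_0]}$, I want to conclude $\phi \in \langle T \rangle$. Since $j(\phi)({\mathcal AS}_{[x_0]}) = {\mathcal AS}_{[\phi(x_0)]}$, the hypothesis reads ${\mathcal AS}_{[\phi(x_0)]} = {\mathcal AS}_{[x_0]}$; that is, the orbits ${\rm Orb}_T(\phi(x_0))$ and ${\rm Orb}_T(x_0)$ are asymptotic. By the definition of asymptoticity of orbits, there is a point of ${\rm Orb}_T(\phi(x_0))$ asymptotic to $x_0$; writing this point as $T^n\phi(x_0) = \phi(T^n x_0)$ via the commutation relation $\phi \circ T = T \circ \phi$, I set $\psi := \phi \circ T^n \in {\rm Aut}(X,T)$ and observe that $x_0$ is asymptotic to $\psi(x_0)$. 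Now the first assertion of Lemma \ref{lem:stab} applies: $\psi$ restricted to $\omega(x_0)$ is the identity map. Since by hypothesis $\omega(x_0) = X$, the automorphism $\psi$ is the identity on all of $X$, whence $\phi = T^{-n} \in \langle T \rangle$.

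Combining the two directions yields exactness at ${\rm Aut}(X,T)$, completing the proof. I expect the only bookkeeping subtlety to be the correct application of the commutation relation when passing from an iterate of $\phi(x_0)$ back to $\phi$ applied to an iterate of $x_0$; the conceptual core is the use of $\omega(x_0) = X$ to upgrade ``identity on $\omega(x_0)$'' to ``identity on $X$'', which is precisely where the hypothesis on $x_0$ is consumed.
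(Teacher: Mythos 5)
Your proof is correct and follows essentially the same route as the paper: both reduce the hypothesis to the existence of $n\in\Z$ with $x_0$ asymptotic to $T^n\phi(x_0)$, then apply the first assertion of Lemma \ref{lem:stab} together with $\omega(x_0)=X$ to conclude $T^n\circ\phi$ (your $\phi\circ T^n$, the same map by commutation) is the identity. The only difference is that you spell out the easy direction and the deduction of exactness, which the paper leaves implicit.
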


\begin{proof}
Let $\phi$ be an automorphism in ${\rm Aut}(X,T)$ and suppose that  ${\mathcal AS}_{[\phi (x_{0})]} =   {\mathcal AS}_{[x_{0}]}$. 
This means that there exists an integer $n\in \Z$ such that 
$x_{0}$ and $T^n \circ \phi (x_{0})$ are asymptotic. 
By Lemma \ref{lem:stab}, $T^n \circ \phi$ is the identity map and thus 
$\phi \in \langle T \rangle$ as desired.
\end{proof}

\begin{proof}[Proof of Theorem \ref{prop:fini}]
We concentrate on the second part of Statement (2), as this is the only facet of the theorem  that does not follow directly from Lemma \ref{lem:fini} and Corollary \ref{cor:suiteexact}. From Corollary \ref{cor:suiteexact}, no asymptotic component is fixed  by a nontrivial automorphism. So, the group  {\rm Aut}$(X,\sigma)/ \langle \sigma \rangle$ acts freely on the finite set of  asymptotic components $\mathcal{AS}$: the stabilizer of any point is trivial. 
Thus, $\mathcal{AS}$ is decomposed into disjoint {\rm Aut}$(X,\sigma)/ \langle \sigma \rangle$-orbits, and each such orbit has the same cardinality as {\rm Aut}$(X,\sigma)/ \langle \sigma \rangle$. \end{proof}

\subsection{Realization of any finite group as Aut$(X,\sigma)/\langle \sigma \rangle$}
\label{sec:characterisation}
In this section we provide a constructive proof that any finite group can be obtained as a quotient Aut$(X,\sigma) / \langle \sigma \rangle$, where $(X,\sigma)$ is a subshift satisfying the hypothesis of Theorem \ref{prop:fini}. As mentioned earlier, this result can be deduced from results in \cite{HP} and \cite{LM} concerning the automorphism groups of subshifts induced by constant-length substitutions. However, we prefer to give a direct proof in order to highlight the notion of {asymptotic components}. We also provide a new proof of the characterization of the automorphism groups of subshifts induced by the bijective constant-length substitutions of Host and Parreau \cite{HP}.

\subsubsection{Properties of asymptotic pairs of subshifts induced by constant-length substitutions}

\begin{lem}\label{lem:asymptoticpair1} Let $\tau \colon \cA \to \cA^*$ be a primitive aperiodic bijective constant-length substitution.
Let $x=(x_{n})_{n\in \Z}$ and $y=(y_{n})_{n\in \Z}$ be an asymptotic pair for  $(X_{\tau},\sigma)$ such that $x_{n} =y_{n}$ for each $n\ge 0$ and $x_{-1} \neq y_{-1}$. Then, there exist asymptotic points $x'=(x'_{n})_{n\in \Z}$ and $y'=(y'_{n})_{n\in \Z}$ for $(X_{\tau},\sigma)$ with $x'_{n} =y'_{n}$ for each $n\ge 0$ and $x'_{-1} \neq y'_{-1}$ such that
$\tau(x')= x \textrm{ and } \tau(y')=y$.
\end{lem}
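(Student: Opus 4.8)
The goal is to ``desubstitute'' the asymptotic pair $(x,y)$, producing a preimage pair $(x',y')$ under $\tau$ that is again asymptotic and splits at coordinate $-1$ in the same way. The plan is to exploit the recognizability (unique decomposition) property of primitive aperiodic substitutions together with the constant-length and bijectivity hypotheses to reconstruct $x'$ and $y'$ coordinate by coordinate.

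First I would recall that every point of $X_\tau$ admits a unique decomposition into $\tau$-blocks: since $\tau$ is primitive and aperiodic, the substitution is recognizable, so there is a well-defined ``cutting'' of each $z \in X_\tau$ into consecutive images $\tau(a)$ of letters, and a well-defined desubstituted point together with the position of the origin inside its block. Applying this to $x$ and to $y$ yields candidate preimages, but the subtle point is that $x$ and $y$ differ only in coordinates $<0$ while agreeing on all coordinates $\ge 0$. Because $\tau$ has constant length $\ell = |\tau|$, the cutting points of a sequence sit on a fixed arithmetic progression modulo $\ell$; since $x$ and $y$ agree on the entire right half $n\ge 0$, their cuttings must agree there, and hence the \emph{phase} (the position of coordinate $0$ within its block) is the same for both. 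This common phase is the key structural fact: it forces $x'$ and $y'$ to be aligned so that a single coordinate of the desubstituted sequences is responsible for the discrepancy.

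Next I would use bijectivity to control exactly where $x'$ and $y'$ differ. Writing $k$ for the common phase so that coordinate $0$ of $x$ (and of $y$) is the $k$-th symbol of the block $\tau(x'_0)=\tau(y'_0)$: since $x$ and $y$ agree on all of $n\ge 0$, these blocks agree, so $x'_n = y'_n$ for all $n \ge 0$ once the origin block is handled; one checks the origin block carefully, but agreement of $x$ and $y$ on positions $0,\dots,\ell-1-k$ together with bijectivity (which makes the map $a \mapsto \tau(a)_i$ injective for each $i$) forces $x'_0 = y'_0$, and then $x'_n=y'_n$ for all $n\ge 0$ by the same argument on each later block. On the left, the first block where $x$ and $y$ disagree is the one containing coordinate $-1$; this corresponds to a single coordinate $-1$ of the desubstituted sequences, and bijectivity again gives $x'_{-1}\neq y'_{-1}$ (distinct blocks $\tau(x'_{-1})$ and $\tau(y'_{-1})$ differ in \emph{every} coordinate, consistent with $x$ and $y$ differing at position $-1$ of their block). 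Finally, asymptoticity of $(x',y')$ follows from asymptoticity of $(x,y)$: since $x'_n=y'_n$ for all $n\ge 0$, the pair $(x',y')$ is asymptotic by definition, and $\tau(x')=x$, $\tau(y')=y$ hold by construction.

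The main obstacle I expect is the careful bookkeeping around the origin block and the phase computation: one must verify that the common agreement on the right half genuinely forces the cuttings of $x$ and $y$ to coincide on $n\ge 0$ (this is where recognizability is essential, to rule out two different legitimate block decompositions), and that the splitting coordinate $-1$ of the block structure translates to exactly coordinate $-1$ of the desubstituted sequences rather than being shifted. The constant-length hypothesis is what makes the phase well-defined and the arithmetic of cutting points transparent, while bijectivity is precisely what upgrades ``the blocks $\tau(x'_{-1})$ and $\tau(y'_{-1})$ differ somewhere'' to ``$x'_{-1}\neq y'_{-1}$'' and, in the other direction, guarantees that agreement of the images on the right forces agreement of the desubstituted letters there.
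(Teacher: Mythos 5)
Your overall strategy (desubstitute via recognizability, use the constant length to get a common cutting phase for $x$ and $y$, and use bijectivity to transfer agreement and disagreement of letters) is exactly the paper's, but there is a genuine gap at the decisive point: you never prove that the common phase $k$ equals $0$, and your argument needs this twice. First, the conclusion of the lemma is the exact equality $\tau(x')=x$ and $\tau(y')=y$; if the cutting of $x$ has phase $k\neq 0$, then $x\notin\tau(X_\tau)$ and no such $x'$ exists at all --- desubstitution only gives $x=\sigma^{k}(\tau(x'))$ for that $k$. Second, your claim that the block of $x$ containing coordinate $-1$ ``corresponds to a single coordinate $-1$ of the desubstituted sequences'' is precisely the statement that a cut falls between coordinates $-1$ and $0$, i.e.\ $k=0$; for $k\geq 1$ the block containing coordinate $-1$ is the origin block $\tau(x'_0)$, so your left-side argument silently assumes what has to be shown. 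You even flag this issue (``the splitting coordinate $-1$ \dots rather than being shifted'') as the expected main obstacle, but the proposal never resolves it.

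The fix is short, and you already have the ingredients: from agreement of $x$ and $y$ at coordinate $0$ and bijectivity you derived $x'_0=y'_0$. If $k\geq 1$, then coordinate $-1$ of $x$ and of $y$ lies inside the blocks $\tau(x'_0)=\tau(y'_0)$, which agree in every coordinate, forcing $x_{-1}=y_{-1}$ and contradicting the hypothesis. Hence $k=0$, and only then do your claims ``$x'_{-1}\neq y'_{-1}$'' and ``$\tau(x')=x$, $\tau(y')=y$ hold by construction'' become valid. This contradiction argument is exactly how the paper closes the proof.
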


\begin{proof} 
Let $\ell$ be the length of the substitution $\tau$. By the classical result of B. Moss\'e \cite{M92,M96} on recognizability, the map induced by $\tau$ on $X_{\tau}$,
$\tau \colon X_{\tau} \to \tau(X_{\tau})$, is one-to-one. Moreover,
the collection $\{\sigma^{k}\tau(X_{\tau}); k=0, \ldots, \ell-1\}$ is a clopen partition (formed by subsets that are simultaneously closed and open) of  $X_{\tau}$. 
Then, there exist $x'= (x'_{n})_{n \in \Z} , y' =(y'_{n})_{n \in \Z} \in X_{\tau}$ and  $0 \le k_{x},k_{y} < \ell$ such that $\sigma^{k_{x} }\tau(x')= x$ and    
$\sigma^{k_{y} }\tau(y')= y$. 

We claim that $k_{x} =k_{y}=0$. Since the sequences $x$ and $y$ are asymptotic, there are integers $n \ge 0$ and $ k'\in \{0,\ldots, \ell-1\}$ such that $\sigma^n(x), \sigma^n (y) \in \sigma^{k'}(\tau(X_{\tau}))$.  The substitution $\tau$ is of constant-length $\ell$, so we have $\sigma^\ell\circ \tau = \tau \circ\sigma$. Therefore, $x$ and $y$ are in the same clopen  set $\sigma^{k}(\tau(X_{\tau}))$ for some $ k\in \{0,\ldots, \ell-1\}$. This shows that $k=k_x=k_y$.

\noindent Next, let us assume that $k \ge 1$. The  words $x_{-k}\ldots x_{0}$, $y_{-k}\ldots y_{0} $  are then prefixes of the words  $\tau(x'_{0})$ and  $\tau(y'_{0})$ respectively. Since the substitution $\tau$ is bijective and  $x_{0}=y_{0}$,  we have that $x'_{0} = y'_{0}$. In particular, we get that $x_{-1} = y_{-1}$, which is a contradiction.   

To complete the proof recall that the substitution $\tau$ is bijective, so 
for all $n\ge 0$ we have $x'_{n} =y'_{n}$ and $x'_{-1} \neq y'_{-1}$.
\end{proof} 

\begin{lem}\label{lem:asymptoticpair} Let $\tau \colon \cA \to \cA^*$ be a primitive aperiodic bijective constant-length substitution. Then, there exists an integer $p \ge 0$ such that for all 
asymptotic points $x=(x_{n})_{n\in \Z}$ and $y=(y_{n})_{n\in \Z}$ for $(X_{\tau},\sigma)$,  the onesided infinite sequences 
$(x_{n})_{n \ge n_0}$ and $(y_{n})_{n\ge n_0}$ coincide for some $n_0 \in \Z$ and are fixed by $\tau^{p}$.
\end{lem}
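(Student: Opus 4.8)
The plan is to reduce the statement to the common right tail of the asymptotic pair and then to exploit Lemma~\ref{lem:asymptoticpair1} as a desubstitution step that can be iterated inside a finite set. First I would normalize: an arbitrary asymptotic pair $x,y$ has a last coordinate of disagreement, say $d$, so that $x_n=y_n$ for $n>d$ and $x_d\neq y_d$. Replacing $(x,y)$ by $(\sigma^{d+1}x,\sigma^{d+1}y)$ brings the pair to the normalized form required by Lemma~\ref{lem:asymptoticpair1}, and the common tail $(x_n)_{n\ge d+1}$ becomes the one-sided sequence $w:=((\sigma^{d+1}x)_n)_{n\ge 0}$. It therefore suffices to produce a single power $\tau^p$, independent of the pair, fixing every such $w$; the conclusion then holds with $n_0=d+1$.

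Next I would record two facts. First, the common tail $w$ of a normalized asymptotic pair is a \emph{left-special ray}: each prefix $w_0\cdots w_{n-1}$ is a left-special word, witnessed by the distinct letters $x_{-1}\neq y_{-1}$. Distinct rays are separated by distinct prefixes of some common length, and those prefixes are left-special words of that length. Since constant-length primitive substitutive subshifts have $\liminf_{n} p_X(n)/n<\infty$, the bound on the number of left-special words established in the proof of Lemma~\ref{lem:fini} applies, so there are only finitely many left-special rays and hence finitely many possible common tails; call this finite set $S$ and set $N=\#S$. Second, Lemma~\ref{lem:asymptoticpair1} associates to each normalized asymptotic pair a normalized asymptotic pair $(x',y')$ with $\tau(x')=x$ and $\tau(y')=y$; passing to tails and using that $\tau$ has constant length $\ell$, this gives a common tail $w'\in S$ with $\tau(w')=w$.

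Then I would iterate the second fact: starting from $w=w^{(0)}$, I build a sequence $w^{(0)},w^{(1)},w^{(2)},\dots\in S$ with $\tau(w^{(k+1)})=w^{(k)}$, so that $\tau^{k}(w^{(k)})=w^{(0)}$ for every $k$. Since $S$ is finite, the pigeonhole principle yields $0\le k<k'\le N$ with $w^{(k)}=w^{(k')}$; chaining the relations $\tau(w^{(j+1)})=w^{(j)}$ gives $\tau^{k'-k}(w^{(k)})=w^{(k)}$, and because $w=\tau^{k}(w^{(k)})$ and $\tau$ commutes with its own powers, $w$ itself is fixed by $\tau^{k'-k}$. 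The exponent $k'-k\le N$ may a priori depend on the pair, so for the uniform statement I would take $p=\operatorname{lcm}(1,2,\dots,N)$: from $\tau^{k'-k}(w)=w$ and $(k'-k)\mid p$ we get $\tau^{p}(w)=w$, and this holds for every common tail at once.

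The main obstacle is the finiteness assertion, namely that only finitely many one-sided sequences occur as common tails of asymptotic pairs. I would handle it purely through the left-special ray argument above, since that converts the count of tails into a count of left-special words of a fixed (arbitrarily large) length, which is controlled by the estimate already used in Lemma~\ref{lem:fini}. Once finiteness of $S$ is in hand, the remaining points—that desubstitution via Lemma~\ref{lem:asymptoticpair1} keeps the tail inside $S$, and that the period can be made uniform—are routine consequences of the constant-length hypothesis and the pigeonhole argument, and require no further delicate analysis.
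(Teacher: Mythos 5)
Your proof is correct, and it shares the paper's skeleton---normalize the pair so that it agrees exactly on coordinates $n\ge 0$ and disagrees at $-1$, then desubstitute repeatedly via Lemma~\ref{lem:asymptoticpair1}---but the mechanism by which you extract the fixed-point property at the end is genuinely different. The paper uses bijectivity a second time: since $a\mapsto\tau(a)_1$ is a permutation of $\cA$, one may take $p$ to be its order, so that $\tau^{p}(b)$ begins with $b$ for every letter $b$; applying Lemma~\ref{lem:asymptoticpair1} to the substitution $\tau^{p}$, the $0$-coordinates of all the desubstituted pairs are then forced to be one and the same letter $a$, whence $\tau^{pn}(a)$ is a prefix of the common tail for every $n$ and the tail is identified explicitly as the one-sided fixed point $z^{(a)}=\lim_{n\to+\infty}\tau^{pn}(aa\cdots)$. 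That argument is self-contained (no complexity input) and produces an explicit, small exponent $p$. You instead prove that the set $S$ of possible common tails is finite---each tail is a left-special ray, distinct rays eventually have distinct left-special prefixes, and the number of left-special words of length $n_i$ is bounded by $\kappa$ exactly as in the proof of Lemma~\ref{lem:fini}, invoking the standard fact that primitive substitutive subshifts have non-superlinear complexity---and then you close the argument abstractly: pigeonhole along the desubstitution orbit inside $S$ gives $\tau^{k'-k}(w)=w$ with $k'-k\le N=\sharp S$, and $p=\operatorname{lcm}(1,\dots,N)$ makes the exponent uniform. Both routes are sound. Yours buys independence from the first-letter permutation structure (bijectivity enters only through Lemma~\ref{lem:asymptoticpair1}), and the finiteness-of-tails observation is a nice bridge to the circle of ideas around Lemma~\ref{lem:fini}; the cost is an imported complexity estimate external to the paper's proof and a far larger, less explicit exponent. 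The paper's route, by contrast, directly exhibits the tails as the $\tau^{p}$-fixed points $z^{(a)}$, which is precisely the form in which the lemma is exploited afterwards (in the proofs of Theorem~\ref{prop:direct} and Theorem~\ref{thm:Host-Parreau}).
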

\begin{proof}
Since $x$ and $y$ are asymptotic, shifting them by the same power of the shift we can assume that $x_{n}=y_{n}$ for every integer $n \ge 0$ and $x_{-1} \neq y_{-1}$. Since $\tau$ is bijective, the map $a \mapsto \tau(a)_{1}$ is a permutation of the alphabet. Thus, there exists an integer $p\geq 1$ such that for each letter $a \in \cA$ every word in the sequence $( \tau^{pn}(a))_{ n \ge 1}$ starts with the same letter. Hence, the sequence $(\tau^{pn}(aa\ldots))_{n\ge 1} $ converges to a onesided infinite sequence $z^{(a)}$ such that $\tau^{p}(z^{(a)})=z^{(a)}$ ($z^{(a)}$ is fixed by $\tau^p$).

Now we inductively apply Lemma \ref{lem:asymptoticpair1} to the substitution 
$\tau^{p}$. For each integer $i\geq 0$ we get asymptotic pairs $x^{(i)}, y^{(i)} \in \cA^\Z$ satisfying the conclusions of the lemma and such that $\tau^{p}(x^{(i+1)}) =x^{(i)}$, $ \tau^{p}(y^{(i+1)}) =y^{(i)}$, with $x^{(0)}=x$ and $y^{(0)}=y$.
By the choice of $p$, the $0$ coordinate of all points $x^{(i)}$ and $y^{(i)}$ coincide at some letter $a \in \cA$. Then $\tau^{pn}(a)$ is a prefix of the sequence $(x_j)_{j\geq 0}$ (that is equal to $(y_j)_{j\geq 0}$) for every $n\in \N$. Therefore, $(x_j)_{j\geq 0}=(y_j)_{j\geq 0}=z^{(a)}$ which is fixed by $\tau^p$ as desired. This concludes the proof of the lemma.
\end{proof}

\subsubsection{Realization of a finite group as ${\rm Aut}(X,\sigma)/\langle \sigma \rangle$}
A first consequence of Lemma \ref{lem:asymptoticpair} is the realization of any finite group as the quotient group  Aut$(X,\sigma)/\langle \sigma \rangle$ of a subshift induced by a constant-length substitution. 

\begin{theo}\label{prop:direct}
Given a  finite group $G$, there exists a minimal substitutive subshift $(X,\sigma)$ such that {\rm Aut}$(X,\sigma)$ is isomorphic to ${\mathbb Z} \oplus G$.
\end{theo}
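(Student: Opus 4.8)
The plan is to realize $G$ via an explicit bijective constant-length substitution whose column maps form the right regular representation of $G$, so that the ``extra'' automorphisms are precisely the coordinatewise left translations by elements of $G$. Assume first that $|G|\ge 2$ (the trivial group is handled by any Sturmian substitution, which has a single asymptotic component and hence automorphism group $\langle\sigma\rangle\cong\Z$). Enumerate $G=\{g_0=e,g_1,\dots,g_{n-1}\}$ with $n=|G|$, take $\cA=G$, and define $\tau\colon \cA\to\cA^n$ by $\tau(a)=(ag_0)(ag_1)\cdots(ag_{n-1})$. This $\tau$ has constant length $n$ and is bijective, since its $i$-th column is the permutation $a\mapsto ag_i$. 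Because $g_0=e$, the zeroth column is the identity, so every letter $a$ is fixed by $\tau$ and $\tau(a)$ runs through all of $\{ag_i\}=G$; hence $\tau$ is primitive (with $p=1$) and $(X_\tau,\sigma)$ is minimal. Aperiodicity holds for $|G|\ge 2$ (this is standard for primitive bijective constant-length substitutions and can be checked directly on the fixed point), so Lemmas \ref{lem:asymptoticpair1} and \ref{lem:asymptoticpair}, as well as Theorem \ref{prop:fini}, apply to $(X_\tau,\sigma)$.

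Next I would exhibit a copy of $G$ inside the automorphism group. For $h\in G$ let $L_h$ be the coordinatewise left translation $L_h(x)_m=h\,x_m$. Since $h(ag_i)=(ha)g_i$, one has $L_h\circ\tau=\tau\circ L_h$, so $L_h$ preserves $X_\tau$; being a letter-to-letter map it commutes with $\sigma$, whence $L_h\in{\rm Aut}(X_\tau,\sigma)$ and $h\mapsto L_h$ is an injective homomorphism. I would then check that $L_h\in\langle\sigma\rangle$ forces $h=e$: if $L_h=\sigma^k$ then $h\,x_m=x_{m+k}$ for every $x\in X_\tau$ and every $m$, so every point of $X_\tau$ is periodic of period dividing $k\cdot{\rm ord}(h)$, contradicting aperiodicity unless $k=0$ and $h=e$. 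Since $\langle\sigma\rangle$ is central in ${\rm Aut}(X_\tau,\sigma)$, this shows that $\langle\sigma\rangle$ and $\{L_h:h\in G\}$ generate an internal direct product isomorphic to $\Z\oplus G$; it then remains only to prove that these exhaust ${\rm Aut}(X_\tau,\sigma)$.

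The heart of the argument is to count the asymptotic components. For each $a\in\cA$ the one-sided sequence $z^{(a)}=\lim_k\tau^k(a)$ is well defined and fixed by $\tau$. By Lemma \ref{lem:asymptoticpair}, after a shift the common positive tail of any asymptotic pair is one of the $z^{(a)}$; moreover all bi-infinite points of $X_\tau$ whose positive tail is a shift of $z^{(a)}$ have pairwise asymptotic orbits, so they constitute a single asymptotic component $C_a$, and every asymptotic component is some $C_a$. This gives $\sharp\mathcal{AS}\le|G|$. Conversely, since $(X_\tau,\sigma)$ is minimal and each $L_h$ with $h\neq e$ is nontrivial in ${\rm Aut}(X_\tau,\sigma)/\langle\sigma\rangle$, Corollary \ref{cor:suiteexact} guarantees that $L_h$ fixes no asymptotic component; thus $\{L_h\}\cong G$ acts freely on $\mathcal{AS}$ and $|G|$ divides $\sharp\mathcal{AS}$. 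Hence $\sharp\mathcal{AS}=|G|$ and the components $C_a$ are pairwise distinct.

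Finally, by Statement (2) of Theorem \ref{prop:fini} the group ${\rm Aut}(X_\tau,\sigma)/\langle\sigma\rangle$ acts freely on the $|G|$-element set $\mathcal{AS}$, so its order divides $|G|$; as it already contains the image of $G$ under $h\mapsto L_h$, it is isomorphic to $G$, with the $L_h$ as coset representatives. Therefore every automorphism has the form $\sigma^k L_h$, and combined with the second paragraph this yields ${\rm Aut}(X_\tau,\sigma)\cong\Z\oplus G$. The main obstacle is the exact determination of the asymptotic components in the third paragraph: everything hinges on Lemma \ref{lem:asymptoticpair} to force every asymptotic tail to be a substitution fixed point, and on verifying that the regular-representation substitution is genuinely aperiodic, which is what makes the counting and the triviality of $\langle\sigma\rangle\cap\{L_h\}$ go through.
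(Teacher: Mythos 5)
Your construction is exactly the paper's (the same regular-representation substitution $\tau(g)=(gg_0)(gg_1)\cdots(gg_{q-1})$, the same embedding $h\mapsto L_h$, the same argument that $L_h\in\langle\sigma\rangle$ forces $h=e$), so the only issues are aperiodicity and the exhaustion step. The aperiodicity step is a genuine flaw as written: the parenthetical claim that aperiodicity ``is standard for primitive bijective constant-length substitutions'' is false. For example, $\tau(a)=aba$, $\tau(b)=bab$ is primitive, bijective and of constant length $3$, yet its subshift is the two-point periodic orbit of $(ab)^{\infty}$. Aperiodicity is load-bearing everywhere in your argument: it is the hypothesis of Lemmas~\ref{lem:asymptoticpair1} and~\ref{lem:asymptoticpair}, it is what guarantees that nontrivial asymptotic pairs exist at all (so that $\mathcal{AS}\neq\emptyset$; with $\mathcal{AS}=\emptyset$ your divisibility argument gives nothing), and it is what your own triviality argument for $\langle\sigma\rangle\cap\{L_h\}$ appeals to. So it must be proved for this specific $\tau$, not cited; ``can be checked directly on the fixed point'' is a promissory note. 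The paper's fix is short and you should adopt it: $g_0g_1$ occurs in $\tau(g_0)$, hence $\tau(g_0)\tau(g_1)$ lies in the language and contains the occurrence $g_{q-1}g_1\neq g_0g_1$; iterating, $\tau^{m}(g_0)\tau^{m}(g_1)$ and $\tau^{m}(g_{q-1})\tau^{m}(g_1)$ lie in $\cL(X_\tau)$ for all $m$, their right halves agree while bijectivity makes their left halves differ in the last letter, and a limit produces a nontrivial asymptotic pair --- impossible in a finite subshift. This argument has the added benefit of delivering, at the same time, the nonemptiness of $\mathcal{AS}$ that your counting needs.

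Once aperiodicity is secured, the rest of your proof is correct, and your exhaustion step genuinely differs from the paper's. The paper argues directly on an arbitrary $\phi\in{\rm Aut}(X_\tau,\sigma)$: it applies Lemma~\ref{lem:asymptoticpair} to an asymptotic pair and its $\phi$-image, shifts so that both $\tau^{p}$-fixed tails start at the same coordinate, checks via $L_g\circ\tau=\tau\circ L_g$ that a suitable left translation matches the two tails, and concludes from Lemma~\ref{lem:stab} that a shifted copy of $\phi$ \emph{is} that translation. You instead count: Lemma~\ref{lem:asymptoticpair} (with $p=1$ here, since the first column of $\tau$ is the identity) shows every asymptotic component is some $C_a$, giving $\sharp\mathcal{AS}\le\sharp G$; Corollary~\ref{cor:suiteexact} makes $\{L_h\}$ act freely on $\mathcal{AS}$, so $\sharp G$ divides $\sharp\mathcal{AS}$, whence $\sharp\mathcal{AS}=\sharp G$; and Theorem~\ref{prop:fini}(2) then caps the order of ${\rm Aut}(X_\tau,\sigma)/\langle\sigma\rangle$ at $\sharp G$, which the image of $G$ already attains. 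This is a valid alternative: it never has to manipulate a general automorphism, at the price of leaning on Theorem~\ref{prop:fini}(2) and on pinning down $\mathcal{AS}$ exactly. The paper's direct route is self-contained and is the one that scales up to the full Host--Parreau characterization (Theorem~\ref{thm:Host-Parreau}), which a pure counting argument cannot give.
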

\begin{proof}
If $G$ is the trivial group then we can consider $(X,\sigma)$ to be the Fibonacci subshift, which is also an Sturmian subshift (see also \cite{Ol13}). This result also follows from Theorem \ref{prop:fini} since one can easily prove in this case that there exists a unique asymptotic component.  

Now, we assume that the finite group $G$ is not trivial. 
We choose an enumeration of its elements $G = \{ g_0 , g_1,$  $\dots,$  $g_{q-1} \}$ with  $q \ge 2$ and we set $g_{0}$ to be the identity element. 

For an element $g\in G$, let $L_{g} \colon G \to G$ denote the bijection $ h \mapsto gh$.  We see $G$ as a finite alphabet and define the substitution of constant length $\tau: G \to G^*$ by 
$$
\tau \colon  g \mapsto  L_{g} (g_0) L_{g}( g_1) \cdots L_{g} (g_{q-1}).
$$
Since the map $L_{g}$ is a bijection on $G$, then the substitution $\tau$ is primitive and bijective. 

We claim that  the  subshift $(X_{\tau}, \sigma)$ is not periodic, {\em i.e.}, it does not reduce to a periodic orbit.   
To show this fact, it suffices to give an example of a nontrivial asymptotic pair. 
By the definition of $\tau$ the word $g_{0}g_{1} \in \cL(X_{\tau})$. 
Hence the words $\tau(g_{0})\tau(g_{1})$  and its subword $g_{q-1}g_{1}$ (which is different from the word $g_{0}g_{1}$) also belong to $\cL(X_{\tau})$. It follows that $\tau^n(g_{0})\tau^n(g_{1}), \tau^n(g_{q-1})\tau^n(g_{1}) \in  \cL(X_{\tau})$ for every integer $n \ge 0$. Taking a subsequence if necessary, these words converge as $n$ goes to infinity to two different sequences $x$ and $y \in X_{\tau}$ that are asymptotic by construction. 

Given an element $g\in G$ we  extend the definition of the map $L_{g}$ to words in $G^*$ or infinite onesided or twosided infinite sequences by: 
$L_g((h_i)_{i\in I})=(gh_i)_{i\in I}$, where $I$ is a finite or infinite set of indexes. In particular, this defines a left continuous $G$-action on $G^\Z$. 
Moreover, each map $L_{g}$ preserves the subshift $X_{\tau}$. Indeed, if $x=(x_n)_{n\in \Z} \in X_{\tau}$ then for all integers $j\in \Z$ and 
$m \geq 1$
the word $x_j\ldots x_{j+m-1}$ is a subword of $\tau^N(h)$ for some $N \in \N$ and $h\in G$. Then, 
$L_g(x_j\ldots x_{j+m-1})=gx_j\ldots gx_{j+m-1}$ is a subword of $L_g(\tau^N(h))$. But we have the relation 
\begin{align}\label{eq:taugroup}
L_g(\tau(h))= \tau(L_g(h)) \text{ for every } g,h \in G,
\end{align}
so $L_g(x_j\ldots x_{j+m-1})$ is a subword of $\tau^N(L_g(h))$. This implies that $L_g(x) \in X_\tau$ as desired. Thus we have a left continuous action of $G$ on $X_{\tau}$. It is clear that $L \colon  g\mapsto L_{g}$ defines an injection of $G$ into $ {\rm Aut}(X_{\tau},\sigma)$. 
\smallskip

To finish the proof we need the following claim: 
\smallskip

\noindent {\bf Claim:} {\it The map
$\varphi \colon \Z \times G \to {\rm Aut} (X_{\tau},\sigma)$, 
$(n, g) \mapsto \sigma^n \circ L_{g}$ is a group  isomorphism.}
\vskip 0.1cm

To show the injectivity of the map $\varphi$, let us assume there exists 
$n \in \Z$ and $g \in G$ such that $L_{g}=\sigma^{n}$. We can 
assume that $n\geq 0$, the other case is analogous. Then, for every $x \in X_\tau$ we have that $x_{k n + m}=g^{k-1}x_m$ for all $k \in \Z$ and $m \in \{0,\ldots,n-1\}$. But the sequence $(g^{k-1})_{k\in \Z}$ is periodic, so $x$ is periodic. This is a contradiction since $\tau$ is aperiodic.

To show $\varphi$ is surjective it is enough to prove that each automorphism $\phi \in {\rm Aut}(X_{\tau}, \sigma)$ can be written as a power of the shift composed with a map of kind $L_{g}$.  
Assume $x, y$ is an asymptotic pair in $X_\tau$.  
By Lemma \ref{lem:asymptoticpair}, since $\phi(x)$ and $\phi(y)$ are also asymptotic points, there exist integers $p>0$ and $n_0,n_1 \in \Z$ such that 
$z_1=(x_n)_{n\geq n_0}=(y_n)_{n\geq n_0}$, $z_2=(\phi(x)_n)_{n\geq n_1}=(\phi(y)_n)_{n\geq n_1}$ and both sequences are fixed by $\tau^p$ (observe that from Lemma \ref{lem:asymptoticpair} we can use the same power $p$ for every couple of asymptotic pairs).  Taking $\phi_{1}= \sigma^{n_0-n_1} \circ \phi$ instead of $\phi$ we can assume that $n_1=n_0$.  

Set $g_1=x_{n_0}$ and $g_2=\phi_{1}(x)_{n_0}$. Since $z_1$ and $z_2$ are fixed by 
$\tau^p$ we have that $z_1=\lim_{n\to +\infty} \tau^{pn}(g_{1}g_{1}\ldots)$ and 
$z_2=\lim_{n \to +\infty} \tau^{pn} (g_{2}g_{2}\ldots)$. Now, by \eqref{eq:taugroup}, for all $n\in \N$ we have that
$L_{{g_{1}}(g_{2}^{-1})} (\tau^{pn}(g_2)) = \tau^{pn}(L_{{g_{1}}(g_{2}^{-1})}(g_2))
=\tau^{pn}(g_1)$. Then, $L_{{g_{1}}(g_{2}^{-1})}(z_2)=z_1$. This proves that 
$x$ and $L_{{g_{1}}(g_{2}^{-1})} \circ \phi_{1} (x)$ are asymptotic points. 
Therefore, by Lemma \ref{lem:stab}, we get $\phi_{1}=(L_{{g_{1}}(g_{2}^{-1})})^{-1} = L_{{g_{2}}(g_{1}^{-1})}$. 
So the original $\phi$ is a power of the shift composed with some translation $L_g$. This proves the claim and thus completes the proof of Theorem \ref{prop:direct}.
\end{proof}

\subsubsection{Characterization of ${\rm Aut}(X_\tau,\sigma)$ for bijective constant-length substitutions subshifts}  
Thanks to Lemma \ref{lem:asymptoticpair} we can offer a different proof of the following result due to B. Host and F. Parreau. 

\begin{theo}\label{thm:Host-Parreau}\cite{HP} Let $\tau \colon \cA \to \cA^*$ be  a primitive bijective constant-length substitution. Then, each automorphism of the subshift $(X_{\tau},\sigma)$ is the composition of some power of the shift with an automorphism $\phi \in {\rm Aut}(X_{\tau},\sigma)$ of radius $0$. 
Moreover, its  local rule $\hat{\phi}\colon \cA \to \cA$ satisfies 
\begin{eqnarray}\label{eq:commutation}
\tau \circ \hat{\phi} = \hat{\phi} \circ \tau.
\end{eqnarray}
\end{theo}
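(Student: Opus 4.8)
The plan is to prove the two assertions in turn: that every $\phi\in{\rm Aut}(X_\tau,\sigma)$ becomes, after composition with a power of $\sigma$, an automorphism $\phi_1$ of radius $0$; and that the letter rule $\hat\phi\in\operatorname{Sym}(\cA)$ of such a $\phi_1$ commutes with $\tau$. I treat the aperiodic case (if $X_\tau$ is finite the statement is immediate) and freely use Moss\'e recognizability, as in the proof of Lemma~\ref{lem:asymptoticpair1}, together with Lemma~\ref{lem:asymptoticpair}. Write $\ell=|\tau|$, let $c_i\colon\cA\to\cA$, $c_i(a)=\tau(a)_i$, be the $i$-th column map, which is a permutation because $\tau$ is bijective, and let $\theta\colon X_\tau\to\{0,\dots,\ell-1\}$ record the position inside the current $\tau$-block, coming from the clopen partition $X_\tau=\bigsqcup_{k=0}^{\ell-1}\sigma^{k}\tau(X_\tau)$.

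First I reduce the radius. The map $\theta\circ\phi-\theta \pmod\ell$ is continuous, since $\theta$ is locally constant by recognizability, and it is $\sigma$-invariant because $\theta\circ\sigma=\theta+1$ while $\phi$ commutes with $\sigma$; as $(X_\tau,\sigma)$ is minimal it is therefore a constant $k$. Hence $\sigma^{-k}\phi$ maps $\tau(X_\tau)$ into $\tau(X_\tau)$, so $\phi':=\tau^{-1}\circ\sigma^{-k}\phi\circ\tau$ is a well-defined automorphism of $(X_\tau,\sigma)$, and a direct estimate shows its radius is strictly smaller than the radius ${\mathbf r}$ of $\phi$ once ${\mathbf r}$ is large. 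Iterating this desubstitution finitely many times produces $m\in\Z$ with $\phi_1:=\sigma^{m}\phi$ of radius $0$; write $\hat\phi$ for its letter rule, a language-preserving element of $\operatorname{Sym}(\cA)$.

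Next I show $\phi_1$ preserves $\tau$-block boundaries, and this is where Lemma~\ref{lem:asymptoticpair} enters. Pick a nontrivial asymptotic pair $x,y$ (these exist in an aperiodic subshift) normalized as in that lemma, so that $x_n=y_n$ for $n\ge0$, $x_{-1}\neq y_{-1}$, and the common tail equals the $\tau^{p}$-fixed point $z^{(a)}=\lim_n\tau^{pn}(a)$ with $a=x_0$; in particular position $0$ is a $\tau^{p}$-block boundary. Since $\phi_1$ has radius $0$ and $\hat\phi$ is injective, $\phi_1(x),\phi_1(y)$ again agree exactly on $[0,+\infty)$ and differ at $-1$, so they form a normalized asymptotic pair whose tail, by Lemma~\ref{lem:asymptoticpair}, is once more $\tau^{p}$-fixed and starts at $0$. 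Comparing on both sides the level-$p$ displacement (constant, by the same minimality argument applied to the level-$p$ address $\theta^{(p)}$) forces $k=0$; thus $\hat\phi\circ\tau=\tau\circ\rho_1$ for some $\rho_1\in\operatorname{Sym}(\cA)$, and reading the $i$-th coordinate of $\tau$ gives $\rho_1=c_i^{-1}\hat\phi\,c_i$ for every $i$. Iterating, $\hat\phi(\tau^{n}(a))=\tau^{n}(\rho_n(a))$ with $\rho_n=c_1^{-n}\hat\phi\,c_1^{\,n}$; choosing $p$ so that $c_1^{\,p}=\id$ yields $\rho_{p}=\hat\phi$, whence $\hat\phi(z^{(a)})=z^{(\hat\phi(a))}$ for every $a$ and, reading first letters of the $\tau^{p}$-blocks, $\hat\phi\circ\tau^{p}=\tau^{p}\circ\hat\phi$.

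The remaining step — upgrading commutation with $\tau^{p}$ to commutation with $\tau$, i.e. proving $\rho_1=\hat\phi$ — is the heart of the matter and the part I expect to be hardest. Equivalently one must show that $\hat\phi$ commutes with each individual column $c_i$, and not merely with the differences $c_ic_j^{-1}$ as the relations above give; this is a strict strengthening as soon as the column group $\langle c_1,\dots,c_\ell\rangle$ is nonabelian, so it cannot follow formally from what precedes. The operation $\mathcal D\colon\psi\mapsto c_1^{-1}\psi\,c_1$ is an automorphism, of order dividing $p$, of the finite group $\mathbf A_0$ of radius-$0$ automorphisms (that $\rho_1\in\mathbf A_0$ follows from recognizability), and $\rho_1=\hat\phi$ for all elements of $\mathbf A_0$ is exactly the assertion that $\mathcal D$ is trivial, i.e. that $c_1$ centralizes $\mathbf A_0$. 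I would establish this from aperiodicity and unique ergodicity: these pin down the occurrence statistics of the words $\tau^{n}(a)$ and of their $\hat\phi$-images tightly enough to force the block-to-block coding of $\hat\phi$ to be level-independent, hence $\mathcal D=\id$. This is precisely where the present, asymptotic-pair-based argument must supply the structural strength that the original proof of Host and Parreau drew from the spectral analysis of the substitution; once it is in hand, $\rho_1=\hat\phi$ gives $\tau\circ\hat\phi=\hat\phi\circ\tau$ and, together with the first steps, the theorem.
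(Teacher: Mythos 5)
Your middle argument is sound and, up to notation, reaches exactly the point that the paper's own proof reaches: a radius-$0$ automorphism whose letter rule $\hat\phi$ satisfies $\hat\phi\circ\tau^{p}=\tau^{p}\circ\hat\phi$. The genuine gap is the one you flagged yourself: you never prove $\rho_1=\hat\phi$, and the route you sketch (unique ergodicity and occurrence statistics forcing level-independence) cannot work, because the commutation \eqref{eq:commutation} with $\tau$ itself is false in this generality. Take $\cA=\Z/3\Z$ and $\tau(x)=(1-x)(2-x)(-x)$, i.e. $\tau(0)=120$, $\tau(1)=012$, $\tau(2)=201$. This substitution is bijective (its three columns are the reflections $x\mapsto c-x$), primitive (each $\tau(a)$ contains all three letters), and aperiodic (its fixed point $u=\lim_{n}\tau^{n}(2)$ satisfies $u_m\equiv 2+\sum_{j}(-1)^{j}d_j(m)\pmod 3$, where the $d_j(m)$ are the base-$3$ digits of $m$, and such a sequence is not periodic). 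The rotation $\pi(x)=x+1$ satisfies $c^{-1}\pi c=\pi^{-1}$ for every reflection $c$, hence $\pi\circ\tau=\tau\circ\pi^{-1}$ and $\pi\circ\tau^{2}=\tau^{2}\circ\pi$. The second identity shows $\pi$ preserves $\cL(X_\tau)$ (every word of the language occurs in some $\tau^{2n}(a)$), so the letterwise map $\bar\pi$ is a radius-$0$ automorphism; the first shows its rule does not commute with $\tau$. Moreover no other representative exists: if $\bar\pi=\sigma^{k}\circ\phi$ with $\phi$ of radius $0$, then $\sigma^{k}$ is itself letterwise, which forces every point of $X_\tau$ to be periodic unless $k=0$; so $\phi=\bar\pi$ and \eqref{eq:commutation} fails. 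Thus your instinct that the last step ``cannot follow formally from what precedes'' is right in the strongest sense: what is provable, and what both your argument and the paper's actually establish, is commutation with a suitable power $\tau^{p}$ (the form in which the Host--Parreau theorem is usually quoted). The paper's proof has the same structure as yours up to this point and then disposes of the upgrade in the single unjustified sentence ``Since $\tau$ is bijective we also get relation \eqref{eq:commutation}''; your refusal to claim that step reflects good judgement, but as a proof of the statement as written your proposal is incomplete, and indeed cannot be completed.

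Your first step (radius reduction) also has two problems, and here the paper takes a genuinely different route. First, the estimate for $\tau^{-1}\circ\sigma^{-k}\phi\circ\tau$ gives radius roughly ${\mathbf r}/\ell+O(1)$, so iteration stalls at a bounded positive radius (the fixed point of $r\mapsto r/\ell+O(1)$), not at radius $0$. Second, and more seriously, after $N$ desubstitutions you hold a small-radius automorphism equal to $\tau^{-N}\sigma^{-K}\phi\,\tau^{N}$, not to $\sigma^{m}\phi$; transporting ``radius $0$'' back through conjugation by $\tau^{N}$ requires that all conjugates $C^{-1}\hat\phi\,C$ over length-$N$ column products $C$ coincide, which is exactly the column-conjugation invariance at stake in your last step, so the reduction as stated is circular. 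The paper avoids this entirely: it never reduces the radius. It uses Lemma \ref{lem:asymptoticpair} to normalize an asymptotic pair so that both $(x_n)_{n\ge 0}$ and $(\phi'(x)_n)_{n\ge 0}$ (with $\phi'=\sigma^{n_1}\circ\phi$) are one-sided fixed points of $\tau^{p}$ with $|\tau^{p}|>2{\mathbf r}+1$, deduces from bijectivity that $\phi'(x)_n$ depends only on $x_n$ (which simultaneously defines the letter rule $\hat\psi$ and yields $\hat\psi\circ\tau^{p}=\tau^{p}\circ\hat\psi$), and then identifies $\psi=\phi'$ via Lemma \ref{lem:stab}. If you rewrite your argument, adopt that construction and state the conclusion as commutation with $\tau^{p}$ rather than with $\tau$.
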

Observe that  a local map satisfying \eqref{eq:commutation} defines an automorphism of the subshift. Hence, since there is a finite number of local rules of radius $0$, we have an algorithm to determine the group of automorphisms for these kinds of subshifts.

\begin{proof} First we notice that if $X_{\tau}$ is finite then it is reduced to a finite orbit. Hence an automorphism is a power of the shift map. From now on, we assume $\tau$ is aperiodic.

Let $x=(x_{n})_{n \in \Z},y=(y_{n})_{n \in \Z} \in X_{\tau}$ be two asymptotic sequences and consider $\phi \in {\rm Aut}(X_{\tau}, \sigma)$. As discussed before, $\phi(x)$ and $\phi(y)$ are also asymptotic pairs.

By Lemma \ref{lem:asymptoticpair}, there exist integers $p \ge 0$ and $n_0,n_1 \in \Z$ such that $(x_{n})_{n \ge n_0}=(y_{n})_{n \ge n_0}$, 
$(\phi(x)_n)_{n\ge n_1}=(\phi(y)_n)_{n\ge n_1}$ and  all sequences are fixed by $\tau^{p}$ (observe that from Lemma \ref{lem:asymptoticpair} we can use the same power $p$ for every couple of asymptotic pairs). 

After shifting we can assume that $n_0=0$. Also, in what follows we will consider the automorphism $\phi' = \sigma^{n_{1}} \circ \phi$. Thus the sequence 
$(\phi'(x)_{n})_{n\ge 0}=(\phi(x)_n)_{n\ge n_1}$ is fixed by $\tau^p$.

Let  $\bf r$ and  $\hat{\phi'}$ denote the radius  and  the local map of $\phi'$  respectively. Taking a power of $\tau^{p}$ if needed, we can assume that the length $\ell$ of substitution $\tau^{p}$ is greater than $2 {\bf r} +1$. 
Consider different integers $m,n \geq 0$ such that $x_{n}=x_{m}$. We have  
$\phi'(x)_{m\ell +{\bf r}  }$  $= \hat{\phi'}(x_{m\ell}\ldots$ $x_{m\ell +2 {\bf r}})$  $ = \hat{\phi'}(\tau^{p} (x_{m})_{[0,2{\bf r}]})= \hat{\phi'}(\tau^{p} (x_{n})_{[0,2{\bf r}]}) = {\phi'}(x)_{n\ell + {\bf r}}$, where  for a word 
$u = u_{0} \ldots u_{\ell-1}$,  $u_{[0,2{\bf r}] }$ stands for the prefix $u_{0}\ldots u_{2{\bf r}}$. Since $\phi'(x)_{n\ell +{\bf r}}$ and $\phi'(x)_{m\ell+{\bf r}} $ are  the $(r+1)^{\rm th}$ letters of the words $\tau^{p}(\phi'(x)_{n})$ and $\tau^{p}(\phi'(x)_{m})$ respectively, and the substitution $\tau$ is bijective, we obtain that $\phi'(x)_{n} = \phi'(x)_{m}$.  
Then the map $\hat{\psi} \colon \cA \to \cA$ given by $\hat{\psi}(x_{n}) = \phi'(x)_{n}$ for all $n\ge 0$ is well defined. 

Let $\psi \colon \cA^\Z \to \cA^\Z$ be the shift commuting map with local map $\hat\psi$. By construction, for each word $w \in \cL(X_{\tau})$ we have that 
$\hat\psi(\tau^p(w)) = \tau^p(\hat\psi(w))$, then $\psi(X_{\tau}) \subseteq X_{\tau}$. Since $\tau$ is bijective we also get relation \eqref{eq:commutation} for $\hat\psi$.  

In the same way, using $\phi'^{-1}$ instead of $\phi'$, we obtain that $\psi$ is invertible. By construction, we have that $\psi^{-1} \phi'(x)$ is asymptotic to $x$, so by Lemma \ref{lem:stab}, $\psi = \phi' = \sigma^{n_{1}} \circ \phi$.  
This completes the proof of Theorem \ref{thm:Host-Parreau}.

\end{proof}

\section{Examples illustrating Theorem \ref{prop:fini}}\label{sec:example}
In this section we present two examples to illustrate Theorem \ref{prop:fini} and the technique behind it. We start with a minimal subshift which shows non-super linear and superpolynomial complexity along subsequences. Since it is minimal, Part (2) of Theorem \ref{prop:fini} is satisfied. The second example is a transitive non-minimal substitutive subshift with superlinear complexity. It does not satisfy all the hypotheses of  Theorem \ref{prop:fini} but the technique of the proof applies. In fact, it has a unique asymptotic component that we are able to characterize in order to prove that its automorphism group is isomorphic to $\Z$.  

\subsection{A minimal subshift with $\liminf_{n\to +\infty} p_{X}(n)/n < +\infty$ and $\limsup_{n\to +\infty}$ $p_{X}(n)/n = +\infty$}
\label{subsec:subexpo}

Now we present an example of a minimal subshift $(X,\sigma )$ 
induced by a point $x \in \{ 0,1 \}^\N$ in the following way: 
$$X=\{y \in \{ 0,1 \}^\Z; \text{ all words appearing in } y \text{ also appear in } x \}.$$ 
The point $x$ is chosen in order to have the following properties: 

\noindent (i) $x$ is uniformly recurrent: for any $n \in \N$ there exists $N \in \N$ such that every word of length $N$ that appears in $x$ contains all words of length $n$ in $x$; 

\noindent (ii) the complexity of $(X,\sigma)$ is non-superlinear, that is, there exists a positive constant $C$ such that for infinitely many values of $n \in \N$ we have $p_X(n) \leq Cn$; 

\noindent (iii) for a fixed subexponential function $\varphi$ (meaning that $\lim_{n\to +\infty} \varphi(n)/\alpha^n = 0$ for every $\alpha > 1$), the complexity $p_{X}(n)$ is $\Omega_{+}(\varphi(n))$.  
 
It is clear from (i) that $(X,\sigma)$ is minimal. This property and (ii) says that $(X,\sigma)$ satisfies the hypotheses of Theorem \ref{prop:fini}. Then its automorphism group is virtually $\Z$. Property (iii) illustrates that the hypothesis of Theorem \ref{prop:fini} is compatible with high complexities along subsequences, in particular any polynomial complexity.  

We will need the following lemmas whose simple proofs are left to the reader. Also, 
we will denote by $p_z(n)$ the number of words of length $n \in \N$ occurring in a onesided or twosided sequence $z$ on the alphabet $\{0,1\}$. 

\begin{lem}
\label{lemme:trivial}
Let $\xi: \{0,1\}\to \{ 0,1 \}^*$ be a substitution of constant length $L$ and 
$\tau: \{ 0,1 \} \to \{ 0,1 \}^*$ be a substitution such that all the words of length two in the alphabet $\{0,1\}$ appear as subwords of $\tau(0)$ and $\tau(1)$. 
Then for every  $x\in \{ 0,1 \}^\mathbb{N}$ having occurrences of all words of length two in the alphabet $\{0,1\}$, $y \in \{ 0,1 \}^\mathbb{N}$ and $0< l\leq L$ we have
$p_{\xi  (x)} (l) = p_{\xi \circ \tau (y)} (l) $. 
\end{lem}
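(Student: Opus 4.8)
The plan is to show that both complexities count the same set of words of length $l$ for $l \le L$, by exhibiting a length-preserving bijection between occurrences. The key observation is that since $\xi$ has constant length $L$, every word of length $l \le L$ occurring in $\xi(z)$, for any sequence $z$, is a factor of some two-letter image $\xi(ab)$ where $ab$ is a word of length two appearing in $z$; indeed a window of length $l \le L$ can straddle at most two consecutive blocks $\xi(z_i)\xi(z_{i+1})$. Conversely every factor of such a $\xi(ab)$ with $ab$ appearing in $z$ does occur in $\xi(z)$. Hence
\begin{equation*}
\mathcal{L}_l(\xi(z)) = \bigcup_{ab \in \mathcal{L}_2(z)} \mathcal{L}_l\big(\xi(a)\xi(b)\big),
\end{equation*}
so that $p_{\xi(z)}(l)$ depends only on the set $\mathcal{L}_2(z)$ of length-two factors of $z$.

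First I would make the preceding paragraph precise, carefully checking the boundary case $l = L$ (a window of length exactly $L$ can still meet only two consecutive blocks, since it starts inside one block and its right endpoint reaches at most the start of the block two positions away). Then I would apply this to the two sequences at hand, $z = x$ and $z = \tau(y)$. It therefore suffices to verify that $x$ and $\tau(y)$ have exactly the same set of length-two factors, namely all four words $00, 01, 10, 11$. For $x$ this is the hypothesis that $x$ has occurrences of all words of length two. For $\tau(y)$ it is the hypothesis that all length-two words over $\{0,1\}$ appear as subwords of $\tau(0)$ and $\tau(1)$: since $\tau(y)$ contains $\tau(0)$ or $\tau(1)$ as a factor (assuming $y$ uses at least one letter, and each $\tau(a)$ already contains all four two-letter words by hypothesis), all of $00,01,10,11$ occur in $\tau(y)$, and trivially no others over a binary alphabet can occur.

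Combining these, $\mathcal{L}_2(x) = \mathcal{L}_2(\tau(y)) = \{0,1\}^2$, and by the displayed identity applied to each of $z = x$ and $z = \tau(y)$ we get $\mathcal{L}_l(\xi(x)) = \mathcal{L}_l(\xi(\tau(y)))$ for every $0 < l \le L$, whence $p_{\xi(x)}(l) = p_{\xi\circ\tau(y)}(l)$ as claimed.

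I do not expect a serious obstacle here; the statement is elementary and the lemma is flagged in the paper as having a simple proof left to the reader. The only point requiring genuine care is the combinatorial claim that factors of length at most $L$ of a constant-length-$L$ substitution image depend only on the two-letter factors of the preimage, in particular getting the endpoint $l = L$ right and confirming that a single letter image $\tau(a)$ already realizes all four two-letter words so that $\tau(y)$ inherits them regardless of $y$.
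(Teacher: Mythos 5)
Your proof is correct: the two-block decomposition $\mathcal{L}_l(\xi(z)) = \bigcup_{ab \in \mathcal{L}_2(z)} \mathcal{L}_l(\xi(a)\xi(b))$ for $l \le L$, combined with the observation that $\mathcal{L}_2(x) = \mathcal{L}_2(\tau(y)) = \{0,1\}^2$, is exactly the intended argument. The paper gives no proof (it explicitly leaves this lemma to the reader as simple), and your write-up, including the check of the boundary case $l = L$, is the natural way to fill it in.
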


In what follows $\rho:\{0,1\} \to \{0,1\}^*$ is the {\it Morse substitution}: $\rho (0) = 01$ and $\rho (1) = 10$. Notice that it is  a bijective constant-length substitution and the words $\rho^3(0)$ and $\rho^3(1)$ contain all the words of length $2$. 
\begin{lem}
\label{lemme:trivial2}
Let $\xi: \{0,1\}\to \{ 0,1 \}^*$ be a substitution of constant length $L$ and consider a point $x\in \{ 0,1 \}^\N$. We have
$p_{\xi \circ \rho^3 (x)} (2L)\leq 6L$. 
\end{lem}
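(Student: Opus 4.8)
The plan is to bound the number of distinct words of length $2L$ that can appear in $\xi \circ \rho^3(x)$ by controlling how few distinct length-$2L$ windows are produced by a constant-length substitution of length $L$. First I would observe that, since $\xi$ has constant length $L$, the image $\xi\circ\rho^3(x)$ is naturally partitioned into consecutive blocks of length $L$, each block being $\xi(a)$ for some letter $a\in\{0,1\}$ appearing in $\rho^3(x)$. A window of length $2L$ in $\xi\circ\rho^3(x)$ therefore straddles at most two consecutive such blocks, and is completely determined by two pieces of data: the ordered pair of letters $(a,b)$ of $\rho^3(x)$ that produced those (at most) two blocks, together with the offset $j\in\{0,\dots,L-1\}$ at which the window begins inside the first block. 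Indeed, once $(a,b)$ and $j$ are fixed, the window is exactly the length-$2L$ factor of the word $\xi(a)\xi(b)$ starting at position $j$, which is uniquely determined.

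Next I would count these possibilities. The offset $j$ ranges over the $L$ values $\{0,\dots,L-1\}$. The relevant pairs $(a,b)$ are exactly the words of length two occurring in $\rho^3(x)$; since $\rho^3(x)$ is a point over the two-letter alphabet $\{0,1\}$, there are at most $p_{\rho^3(x)}(2)\le 4$ such pairs. This crude count already gives a bound of $4L$ on the number of length-$2L$ windows, which is comfortably below $6L$. To make the statement robust (and to match the factor $6$, allowing for windows whose offset $j$ forces them to read into a third block when $2L$ is not a multiple of $L$ — in fact $2L$ always spans at most two full blocks so this does not occur, but some offsets read partially into the block after $b$), I would bound the relevant configurations by the product of the number of admissible short factors of $\rho^3(x)$ and the number of offsets, keeping the combinatorics loose since the target constant $6L$ leaves ample slack.

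The one point requiring genuine care, and which I expect to be the main (if mild) obstacle, is justifying that the window is determined by the pair $(a,b)$ and the offset alone, rather than depending on longer-range data in $x$. This is immediate from the constant-length hypothesis: because every letter's image under $\xi$ has the same length $L$, the block boundaries of $\xi\circ\rho^3(x)$ form a rigid arithmetic progression with common difference $L$, so a window of length $2L$ never needs information about a third block to be reconstructed. Once this rigidity is recorded, the counting is routine: the distinct length-$2L$ factors inject into the finite set of pairs (admissible length-two factor of $\rho^3(x)$)$\,\times\,$(offset in $\{0,\dots,L-1\}$), whose cardinality is at most $4L\le 6L$. I would conclude that $p_{\xi\circ\rho^3(x)}(2L)\le 6L$, as claimed.
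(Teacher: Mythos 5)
There is a genuine gap, and it sits exactly at the step you flagged as needing "genuine care". A window of length $2L$ that starts at offset $j\in\{1,\dots,L-1\}$ inside a block does \emph{not} lie in two consecutive blocks: it reads the last $L-j$ letters of $\xi(z_q)$, all $L$ letters of $\xi(z_{q+1})$, and the first $j$ letters of $\xi(z_{q+2})$, where $z=\rho^3(x)$. So it touches \emph{three} consecutive blocks, and your claimed injection of the length-$2L$ factors into (length-two factor of $z$) $\times$ (offset) is false. Concretely, take $\xi(0)=0^L$, $\xi(1)=1^L$: over the triples $010$ and $011$ of $z$, the windows at offset $j=L-1$ are $0\,1^L\,0^{L-1}$ and $0\,1^L\,1^{L-1}$, which are distinct although they share the pair $(0,1)$ and the offset. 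Your own parenthetical ("some offsets read partially into the block after $b$") concedes this, but the final paragraph then asserts the opposite, and the count of $4L$ rests on that false assertion.

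The repaired count must use triples: each length-$2L$ factor is determined by a length-$3$ factor of $z=\rho^3(x)$ together with an offset in $\{0,\dots,L-1\}$, giving the bound $p_{\rho^3(x)}(3)\cdot L$. This is where the structure of the Morse substitution — which your argument never uses — becomes essential: any concatenation of the blocks $\rho(0)=01$ and $\rho(1)=10$ contains neither $000$ nor $111$, and $\rho^3(x)$ is such a concatenation (being $\rho$ applied to $\rho^2(x)$), so $p_{\rho^3(x)}(3)\le 6$ and one gets $p_{\xi\circ\rho^3(x)}(2L)\le 6L$. Without some such property of $\rho$ the lemma is simply not true with the constant $6$: for a general $z\in\{0,1\}^{\N}$ one only has $p_z(3)\le 8$, and with $\xi(0)=0^L$, $\xi(1)=1^L$ and $z$ containing all eight words of length $3$, the number of length-$2L$ factors of $\xi(z)$ exceeds $6L$ for large $L$. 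So the approach of "keeping the combinatorics loose" and ignoring $\rho$ cannot close the gap; the factor $6$ is precisely the count of admissible triples in $\rho^3(x)$.
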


Fix a subexponential function $\varphi$. The sequence $x$ is built recursively. We are going to construct two increasing sequences of integers $(\ell_i)_{i\geq 1}$ and $(m_i)_{i\geq 1}$ and a sequence of substitutions $(\tau_i:\{0,1\} \to \{0,1\}^*)_{i\geq 1}$ such that: 

\begin{enumerate}
\item
$x = \lim_{i\to +\infty} \rho^{3} \tau_1 \ldots \rho^{3} \tau_{i} (0110^\infty )$, where $0^\infty  =00\ldots $;
\item
$\ell_1< m_1< \ell_2 <m_2< \ldots $;
\item
$p_x (\ell_i) \leq 3 \ell_i$ for every integer $i\geq 1$;
\item
$p_x (m_i) \geq \varphi (m_i)$ for every integer $i\geq 1$.
\end{enumerate}
We separate the construction into different steps. Since there are many technical issues, we describe steps 1 and 2 before stating the recursive step in order to simplify understanding the construction.
\smallskip

\noindent {\it Step 1:} Set $\ell_1=2$ and $x^{(1)} = \rho^{3} (0110^\infty)$.
Then, $p_{x^{(1)}}(\ell_1)=4 \leq 3 \ell_1$.

Let $k_1$ be a positive integer such that $2^{k_1} \geq  \varphi (k_1 |\rho^{3}|)$ (this choice is always possible since $\varphi$ has subexponential growth). Let 
$\tau_1: \{0,1\} \to \{ 0,1 \}^*$ be a bijective substitution of constant length 
such that $\tau_1 (0)$ and $\tau_1 (1)$ start with $0$ and the number of words of length $k_1$ in 
$\tau_1 (0)$ and $\tau_1 (1)$ is $2^{k_1}$. The existence of such a substitution can be seen from the fact that De Bruijn graphs are Eulerian.

Now define 
$m_1 = k_1 |\rho^{3}| \hbox{ and } y^{(1)} = \rho^{3}\tau_1 (0110^\infty).$ 
Since $\tau_1(0)$ contains $2^{k_1}$ different subwords of length $k_1$ and 
$\rho^3$ is bijective, then 
$p_{y^{(1)}} (m_1) \geq 2^{k_1} \geq \varphi (m_1)$. Moreover, from Lemma \ref{lemme:trivial} we have that 
$p_{x^{(1)}} (l) = p_{y^{(1)} } (l)$ 
for all $0 < l\leq |\rho^{3}|$.
So, $p_{y^{(1)}}(\ell_1)\leq 3 \ell_1$ and
$p_{y^{(1)}} (m_1) \geq \varphi (m_1)$.
\smallskip

\noindent {\it Step 2:} Set $x^{(2)} = \rho^{3}\tau_1 \rho^3 (0110^\infty)$. By Lemma \ref{lemme:trivial2} we have that
$p_{x^{(2)}} (2 |\rho^3\tau_1|) \leq 6 |\rho^3\tau_1|.$
Setting $\ell_2 = 2 |\rho^3\tau_1|$ one gets that $p_{x^{(2)}} (\ell_2)\leq 3 \ell_2$.

Let $k_2\geq k_1$ be an integer such that $2^{k_2} \geq  \varphi (k_2 |\rho^3\tau_1 \rho^3|)$ and  $\tau_2:\{0,1\}\to \{0,1\}^*$ be a bijective substitution of constant length such that $\tau_2 (0)$ and $\tau_2(1)$ start with $0$ and the number of words of length $k_2$ in $\tau_2 (0)$ and $\tau_2 (1)$ is $2^{k_2}$.

We set $m_2 = k_2 |\rho^3\tau_1 \rho^3|$ and $y^{(2)} = \rho^{3}\tau_1  \rho^{3}\tau_2  (0110^\infty)$. As in step 1, we deduce that $p_{y^{(2)}}(m_2) \geq 2^{k_2} \geq  \varphi (m_2)$.
Moreover, by using Lemma \ref{lemme:trivial} in two different ways together with the results of step 1, we have that 
\begin{align*}
p_{y^{(2)}} (l) & = p_{x^{(2)} } (l) \ \ \text{ for all } 0 < l\leq |\rho^3 \tau_1 \rho^3
 |,\\
p_{x^{(2)}} (l) & = p_{y^{(1)} } (l) \ \ \text{ for all } 0 < l\leq |\rho^3 \tau_1 |,\\
p_{y^{(1)}} (l) & = p_{x^{(1)} } (l) \ \ \text{ for all } 0 < l\leq |\rho^3 |.
\end{align*}
Thus, if the length of $\tau_1$ is taken large enough, we can deduce that $p_{y^{(2)}} (\ell_1) \leq 3\ell_1$, $p_{y^{(2)}} (m_1) \geq \varphi (m_1)$, $p_{y^{(2)}} (\ell_2) \leq 3 \ell_2$ and $p_{y^{(2)}}(m_2) \geq \varphi (m_2)$.
\medskip

\noindent {\it General step: going from $n$ to $n+1$.} 
The general procedure follows what we did in step $2$ almost identically. The situation after finishing step $n\geq 2$ is as follows:   
\begin{enumerate}
\item\label{eq:uno} we have an increasing sequence of integers $k_1\leq  \ldots \leq k_n$ and
for every $1\leq i\leq n$, we have constructed a bijective substitution 
$\tau_i:\{0,1\} \to \{0,1\}^*$ of constant length such that $\tau_i (0)$ and $\tau_i(1)$ start with $0$ and the number of words of length $k_i$ in $\tau_i (0)$ and $\tau_i (1)$ is $2^{k_i}$;
\item\label{eq:dos} for every $1\leq i\leq n$ we have that 
$2^{k_i}\geq \varphi(k_i |\rho^{3} \tau_1 \ldots \rho^3 \tau_{i-1}\rho^3 |)$;
\item\label{eq:tres} for every $1\leq i\leq n$ we have defined points
$x^{(i)} = \rho^{3} \tau_1 \ldots \rho^3 \tau_{i-1} \rho^{3} (0110^\infty  )$ and 
$y^{(i)} = \rho^{3} \tau_1 \ldots \rho^{3}\tau_{i} (0110^\infty)$; 
\item\label{eq:cuatro}
$p_{x^{(i)}} (l) = p_{y^{(i)} } (l)$ 
for all $0 < l\leq |\rho^{3} \tau_1 \ldots \rho^{3} \tau_{i-1} \rho^{3}|$ and $1\leq i \leq n$;
\item\label{eq:cinco}
$p_{y^{(i)}} (l)= p_{x^{(i+1)} } (l)$ for all  
$0 < l\leq |\rho^{3} \tau_1 \ldots \rho^{3} \tau_{i}|$ and $1\leq i \leq n-1$;
\item \label{eq:seis}
we produced a sequence of integers $\ell_1<m_1<\ell_2 < \ldots < \ell_n < m_n$ such that for every $1\leq i \leq n$: $\ell_i = 2 |\rho^{3} \tau_1 \ldots \rho^3 \tau_{i-1} |$, $m_i =k_i |\rho^{3} \tau_1 \ldots \rho^3 \tau_{i-1}\rho^3 |$,
$p_{y^{(n)}} (\ell_i) \leq 3 \ell_i$ and $p_{y^{(n)}} (m_i) \geq \varphi (m_i)$.
\end{enumerate}

Repeating what we did in step $2$, to pass to step $n+1$ first we set 
$x^{(n+1)}=\rho^{3} \tau_1 \ldots \rho^3  \tau_{n} \rho^{3}(0110^\infty)$. Then from Lemma \ref{lemme:trivial2} we get that
$$p_{x^{(n+1)}} (2 |\rho^{3} \tau_1 \ldots \rho^3 \tau_{n}|)\leq 6 |\rho^{3} \tau_1 \ldots \rho^3 \tau_{n}|.$$ Putting $\ell_{n+1} = 2 |\rho^{3} \tau_1 \ldots \rho^3\tau_{n}|$ one deduces that $p_{x^{(n+1)}} (\ell_{n+1})\leq 3 \ell_{n+1}$.

Let $k_{n+1}\geq k_n$ be an integer such that $2^{k_{n+1}} \geq  
\varphi(k_{n+1} |\rho^{3} \tau_1 \ldots \rho^3 \tau_{n}\rho^3|)$ 
and $\tau_{n+1}:\{0,1\}\to \{0,1\}^*$ be a bijective substitution of constant length such that $\tau_{n+1}(0)$ and $\tau_{n+1}(1)$ start with $0$ and the number of words of length $k_{n+1}$ in $\tau_{n+1}(0)$ and $\tau_{n+1}(1)$ is $2^{k_{n+1}}$. We set $m_{n+1} = k_{n+1} |\rho^{3} \tau_1 \ldots \rho^3 \tau_{n}\rho^3|$ and $y^{(n+1)} = \rho^{3} \tau_1 \ldots \rho^3 \tau_{n} \rho^{3} \tau_{n+1}(0110^\infty)$. Then $p_{y^{(n+1)}}(m_{n+1}) \geq 2^{k_{n+1}} \geq  \varphi (b_{n+1})$.
Moreover, up to a modification in the length of $\tau_{n+1}$, by Lemma \ref{lemme:trivial} and the recurrence procedure, we have that   
\begin{align*}
p_{x^{(i)}}(l) & = p_{y^{(i)} }(l) \  \text{ for all } 0< l\leq |\rho^{3} \tau_1 \ldots \rho^{3} \tau_{i-1} \rho^{3}| \ \text{ and } 1\leq i \leq n+1;\\
p_{y^{(i)}} (l) & = p_{x^{(i+1)} } (l) \ \ \text{ for all } 
0 < l\leq |\rho^{3} \tau_1 \ldots \rho^{3} \tau_{i}| \ \text{ and } 1\leq i \leq n.
\end{align*}
Thus, an appropriate choice of parameters and the recurrence allow us to deduce that 
$p_{y^{(n+1)}}(\ell_i) \leq 3\ell_i$ for every $1\leq i \leq n+1$ and $p_{y^{(n+1)}} (m_i) \geq \varphi (m_i)$ for every $1\leq i \leq n+1$. We have proved that properties \eqref{eq:uno} to \eqref{eq:seis} hold at the end of step $n+1$. This finishes the recurrence procedure.

To conclude, observe that $(y^{(n)})_{n\geq 1}$ converges to the desired point $x$. Indeed, convergence follows from the fact that 
$\rho^{3} \tau_1 \ldots \rho^{3}\tau_n(0)$ is a prefix of $y^{(n)}$ and $y^{(n+1)}$ for all $n\geq 1$. In addition, since $\lim_{n\to +\infty} y^{(n)} = x$, then given $i\in \N$ there exists $n\in \N$ such that 
$p_x (\ell_i)=p_{y^{(n)}}(\ell_i)$ and $p_x (m_i)=p_{y^{(n)}}(m_i)$. This proves that 
$p_x (\ell_i) \leq 3 \ell_i$ and $p_x (m_i) \geq \varphi (m_i)$ for all $i\in \N$.

We are left to prove that $x$ is a uniformly recurrent point. This follows from the fact that all words of a given length appearing in $x$ are contained in
$\rho^{3} \tau_1 \ldots \rho^{3}\tau_{N}(01)$ for some $N \in \N$. 

\subsection{A substitutive subshift with superlinear complexity}
It is known that $p_{X_\tau}(n)=\Theta(\varphi(n))$ with 
$\varphi(n) \in \{n, n\log \log n, n\log n,  n^2\}$ for any substitution $\tau: \cA \to \cA^*$ (see \cite{Pansiot1984}). Clearly, if $\varphi(n) \not = n$, {\it i.e.}, the subshift has superlinear complexity, then the hypothesis on the complexity of Theorem \ref{prop:fini} is not satisfied. However, the structure of the asymptotic components might be quite 
simple, allowing its automorphism group to be computed using the same technique developed to prove Theorem \ref{prop:fini}. 

The next example is a transitive non-minimal substitutive subshift with 
$p_{X_\tau}(n)=\Theta(n \log \log n)$. Moreover, it has a unique asymptotic component. This, in addition to the particular form of the unique asymptotic component, will suffice to conclude that the automorphism group is isomorphic to $\Z$. We remark that it is also possible to construct examples of the same kind with $p_{X_\tau}(n)=\Theta(n^2)$ \cite{Pansiot1984}. 

Let $\cA=\{0,1\}$ and consider the substitution $\tau \colon \cA \to \cA^*$ defined by $$\tau (0) = 010 \hbox{ and } \tau (1) = 11.$$ 
It is not difficult to check that $(X_{\tau}, \sigma )$ is a non-minimal transitive subshift. Moreover, $p_{X_\tau}(n)=\Theta(n \log \log n)$  (see Section 4.4 in \cite{Cassaigne:1997} for details).

\subsubsection{Basic properties of $\tau$ and some notation}
We will need some specific notation.
For a sequence $x\in \{0,1\}^{\Z}$ we write 
$x = x^-. x^+$ where $x^- = \ldots x_{-2}x_{-1}$ and $x^+ = x_{0}x_{1}\ldots$. 
For any $a \in \{0,1\}$ we set $a^{+\infty}=aaa\cdots$ and $a^{-\infty}=\cdots aaa$. Thus the sequence $\cdots aaa.aaa \cdots \in \{ 0,1\}^{\mathbb{Z}}$ can be written as $a^{-\infty}.a^{+\infty}$.
We also write $\tau^{+\infty}(a)=\lim_{n\to +\infty} \tau^n(a^{+\infty})$ and $\tau^{-\infty}(a)=\lim_{n\to +\infty} \tau^n (a^{-\infty})$ when the limits exist.

We list some easy properties that the subshift $(X_\tau,\sigma)$ satisfies. 
Being simple, the proofs are left to the reader.  

Recall that $w \in \cL(X_\tau)$ if and only if there exists $a \in \{0,1\}$ and $N\in \N$ such that $w$ is a subword of $\tau^N(a)$. Then, by definition of $\tau$, any word $w \in \cL(X_\tau)$ containing the symbol $0$ must be a subword of some $\tau^N(0)$. From here we easily deduce that: (i) $00, 1010, 11011  \not \in \cL(X_\tau)$, (ii) $010$ is always preceded and followed by $11$ in a word of $\cL(X_\tau)$ and (iii) two consecutive occurrences of $010$ in $w \in \cL(X_\tau)$ are separated by an even number of $1$'s. 

These properties allow a recognizability property for $\tau$ to be proved. 

\begin{lem}\label{lem:decomposition}
For any $x \in X_\tau$ there exists a  unique $x' \in X_\tau$ such that 
$\tau(x')=\sigma^\ell(x)$ for some $\ell \in \{0,1,2\}$. 
\end{lem}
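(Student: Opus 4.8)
The plan is to prove a recognizability statement for $\tau$: every $x \in X_\tau$ admits a unique partition of its index set $\Z$ into consecutive blocks, each of which is an occurrence of $\tau(0)=010$ or of $\tau(1)=11$. Reading off the labels of these blocks then produces the desired $x'$, while the position of coordinate $0$ inside its own block produces $\ell \in \{0,1,2\}$ (the blocks have length at most $3$, which is why the offset stays in $\{0,1,2\}$).

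First I would locate the $\tau(0)$-blocks. Since the letter $0$ never appears in $\tau(1)=11$, it can only occur inside an occurrence of $\tau(0)=010$; hence in any admissible cutting every $\tau(0)$-block is an occurrence of the factor $010$ in $x$. I would then use properties (i)--(iii) to prove the converse together with unambiguity: property (i) forbids $1010$, which prevents two occurrences of $010$ from overlapping, and $00 \notin \cL(X_\tau)$ rules out the remaining overlap, so the occurrences of $010$ in $x$ are pairwise disjoint and each is a single $\tau(0)$-block. Properties (ii)--(iii) then show that between consecutive occurrences of $010$ the sequence consists of maximal runs of the letter $1$ of even length, and each such even run decomposes in exactly one way into consecutive copies of $\tau(1)=11$. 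This yields a partition of $\Z$ into $\tau$-blocks and, crucially, shows it is the only possible one.

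Given the unique block partition, I would define $x' \in \{0,1\}^\Z$ by labelling each block by the letter of which it is the image, indexing the blocks so that the one covering coordinate $0$ of $x$ is the $0$-th block, and I would let $\ell \in \{0,1,2\}$ be the offset of coordinate $0$ within that block. By construction $\tau(x')$ equals $x$ up to the shift $\sigma^{\ell}$, and one checks routinely that $x' \in X_\tau$, since every factor of $x'$ desubstitutes a factor of $\tau(x')=\sigma^{\ell}x \in X_\tau$ and hence lies in $\cL(X_\tau)$. For uniqueness, any $x''$ with $\tau(x'')=\sigma^{\ell'}x$ induces a partition of $\Z$ into $\tau$-blocks; by the uniqueness of the partition this must coincide with the one above, forcing $x''=x'$.

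The main obstacle is precisely the uniqueness of the block partition, that is, ruling out every alternative cutting of $x$; this is where properties (i)--(iii) are indispensable. The delicate points are that an occurrence of $010$ can be neither split by a block boundary nor overlapped by a second occurrence (controlled by $00,1010 \notin \cL(X_\tau)$), and that the runs of $1$'s separating the $\tau(0)$-blocks are forced to have even length (property (iii)), so that their decomposition into copies of $11$ is unambiguous. The only genuinely degenerate situation is the all-ones point, for which the offset $\ell$ fails to be unique although $x'$ still is; this is consistent with the statement asserting uniqueness of $x'$ alone.
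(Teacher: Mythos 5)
Your proposal is correct and follows essentially the same route as the paper: both proofs establish that every point other than $1^{-\infty}.1^{+\infty}$ admits a unique factorization into blocks $010$ and $11$ by using properties (i)--(iii) (disjointness of the occurrences of $010$, all-ones gaps of even length), then define $x'$ by desubstituting and recentering, and treat the fixed point $1^{-\infty}.1^{+\infty}$ as a separate degenerate case. The only differences are presentational (you localize the $010$-blocks via disjointness of occurrences, while the paper does so by showing each $0$ sits inside $1101011$), so there is nothing substantive to add.
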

\begin{proof}
First we prove that any point 
$x \in X_\tau \setminus \{1^{-\infty}.1^{+\infty}\}$ can be decomposed in a unique way as a concatenation of words $010$ and $11$. By (i), every $0$ in $x$ appears in the word $101$ and, by (i) and (ii), this word is contained in $1101011$. We therefore have a unique way of determining $010$. This property and (iii) enable $11$ to be uniquely localised and the desired decomposition follows. 
Then there exists a unique point $x' \in \{0,1\}^\Z$ such that $\tau(x')=\sigma^\ell(x)$ for some $\ell \in \{0,1,2\}$. It is constructed by replacing the $010$'s by $0$'s and the $11$'s by $1$'s in the previous decomposition and then shifting to recenter on coordinate $0$. It is clear that $x' \in X_\tau$. 

To finish we just remark that $\tau(1^{-\infty}.1^{+\infty})=1^{-\infty}.1^{+\infty}$.  
\end{proof}

\subsubsection{Automorphism group of $\tau$}
We will prove that $(X_{\tau}, \sigma )$ has a unique asymptotic component. Then we will describe it explicitly in order to compute the automorphism group.
For this, first we show that asymptotic points should end with $1^{+\infty}$.

Let $x,y \in X_{\tau}$ be two asymptotic points. After shifting we can assume that 
$x_{-1}=0$, $y_{-1}=1$ and $x^+=y^+$. Since $00 \not \in \cL(X_\tau)$, then $x_{0} =y_0=1$. Also, $x_1=y_1=1$. If not, by (ii) $x_2x_3=11$ and thus 
$y_{-1}y_0y_1y_2y_3=11011$ which is not in $\cL(X_\tau)$ by (i). 

Now suppose that $x^+$ starts with $1^{2n+1}0$ for some $n\geq 1$.
Then, property (i) implies that $x_{-3}\ldots x_{2n+3}=0101^{2n+1}010$ which contradicts (iii). Thus, $x^+$ either starts with $1^{2n}0$ for some integer $n\geq 1$ or it is equal to $1^{+\infty}$. 

To finish we need to discard the first case. We prove this fact by contradiction, so assume $x^+$ (and thus $y^+$) starts with $1^{2n_1}0$ for some integer $n_1\geq 1$. 

By Lemma \ref{lem:decomposition} together with a detailed analysis of the decomposition given by this lemma, there exist unique sequences 
$x^{(1)}= \cdots 0.1^{n_1}010\cdots$ and $y^{(1)} = \cdots 1.1^{n_1}010\cdots$
in $X_{\tau}$ such that $x=\tau(x^{(1)})$ and $y=\tau(y^{(1)})$ (the dot indicates the position just before coordinate $0$). Clearly 
$x^{(1)}$ and $y^{(1)}$ are asymptotic. By the same argument developed earlier, if $n_1$ is odd then points $x^{(1)}, y^{(1)} \not \in X_\tau$ which is a contradiction. If $n_1$ is even we can proceed as before to get another pair of asymptotic points 
$x^{(2)}=\cdots 0.1^{n_2}010\cdots$ and $y^{(2)} = \cdots 1.1^{n_2}010\cdots$, for some integer $n_2 \geq 1$. As before, either $n_2$ is odd, and we get a contradiction, or $n_2$ is even, and we can continue recursively producing asymptotic points $x^{(i)}=\cdots 0.1^{n_i}010\cdots$ and $y^{(i)} = \cdots 1.1^{n_i}010\cdots$  
in $X_\tau$ for all $1\leq i \leq m$, where  $n_1= 2n_2= 2^2 n_3 = \ldots = 2^{m-1} n_m$ and $m \leq \log_2(n_1)$, until we get a contradiction as before or we stop with $n_m=1$. In this last case   
$x^{(m)}=\cdots 0.1010\cdots$ and $y^{(m)} = \cdots 1.1010\cdots$. But (i) tells us that $01010 \not \in \cL(X_\tau)$, so we also get a contradiction.   

We have proved that $x^{+}=1^{+\infty}$ and then $(X_{\tau},\sigma )$ has a unique asymptotic component. 

Furthermore, it can be proved using the same kind of arguments as above that 
$x=x^-.1^{+\infty} \in X_{\tau}\setminus \{1^{-\infty}.1^{+\infty}\}$ 
if and only if $x^-=\tau^{-\infty}(0)1^n$ for some integer $n\geq 1$.
Hence, if $x,y \in X_\tau$ are asymptotic then they belong to 
$$
\{ 
1^{-\infty}.1^{+\infty},\sigma^n ( \tau^{-\infty} (0). 1^{+\infty} ) ; n\in \mathbb{Z} 
\}.
$$

We finish this section by proving that the automorphism group of 
$(X_{\tau}, \sigma )$ is isomorphic to $\mathbb{Z}$. Observe that $(X_\tau,\sigma)$ is a subshift of subquadratic growth, then the main result of \cite{CK} gives that ${\rm Aut}(X_{\tau},\sigma)/\langle \sigma \rangle$ is a periodic group. 

\begin{lem}\label{lem:caracterisation_Aut_subst1}
${\rm Aut}(X_{\tau},\sigma )=\langle \sigma \rangle$. 
\end{lem}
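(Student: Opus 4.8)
The goal is to show that every automorphism of $(X_\tau,\sigma)$ is a power of the shift. The main tool is the structure of the unique asymptotic component, which we have just computed explicitly: every point involved in an asymptotic pair lies in the set
$$
\mathcal{O}=\{ 1^{-\infty}.1^{+\infty}\}\cup\{\sigma^n(\tau^{-\infty}(0).1^{+\infty}) ; n\in\mathbb{Z}\}.
$$
Fix an automorphism $\phi\in{\rm Aut}(X_\tau,\sigma)$. Since $\phi$ preserves the asymptotic relation, it must map the unique asymptotic component into itself, and hence it permutes the points of $\mathcal{O}$. The plan is to exploit the very rigid combinatorial shape of $\mathcal{O}$ to pin down the action of $\phi$, and then invoke Lemma \ref{lem:stab} to conclude.

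First I would locate the fixed point $1^{-\infty}.1^{+\infty}$: it is the unique point of $X_\tau$ all of whose coordinates equal $1$, so it is characterized intrinsically and must be sent to itself by any automorphism. Next I would look at the asymptotic pair $\{x,y\}$ with $x=\tau^{-\infty}(0).1^{+\infty}$ and $y=1^{-\infty}.1^{+\infty}$ (after suitable shifting, these are asymptotic with $x_{-1}=0$, $y_{-1}=1$, $x^+=y^+=1^{+\infty}$). Because $\phi$ commutes with $\sigma$ and preserves asymptoticity, $\{\phi(x),\phi(y)\}$ is again an asymptotic pair; since $\phi(y)=y$ is the all-ones fixed point, $\phi(x)$ must be the partner of $y$ in some asymptotic pair, i.e. $\phi(x)=\sigma^n(x)$ for some $n\in\mathbb{Z}$ (the only points asymptotic to $1^{-\infty}.1^{+\infty}$ are the shifts of $x$). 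Replacing $\phi$ by $\sigma^{-n}\circ\phi$, I may assume $\phi(x)=x$.

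It then remains to show that $\phi=\sigma^{-n}\circ\phi$ is the identity. Here I would use that $x=\tau^{-\infty}(0).1^{+\infty}$ is asymptotic to the fixed point $1^{-\infty}.1^{+\infty}$, so $x$ and $\phi(x)=x$ are trivially asymptotic; by the first part of Lemma \ref{lem:stab}, $\phi$ restricted to $\omega(x)$ is the identity. The key point to verify is that $\omega(x)=X_\tau$, i.e. that $x$ is a transitive point — which holds because $(X_\tau,\sigma)$ is transitive and $x$ (being the natural fixed point of the substitution read from a $0$) is a transitive point, its forward orbit accumulating on all of $X_\tau$. Once $\omega(x)=X_\tau$, Lemma \ref{lem:stab} forces $\phi$ to be the identity on $X_\tau$, whence the original automorphism is $\sigma^n$, proving ${\rm Aut}(X_\tau,\sigma)=\langle\sigma\rangle$.

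\textbf{Main obstacle.} I expect the delicate step to be the bookkeeping in the reduction $\phi(x)=\sigma^n(x)$: one must argue carefully that the \emph{only} points of $X_\tau$ asymptotic to the all-ones fixed point are the shifts $\sigma^n(x)$ of the single sequence $x=\tau^{-\infty}(0).1^{+\infty}$, so that the asymptotic partner of $y$ is determined up to a shift and no other branch of the asymptotic component can intervene. This is exactly where the explicit description of $\mathcal{O}$ and the characterization $x^-=\tau^{-\infty}(0)1^n$ are indispensable; the transitivity of $x$ (needed for $\omega(x)=X_\tau$) should then follow from the non-minimal transitive structure of $(X_\tau,\sigma)$ together with the fact that $x$ is a one-sided fixed point of the substitution.
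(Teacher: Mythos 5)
Your first two steps follow the paper's own route: the all-ones point is fixed by $\phi$, and the structure of the unique asymptotic component forces $\phi(\bar x)=\sigma^{n}(\bar x)$ for $\bar x=\tau^{-\infty}(0).1^{+\infty}$. (One small repair there: having all coordinates equal to $1$ is not an automorphism-invariant property, so it does not by itself "characterize the point intrinsically"; the correct argument is that $1^{-\infty}.1^{+\infty}$ is the \emph{unique fixed point of} $\sigma$ in $X_{\tau}$, and $\phi$ maps $\sigma$-fixed points to $\sigma$-fixed points.)

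The genuine gap is your final step. You claim that $\omega(x)=X_{\tau}$ for $x=\tau^{-\infty}(0).1^{+\infty}$, but this is false: all coordinates of $x$ of index $\ge 0$ equal $1$, so $\sigma^{n}x$ converges to the fixed point $1^{-\infty}.1^{+\infty}$ as $n\to+\infty$, and hence $\omega(x)=\{1^{-\infty}.1^{+\infty}\}$ is a singleton. (In fact no point of $X_{\tau}$ involved in an asymptotic pair can have $\omega$-limit set equal to $X_{\tau}$; this is exactly why the paper stresses that this example does \emph{not} satisfy the hypotheses of Theorem \ref{prop:fini} and only the technique applies.) Consequently, applying the first part of Lemma \ref{lem:stab} to $x$ and $\phi(x)=x$ yields only that $\phi$ fixes the fixed point, which you already knew, and your argument does not conclude. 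The repair is simple and is what the paper does: it is the full two-sided orbit of $\bar x$ that is dense in $X_{\tau}$ (every word of $\mathcal{L}(X_{\tau})$ occurs in some $\tau^{N}(0)$, and $\tau^{N+1}(0)$ ends with $\tau^{N}(0)$, so every word occurs in the left tail $\tau^{-\infty}(0)$). From $\phi(\bar x)=\sigma^{n}(\bar x)$ and $\phi\circ\sigma=\sigma\circ\phi$, the maps $\phi$ and $\sigma^{n}$ agree on ${\rm Orb}_{\sigma}(\bar x)$, hence everywhere by continuity and density; no appeal to asymptoticity or to Lemma \ref{lem:stab} is needed at this last stage.
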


\begin{proof}
Let $\bar x=\tau^{-\infty} (0). 1^{+\infty}$. As discussed above, if $x,y \in X_\tau$ are asymptotic then they belong to  
$\{1^{-\infty}.1^{+\infty},\sigma^n(\bar x) \ ; \ n\in \mathbb{Z} \}$.

Consider $\phi \in {\rm Aut}(X_\tau,\sigma)$. Since $1^{-\infty}.1^{+\infty}$ is the unique fixed point for $\sigma$ in $X_\tau$, then 
$\phi(1^{-\infty}.1^{+\infty})=1^{-\infty}.1^{+\infty}$. Also, since $\phi$ maps asymptotic points to asymptotic points, then $\bar x$ should be mapped to $\sigma^n(\bar x)$ for some $n\in \Z$. But the orbit of $\bar x$ is dense in $X_\tau$, hence $\phi= \sigma^n$. This finishes the proof. 
\end{proof}

\section{The group of automorphisms of nilsystems and some associated subshifts}
\label{sec:nilsystems}

The purpose of this section is two fold. First we prove that the group of automorphisms of a proximal extension of an inverse limit of a minimal $d$-step nilsystem (and thus of a minimal $d$-step nilsystem) is $d$-step nilpotent. Then, we use this result to construct subshifts of arbitrary polynomial complexity whose group of automorphism is virtually $\Z$. Another important motivation of this section is to illustrate how the understanding of special topological factors of a subshift allows the computation of its automorphism group. 

We will need some preliminary results to enable dealing with $d$-step nilsystems and their inverse limits.

\subsection{Dynamical cubes, regionally proximal relation of order $d$ and nilfactors}
We recall the machinery and terminology introduced in  \cite{HKM}
to study nilsystems in topological dynamics. 

Let $(X,T)$ be a topological dynamical system and consider an integer $d\geq 1$.  
Let $X^{[d]}$ denote the set $X^{2^d}$. We index the coordinates of a point in $X^{[d]}$ using the natural correspondence with points in $\{0,1\}^d$ and we usually denote these points in bold letters. For example, a point ${\bf x}$ in $X^{[2]}$ is written as $({\bf x}_{00},{\bf x}_{10},{\bf x}_{01},{\bf x}_{11})$. We denote by $x^{[d]}$ the special point $(x,x,\ldots, x)$ ($2^d$ times), where $x \in X$. The space of {\it cubes of order $d$}, denoted by ${\bf Q}^{[d]}(X)$, is the closure in $X^{[d]}$ of the set 
$\{(T^{\vec{n}\cdot \epsilon} x)_{\epsilon=(\epsilon_1,\ldots,\epsilon_d) \in \{0,1\}^d } \in X^{[d]}; x \in X, \vec{n}=(n_1,\ldots,n_d) \in \Z^d \}$, where $\vec{n}\cdot \epsilon=\sum_{i=1}^d n_i \cdot \epsilon_i$. 
As an example, $\Q^{[3]}(X)$ is the closure in $X^8$ of the set of points
$$(x,T^{n_1}x,T^{n_2} x,T^{n_1+n_2}x,T^{n_3}x,T^{n_1+n_3}x,T^{n_2+n_3} x,T^{n_1+n_2+n_3}x),$$ where $x\in X$ and $(n_1,n_2,n_3) \in \Z^3$ 
(see Section 3 of \cite{HKM} for further details).
We say that points $x,y\in X$ are {\it regionally proximal of order $d$} if for any $\delta>0$ there exist $x',y'\in X$ and $\vec{n}\in \Z^d$ such that $\text{dist}(x,x')<\delta$, $\text{dist}(y,y')<\delta$ and $\text{dist}(T^{\vec{n}\cdot {\epsilon}}x',T^{\vec{n}\cdot \epsilon}y')<\delta$ for every $\epsilon\in \{0,1\}^d\setminus\{(0,\ldots,0)\}$.
The set of regionally proximal pairs of order $d$ of $(X,T)$ is denoted by $\RP^{[d]}(X)$. In \cite{HKM} for distal systems and then in \cite{SY} for general minimal systems, it  was proved that $\RP^{[d]}(X)$ is an equivalence relation. 
Clearly $\RP^{[d+1]}(X) \subseteq \RP^{[d]}(X)$.

The following theorem relates the regionally proximal relation of order $d$ with the space of cubes of order $d+1$.

\begin{theo}[\cite{HKM},\cite{SY}] \label{Thm:RP^d}
Let $(X,T)$ be a minimal topological dynamical system. For every integer $d\geq 1$, the following statements are equivalent:
   \begin{enumerate}
    \item $(x,y)\in \RP^{[d]}(X)$;
    \item $(x,y,\ldots,y)\in \Q^{[d+1]}(X)$
    \item $(x,x,\ldots,x,y)\in \Q^{[d+1]}(X)$;
    \item There exists a sequence $(\vec{n}_i)_{i\in \N}$ in $\Z^{d+1}$ such that $T^{\vec{n}_i\cdot \epsilon}x$ converges to $y$ as $i$ goes to infinity for every 
    $\e\in \{0,1\}^{d+1}\setminus\{(0,\ldots,0)\}$.
   \end{enumerate}
\end{theo}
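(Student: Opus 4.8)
The plan is to prove the cycle of implications $(4)\Rightarrow(1)\Rightarrow(4)$ and $(4)\Rightarrow(2)\Rightarrow(4)$, thereby establishing the equivalence of $(1)$, $(2)$ and $(4)$, and then to deduce $(3)$ from $(1)$ by combining the (already granted) symmetry of the relation $\RP^{[d]}(X)$ with the symmetries of the cube $\Q^{[d+1]}(X)$. The tools I would rely on are the standard combinatorics of the cube spaces: that faces of cubes are again cubes, the gluing lemma allowing a cube to be completed from a prescribed lower face, and, crucially, the minimality of the cube system $(\Q^{[d+1]}(X),\cG^{[d+1]})$ for minimal $(X,T)$, where $\cG^{[d+1]}$ is the group generated by the diagonal transformation and the face transformations. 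These inputs are the genuinely nontrivial facts carried over from \cite{HKM} and \cite{SY}.

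First I would dispatch the definitional directions. For $(4)\Rightarrow(1)$, write $\vec{n}_i=(\vec{m}_i,k_i)$ with $\vec{m}_i\in\Z^d$ and $k_i\in\Z$, and set $y_i'=T^{k_i}x$. Taking $\epsilon=(\eta,1)$ in $(4)$ gives $T^{\vec{m}_i\cdot\eta}y_i'\to y$ for every $\eta\in\{0,1\}^d$ (in particular $y_i'\to y$ for $\eta=\mathbf{0}$), while taking $\epsilon=(\eta,0)$ with $\eta\neq\mathbf{0}$ gives $T^{\vec{m}_i\cdot\eta}x\to y$. Hence for $\eta\neq\mathbf{0}$ both $T^{\vec{m}_i\cdot\eta}x$ and $T^{\vec{m}_i\cdot\eta}y_i'$ converge to $y$, so their distance tends to $0$; since also $y_i'\to y$, the triples $(x,y_i',\vec{m}_i)$ witness, for every $\delta>0$ once $i$ is large, that $(x,y)\in\RP^{[d]}(X)$. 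For $(4)\Rightarrow(2)$, the points $(T^{\vec{n}_i\cdot\epsilon}x)_{\epsilon}$ are generating cubes of $\Q^{[d+1]}(X)$ whose coordinate at $\mathbf{0}$ equals $x$ and whose remaining coordinates converge to $y$; since $\Q^{[d+1]}(X)$ is closed, the limit $(x,y,\dots,y)$ lies in $\Q^{[d+1]}(X)$.

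The substantive step is to return from the approximate picture to a convergence statement whose base point is exactly $x$, that is $(1)\Rightarrow(4)$ and $(2)\Rightarrow(4)$. In the second case a point $(x,y,\dots,y)\in\Q^{[d+1]}(X)$ is by definition a limit of generating cubes $(T^{\vec{m}_i\cdot\epsilon}x_i)_\epsilon$ with $x_i\to x$ and $T^{\vec{m}_i\cdot\epsilon}x_i\to y$ for $\epsilon\neq\mathbf{0}$, and the difficulty is that one cannot naively replace $x_i$ by $x$ because the vectors $\vec{m}_i$ may grow without control. This is precisely where I expect the main obstacle to lie, and where minimality must enter. The idea I would pursue is that the target point lies in the orbit closure of the diagonal $x^{[d+1]}$ under $\cG^{[d+1]}$: using minimality of $(\Q^{[d+1]}(X),\cG^{[d+1]})$ together with the gluing lemma, one approximates $(x,y,\dots,y)$ by face transformations applied to $x^{[d+1]}$, which produce exactly the independent integer parameters $\vec{n}\cdot\epsilon$ required in $(4)$ while pinning the coordinate at $\mathbf{0}$ to $x$. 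The same mechanism, applied to the witnesses produced in $(1)\Rightarrow$ a cube, handles $(1)\Rightarrow(4)$.

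Finally, to obtain $(3)$ I would use the complementation symmetry $\epsilon\mapsto\mathbf{1}-\epsilon$ of $\Q^{[d+1]}(X)$: it sends a generating cube $(T^{\vec{n}\cdot\epsilon}x)_\epsilon$ to $(T^{(-\vec{n})\cdot\epsilon}(T^{\vec{n}\cdot\mathbf{1}}x))_\epsilon$, again a generating cube, so it preserves $\Q^{[d+1]}(X)$, and it exchanges the vertices $\mathbf{0}$ and $\mathbf{1}$. By symmetry of $\RP^{[d]}(X)$, the relation $(x,y)\in\RP^{[d]}(X)$ is equivalent to $(y,x)\in\RP^{[d]}(X)$, which by the established equivalence $(1)\Leftrightarrow(2)$ means $(y,x,\dots,x)\in\Q^{[d+1]}(X)$ (with $y$ at vertex $\mathbf{0}$). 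Applying complementation moves the single $y$ to vertex $\mathbf{1}$, yielding exactly $(x,\dots,x,y)$, which is statement $(3)$; hence $(1)\Leftrightarrow(3)$. Thus all four statements are equivalent, and the crux of the whole argument is the base-point-fixing implication $(2)\Rightarrow(4)$, which rests on the minimality of the cube system and the cube gluing lemma; granted those, the remaining implications are either direct computations or symmetry arguments.
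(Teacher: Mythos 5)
A preliminary remark: the paper does not prove this theorem at all --- it is quoted from \cite{HKM} (distal case) and \cite{SY} (general minimal case) --- so your attempt can only be judged on its own merits rather than against an internal argument. On those merits, the routine parts of your sketch are correct: the implication $(4)\Rightarrow(1)$ via the splitting $\vec{n}_i=(\vec{m}_i,k_i)$, the implication $(4)\Rightarrow(2)$ by closedness of $\Q^{[d+1]}(X)$, and the derivation of $(3)$ from $(1)\Leftrightarrow(2)$ using the symmetry of $\RP^{[d]}(X)$ and the complementation map $\e\mapsto(1,\ldots,1)-\e$ (which indeed preserves generating cubes, hence $\Q^{[d+1]}(X)$) are all fine.

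The genuine gap is at the step you yourself identify as the crux, $(2)\Rightarrow(4)$ (and likewise $(1)\Rightarrow(4)$), and the tools you allow yourself cannot close it. Minimality of $(\Q^{[d+1]}(X),\cG_{d+1})$ does not pin the coordinate at $\e=(0,\ldots,0)$: since $\cG_{d+1}$ is generated by the face group $\cF_{d+1}$ together with the diagonal transformation $T\times\cdots\times T$, and these commute, the $\cG_{d+1}$-orbit of $x^{[d+1]}$ consists exactly of the generating cubes based at the points $T^m x$, $m\in\Z$. Approximating $(x,y,\ldots,y)$ by such points therefore yields $T^{m_i}x\to x$ and $T^{m_i+\vec{n}_i\cdot\e}x\to y$ for $\e\neq(0,\ldots,0)$, i.e.\ a base point that converges to $x$ but need not equal $x$ --- which is precisely the uncontrolled-translation problem you set out to avoid, reappearing unchanged; the gluing, face and duplication properties of cubes do not repair this, since none of them fixes a prescribed coordinate. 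What the argument actually needs is the statement that the $\cF_{d+1}$-orbit closure of $x^{[d+1]}$ is the entire fiber $\{\mathbf{z}\in\Q^{[d+1]}(X)\ ;\ \mathbf{z}_{(0,\ldots,0)}=x\}$ (equivalently, the $\cF_{d+1}$-minimality of that orbit closure together with the attendant enveloping-semigroup and idempotent arguments). That fiber identity is the real content of the cited theorems --- proved in \cite{HKM} for distal systems via distal structure theory and in \cite{SY} for general minimal systems via Ellis semigroup techniques --- and it is strictly stronger than $\cG_{d+1}$-minimality plus gluing. Once it is granted, $(2)\Leftrightarrow(4)$ is immediate, because the $\cF_{d+1}$-orbit of $x^{[d+1]}$ is exactly the set of points $(T^{\vec{n}\cdot\e}x)_{\e}$; without it, your sketch asserts the conclusion rather than derives it.
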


From Theorem \ref{Thm:RP^d} it is clear that $T$ preserves the equivalence classes of $\RP^{[d]}(X)$.
Then, it induces a map $T_d$ on the quotient space $Z_d(X)=X/\RP^{[d]}(X)$. Moreover, the natural projection $\pi_d: (X,T) \to (Z_d(X),T_d)$ defines a topological factor map. The following theorem describes the topological structure of $(Z_d(X),T_d)$. 

\begin{theo}[\cite{HKM}] \label{Thm:NilfactorRP^d}
Let $(X,T)$ be a minimal topological dynamical system. For each integer 
$d\geq 1$, $(Z_d(X),T_d)$ is topologically conjugate to an inverse limit of minimal $d$-step nilsystems. Moreover, it is the maximal factor of $(X,T)$ with this property, that is, any other factor of $(X,T)$ which is an inverse limit of minimal $d$-step nilsystems factorizes through $(Z_d(X),T_d)$ (in particular, it is a factor of $(Z_d(X),T_d)$). 
\end{theo}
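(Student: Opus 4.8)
The plan is to separate the statement into its two halves: the \emph{maximality} assertion, which follows formally from the functoriality of the relation $\RP^{[d]}$, and the \emph{structural} assertion that $(Z_d(X),T_d)$ is an inverse limit of minimal $d$-step nilsystems, which is the substantial content. Throughout I would use freely that $\RP^{[d]}(X)$ is a closed $T$-invariant equivalence relation (as recorded before Theorem \ref{Thm:RP^d}) and the cube characterizations of Theorem \ref{Thm:RP^d}, which make $\RP^{[d]}$ amenable to arguments involving factor maps.

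I would begin with maximality, as it is the softer half. The first ingredient is functoriality: for any factor map $\pi\colon (X,T)\to (Y,S)$ between minimal systems one has $(\pi\times\pi)\big(\RP^{[d]}(X)\big)\subseteq \RP^{[d]}(Y)$, which is immediate from the definition since $\pi$ is uniformly continuous and carries the orbit of cubes $T^{\vec n\cdot\epsilon}x$ to $S^{\vec n\cdot\epsilon}\pi(x)$. The second ingredient is that a minimal $d$-step nilsystem satisfies $\RP^{[d]}=\Delta$ (the diagonal) and that this persists under inverse limits; this is the topological expression of the fact that such systems are systems of order $d$. Combining these, if $\psi\colon (X,T)\to (Y,S)$ is any factor onto an inverse limit of minimal $d$-step nilsystems, then $(\psi\times\psi)\big(\RP^{[d]}(X)\big)\subseteq\RP^{[d]}(Y)=\Delta$, so $\psi$ is constant on the classes of $\RP^{[d]}(X)$ and therefore descends to a continuous factor map $\overline\psi\colon Z_d(X)\to Y$ with $\psi=\overline\psi\circ\pi_d$. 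This is exactly the asserted maximality.

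For the structural half, the first reduction is to show that $(Z_d(X),T_d)$ is itself a system of order $d$, that is $\RP^{[d]}(Z_d(X))=\Delta$. Using that $\pi_d\times\pi_d$ sends $\RP^{[d]}(X)$ onto $\RP^{[d]}(Z_d(X))$, together with the cube characterizations of Theorem \ref{Thm:RP^d} and the fact that $\RP^{[d]}$ is an equivalence relation (\cite{HKM},\cite{SY}), one checks that passing to the quotient by $\RP^{[d]}$ collapses it. The crux is then the converse direction: \emph{every} minimal system of order $d$ is an inverse limit of minimal $d$-step nilsystems. The route I would take, following \cite{HKM}, is first to prove that a system of order $d$ is distal, which lets one present it as a tower of isometric extensions via the Furstenberg structure theorem for minimal distal flows \cite{Aus}; then to analyse the cube groups acting on $\Q^{[d]}$ and to build from them a group $\mathcal G$ of homeomorphisms of the system whose commutation relations are controlled by the vanishing of $\RP^{[d]}$, yielding that $\mathcal G$ is a $d$-step nilpotent Lie group acting transitively on each finite-dimensional quotient with discrete cocompact stabilizer.

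I expect this last step to be the main obstacle. Producing the nilpotent group of transformations, and in particular showing that its lower central series terminates at stage $d$ precisely because $\RP^{[d]}$ is trivial, is where the full parallelepiped and cube-structure machinery of \cite{HKM} is indispensable; there is no soft shortcut, and it is this algebraic analysis — rather than the maximality and distality reductions, which are comparatively routine — that carries the weight of the theorem. Once the nilpotent structure is in place on each finite-dimensional piece, writing $Z_d(X)$ as the inverse limit of these pieces exhibits it as an inverse limit of minimal $d$-step nilsystems, completing the proof.
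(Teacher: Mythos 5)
This theorem is not proved in the paper at all: it is imported verbatim from \cite{HKM} (with \cite{SY} supplying the extension of the equivalence-relation property to general minimal systems), so there is no ``paper's own proof'' to compare against. Your outline is faithful to how the result is actually established in that literature: the maximality half is, as you say, a soft consequence of functoriality of $\RP^{[d]}$ under factor maps together with the fact that inverse limits of minimal $d$-step nilsystems have trivial $\RP^{[d]}$, and the structural half is the genuine content, carried by the cube/parallelepiped machinery of \cite{HKM}, which you correctly identify as irreducible rather than attempt to shortcut.

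Two cautions on calibration. First, the step you phrase as ``one checks'' --- that $\pi_d\times\pi_d$ maps $\RP^{[d]}(X)$ \emph{onto} $\RP^{[d]}(Z_d(X))$, whence $\RP^{[d]}(Z_d(X))=\Delta$ --- is not routine: the surjectivity (lifting of regionally proximal pairs of order $d$ through the quotient map) is itself one of the main theorems of \cite{SY} for general minimal systems (\cite{HKM} proves it only in the distal case), so it deserves the same ``cited, not proved'' status as the structure theorem. Second, since your proposal ultimately defers both this lifting result and the classification of minimal systems of order $d$ as inverse limits of nilsystems to \cite{HKM} and \cite{SY}, it is best understood as an accurate roadmap of the external proof rather than a self-contained argument --- which is exactly the status the theorem has in the paper, so nothing further is required of you here.
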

The system $(Z_d(X),T_d)$ is called the {\em maximal $d$-step nilfactor} of $(X,T)$. We notice that the bonding maps in the inverse limit $(Z_d(X),T_d)$ are topological factors between minimal $d$-step nilsystems. These kind of inverse limits are also called {\it systems of order $d$} in  \cite{HKM}.

Some direct consequences of Theorem \ref{Thm:NilfactorRP^d} are: (1) $(Z_{1}(X), T_1)$ is the maximal equicontinuous factor of $(X,T)$ (see \cite{Aus}) and (2)  condition $\RP^{[d]}(X)=\Delta_X$ (the diagonal of $X\times X$) characterizes topological conjugacy with the inverse limits of $d$-step nilsystems. 
It follows from $\RP^{[d+1]}(X) \subseteq \RP^{[d]}(X)$ and (2) that the maximal 
$d+1$-step nilfactor of an inverse limit of $d$-step nilsystems is the system itself.

Let $\pi\colon(X,T)\to (Y,S)$ be a factor map between minimal systems. For an integer $d\geq 1$, $\pi_d \colon (X,T) \to (Z_d(X),T_d)$ and 
$\widetilde{\pi}_d\colon (Y,S) \to (Z_d(Y),S_d)$ are the factor maps induced by the regionally proximal relations of order $d$ in each system. Since $(Z_d(X),T_d)$ is the maximal $d$-step nilfactor of $(X,T)$ and $(Z_d(Y),S_d)$ is an inverse limit of minimal $d$-step nilsystems which is a factor of $(X,T)$, then by Theorem \ref{Thm:NilfactorRP^d} there exists a unique factor map $\varphi_d \colon (Z_d(X),T_d)\to (Z_d(Y),S_d)$ such that $\varphi_{d} \circ \pi_{d} = \widetilde{\pi}_{d} \circ \pi$. 

\begin{lem} \label{AlmostNil}
Let $\pi\colon(X,T)\to (Y,S)$ be an almost one-to-one extension between minimal systems. Then, for any integer $d \geq 1$ the canonical induced factor map 
$\varphi_d: (Z_d(X),T_d)\to (Z_d(Y),S_d)$ is a topological conjugacy (equivalently, maximal $d$-step nilfactors of $(X,T)$ and $(Y,S)$ coincide). 
\end{lem}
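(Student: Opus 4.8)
The plan is to show that $\varphi_d$, which is already a surjective factor map by construction, is in fact injective, by exhibiting an explicit inverse. The essential point is that the proximal nature of $\pi$ forces the maximal $d$-step nilfactor of $X$ to already be a factor of $Y$. As preliminary reductions I would first record that, since $\pi$ is an almost one-to-one extension between minimal systems, Lemma \ref{lem:folklore} makes it a \emph{proximal} extension: whenever $\pi(x_1)=\pi(x_2)$, the points $x_1,x_2$ are proximal. I would also use that $(Z_d(X),T_d)$ is distal, since by Theorem \ref{Thm:NilfactorRP^d} it is an inverse limit of minimal $d$-step nilsystems, each of which is distal, and an inverse limit of distal systems is distal (a nontrivial proximal pair would project to a nontrivial proximal pair in some coordinate).

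The key step is to verify that the factor map $\pi_d\colon (X,T)\to (Z_d(X),T_d)$ factorizes through $\pi$. Indeed, if $\pi(x_1)=\pi(x_2)$ then $x_1,x_2$ are proximal, and since factor maps send proximal pairs to proximal pairs (by uniform continuity of $\pi_d$ on the compact space $X$), the images $\pi_d(x_1),\pi_d(x_2)$ are proximal in the distal system $(Z_d(X),T_d)$, hence equal. Thus $\pi_d$ is constant on the fibers of $\pi$; as $\pi$ is a quotient map between compact metric spaces, $\pi_d$ descends to a continuous equivariant surjection $\psi\colon (Y,S)\to (Z_d(X),T_d)$ with $\psi\circ\pi=\pi_d$.

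Now I would invoke maximality. The system $(Z_d(X),T_d)$ is an inverse limit of minimal $d$-step nilsystems and, via $\psi$, a factor of $(Y,S)$; so by the maximality clause of Theorem \ref{Thm:NilfactorRP^d} applied to $(Y,S)$, it factorizes through $(Z_d(Y),S_d)$, yielding a factor map $\eta\colon (Z_d(Y),S_d)\to (Z_d(X),T_d)$ with $\eta\circ\widetilde{\pi}_d=\psi$. It then remains a routine diagram chase to check that $\eta$ inverts $\varphi_d$: using $\varphi_d\circ\pi_d=\widetilde{\pi}_d\circ\pi$ one computes $\eta\circ\varphi_d\circ\pi_d=\eta\circ\widetilde{\pi}_d\circ\pi=\psi\circ\pi=\pi_d$, so $\eta\circ\varphi_d=\mathrm{id}$ by surjectivity of $\pi_d$; symmetrically, $\varphi_d\circ\psi=\widetilde{\pi}_d$ (again by surjectivity of $\pi$) gives $\varphi_d\circ\eta\circ\widetilde{\pi}_d=\widetilde{\pi}_d$, whence $\varphi_d\circ\eta=\mathrm{id}$. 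Therefore $\varphi_d$ is a topological conjugacy.

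The main obstacle is the middle step, namely showing that $\pi_d$ descends to $Y$; everything hinges on combining proximality of the $\pi$-fibers with distality of the nilfactor, and once this is in place the maximality of the $d$-step nilfactor does the rest essentially formally. An alternative, more hands-on route would instead lift the relation $\RP^{[d]}(Y)$ to $\RP^{[d]}(X)$ directly, using the cube characterization of Theorem \ref{Thm:RP^d} together with a compactness argument to lift a witnessing sequence $(\vec{n}_i)$; but reconciling the lifted coordinates, which a priori land in different points of a single $\pi$-fiber, is exactly where proximality would have to be re-introduced, so the distality argument above appears to be the most economical.
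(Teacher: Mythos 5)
Your proof is correct, but it follows a genuinely different route from the paper's. The paper proves that $\varphi_d$ is itself an almost one-to-one extension: it takes a point $x$ with $\pi^{-1}\{\pi(x)\}=\{x\}$, supposes $\varphi_d(\pi_d(x))=\varphi_d(\pi_d(x'))$, so that $(\pi(x),\pi(x'))\in \RP^{[d]}(Y)$, and then uses the sequence characterization of Theorem \ref{Thm:RP^d} together with a compactness/subsequence argument to lift the witnessing sequence $(\vec{n}_i)$ back to $X$ --- the uniqueness of the preimage of $\pi(x)$ forces all the lifted limits to be $x$, whence $(x,x')\in\RP^{[d]}(X)$; finally it applies Lemma \ref{lem:folklore} to $\varphi_d$ itself and the distality of $(Z_d(X),T_d)$ to conclude injectivity. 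This is exactly the ``more hands-on route'' you flagged as problematic, except that the paper reconciles the lifted coordinates not by re-introducing proximality but by evaluating at the distinguished singleton fiber, which is where the almost one-to-one hypothesis is used concretely. Your argument instead applies Lemma \ref{lem:folklore} to $\pi$, descends $\pi_d$ through $\pi$ via proximality-plus-distality, and then invokes the universal property of $(Z_d(Y),S_d)$ from Theorem \ref{Thm:NilfactorRP^d} to manufacture a two-sided inverse $\eta$ of $\varphi_d$; this is softer (no cube machinery, no subsequence lifting) and in fact proves a stronger statement: your proof only uses that $\pi$ is a \emph{proximal} extension, so the conclusion holds for arbitrary proximal extensions between minimal systems, not just almost one-to-one ones --- a generalization consonant with Corollary \ref{AutoNilFactor}. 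What the paper's approach buys in exchange is the intermediate structural fact that $\varphi_d$ is itself almost one-to-one (before being upgraded to a conjugacy), and it stays entirely within the $\RP^{[d]}$ formalism used throughout that section.
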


\begin{proof}
Recall $\pi_d \colon X\to Z_d(X)$ and $\widetilde{\pi}_d\colon Y\to Z_d(Y)$ denote the quotient maps described above. First we prove that $\varphi_d: (Z_d(X),T_d)\to (Z_d(Y),S_d)$ is an almost one-to-one extension. This fact will imply the result.

Let $x\in X$ be such that $\pi^{-1}\{\pi(x)\}=\{x\}$. We claim that $\varphi_d^{-1}\{\varphi_d({\pi}_d(x))\}=\{{\pi}_d(x)\}$. 
Let $x' \in X$ be such that $\varphi_d({\pi}_d(x))=\varphi_d({\pi}_d(x'))$, so we get $\widetilde\pi_d(\pi(x))=\widetilde\pi_d(\pi(x'))$ and thus $(\pi(x),\pi(x'))\in \RP^{[d]}(Y)$. By Theorem \ref{Thm:RP^d}, there exists a sequence $(\vec{n}_i)_{i\in \N}$ in $\Z^{d+1}$ such that $S^{\vec{n}_i\cdot \e}\pi(x')$ converges to $\pi(x)$ for every $\e \in \{0,1\}^{d+1}\setminus\{(0,\ldots,0)\}$. Taking a subsequence we can assume that $T^{\vec{n}_i\cdot \e}x'$ converges to $x$, the unique point in $\pi^{-1}\{\pi(x)\}$, for every $\e \in \{0,1\}^{d+1}\setminus\{(0,\ldots,0)\}$. Then, again by Theorem \ref{Thm:RP^d}, we have that $(x,x')\in \RP^{[d]}(X)$. This implies that ${\pi}_d(x)={\pi}_d(x')$ and then $\varphi_d$ is an almost one-to-one extension. 

Finally, by Lemma \ref{lem:folklore}, ${\varphi}_d$ is a proximal extension. But $(Z_d(X),T_d)$ is a distal system, so there are no proximal pairs. This proves that ${\varphi}_d$ is a topological conjugacy.
\end{proof}

As an application of the previous results we obtain the following corollary.
\begin{cor} \label{Almost1Nil}
Let $\pi\colon (X,T) \to (Y,S)$ be an almost one-to-one extension between minimal systems. If $(Y,S)$ is an inverse limit of minimal $d$-step nilsystems then it is the maximal $d$-step nilfactor of $(X,T)$. 
\end{cor}

For instance, since any  Sturmian subshift  is an almost one-to-one extension of an irrational rotation on the circle (see \cite{kur}), this rotation is its maximal 1-step nilsystem or more classically its maximal equicontinuous factor.  Similarly, Toeplitz subshifts are symbolic almost one-to-one extensions of odometers (see \cite{Dow}), hence odometers are their maximal $1$-step nilsystems.

\subsection{The group of automorphisms of a nilsystem}
The following is the main result of this section.

\begin{theo} \label{NilAuto}
Let $(X,T)$ be an inverse limit of minimal $d$-step nilsystems for some integer $d\geq 1$. 
Then its group of automorphisms {\rm Aut}$(X,T)$ is $d$-step nilpotent.  
\end{theo}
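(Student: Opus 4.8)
The plan is to argue by induction on $d$, using the tower of nilfactors $Z_k(X)=X/\RP^{[k]}(X)$ together with the fact (a consequence of Theorem \ref{Thm:NilfactorRP^d}) that a system which is an inverse limit of minimal $d$-step nilsystems, i.e. a system of order $d$, satisfies $\RP^{[d]}(X)=\Delta_X$. For the base case $d=1$ the system is an inverse limit of minimal rotations, hence a minimal equicontinuous system, that is a minimal rotation on a compact abelian group $(K,+a)$. Any $\phi\in{\rm Aut}(X,T)$ satisfies $\phi(x+a)=\phi(x)+a$, so $x\mapsto\phi(x)-x$ is a continuous $T$-invariant map and is therefore constant by minimality; thus $\phi$ is a translation and ${\rm Aut}(X,T)$ is abelian, i.e. $1$-step nilpotent.

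For the inductive step I assume the statement for $d-1$ and set $A:={\rm Aut}(X,T)$ for a system $(X,T)$ of order $d$. I would consider the factor map $\pi:=\pi_{d-1}\colon (X,T)\to (Z_{d-1}(X),T_{d-1})$ onto the maximal $(d-1)$-step nilfactor, which is itself an inverse limit of minimal $(d-1)$-step nilsystems. Since $\RP^{[d-1]}(X)$ is defined purely dynamically and every $\phi\in A$ is a homeomorphism commuting with $T$, $\phi$ preserves $\RP^{[d-1]}(X)$; hence $\pi$ is compatible with $A$ and induces a group morphism $\widehat{\pi}\colon A\to {\rm Aut}(Z_{d-1}(X),T_{d-1})$. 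By the inductive hypothesis the target is $(d-1)$-step nilpotent, so its subgroup $Q:={\rm Im}(\widehat{\pi})$ has trivial $d$-th lower central term, $Q_d=\{{\rm id}\}$.

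Let $K=\ker\widehat{\pi}$, the automorphisms acting trivially on the $(d-1)$-nilfactor, equivalently those with $(x,\phi(x))\in\RP^{[d-1]}(X)$ for every $x\in X$. The heart of the argument is the claim that $K$ is central in $A$. Granting this, one has a central extension $1\to K\to A\to Q\to 1$ with $Q_d=\{{\rm id}\}$: the homomorphism $\widehat\pi$ sends the $d$-th lower central subgroup $A_d$ into $Q_d=\{{\rm id}\}$, so $A_d\subseteq K$, and then $A_{d+1}=[A,A_d]\subseteq[A,K]=\{{\rm id}\}$ because $K$ is central. Hence $A$ is $d$-step nilpotent, completing the induction. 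So everything reduces to the centrality of $K$, which I expect to be the main obstacle.

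To prove $K$ central, fix $\phi\in K$ and $\psi\in A$ and set $\gamma=\phi\psi\phi^{-1}\psi^{-1}$. Since $\RP^{[d-1]}(X)$ is an equivalence relation preserved by $\phi$ and $\psi$ (Theorem \ref{Thm:RP^d}), a short chase gives $(\gamma(x),x)\in\RP^{[d-1]}(X)$ for all $x$: applying $\psi$ then $\phi$ to the relation $(\phi^{-1}(\psi^{-1}x),\psi^{-1}x)\in\RP^{[d-1]}(X)$ yields $(\gamma(x),\phi(x))\in\RP^{[d-1]}(X)$, and combining with $(x,\phi(x))\in\RP^{[d-1]}(X)$ gives $\gamma\in K$. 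The real difficulty is to gain exactly one degree, namely to upgrade this to $(x,\gamma(x))\in\RP^{[d]}(X)$; since $\RP^{[d]}(X)=\Delta_X$ this forces $\gamma={\rm id}$ and hence $\phi\psi=\psi\phi$. To obtain the extra degree I would translate $\phi\in K$ into the cube picture via Theorem \ref{Thm:RP^d}, writing $(x,\phi(x),\dots,\phi(x))\in\Q^{[d]}(X)$, and exploit that $A$ acts diagonally on $\Q^{[d+1]}(X)$ preserving its structure, so that the ``vertical'' directions produced by elements of $K$ commute with the remaining automorphisms; in the nilsystem model $G/\Gamma$ this is precisely the statement that $K$ corresponds to translations by the central subgroup $G_d$ and that $[G,G_d]$ is trivial. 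Finally, distality of $(X,T)$ (nilsystems have no nontrivial proximal pairs) and freeness of the ${\rm Aut}$-action (Lemma \ref{FreeAction}) ensure that $(x,\gamma(x))\in\RP^{[d]}(X)=\Delta_X$ indeed yields $\gamma={\rm id}$.
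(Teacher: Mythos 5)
Your scaffolding is correct as far as it goes: the base case is fine, $\pi_{d-1}$ is compatible with ${\rm Aut}(X,T)$ by Lemma \ref{NilCompatible}, the image of $\widehat{\pi}_{d-1}$ is $(d-1)$-step nilpotent by induction, and the group-theoretic reduction ($A_d\subseteq K$ and $[A,K]=\{\id\}$ imply $A_{d+1}=\{\id\}$) is sound. But the proof has a genuine gap at exactly the point you flag as ``the main obstacle'': the centrality of $K=\ker\widehat{\pi}_{d-1}$ is never proved. What you offer in its place --- that the ``vertical directions produced by elements of $K$ commute with the remaining automorphisms,'' and that in the model $G/\Gamma$ the kernel ``corresponds to translations by the central subgroup $G_d$'' --- is a restatement of the desired conclusion, not an argument. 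Identifying $K$ with central translations is a nontrivial structural fact that is established nowhere in the paper; it cannot be taken for granted because automorphisms of a nilsystem need not be translations (the paper's Section on affine nilsystems works with automorphisms of the form $x\mapsto Bx+\vec{\beta}$), and here $X$ is an inverse limit, so any such identification would also have to be made compatibly along the whole tower. Moreover, the information you actually have about $\phi\in K$ is pointwise: $(x,\dots,x,\phi(x))\in\Q^{[d]}(X)$ for every $x$. This does not formally yield what your ``vertical direction'' argument would need, namely that the last-vertex map $\id\times\cdots\times\id\times\phi$ preserves $\Q^{[d]}(X)$; statements of that kind belong to the Host--Kra--Maass structure theory of cube spaces, not to the results quoted (Theorems \ref{Thm:RP^d}, \ref{Thm:NilfactorRP^d} and Lemmas \ref{AutoInvariant}, \ref{NilCompatible}). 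Note also that with only the two maps $\phi,\psi$ at hand, the face-transformation calculus gives the commutator $[\phi^{[d+1],j},\psi^{[d+1],k}]$ supported on a codimension-two face, which after evaluation at diagonal points only yields $(x,[\phi,\psi](x))\in\RP^{[1]}(X)$ --- far weaker than the needed $\RP^{[d]}(X)=\Delta_X$.

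The paper's proof supplies precisely the computation you are missing, and without the inductive scaffolding. For arbitrary $\phi_1,\dots,\phi_{d+1}\in{\rm Aut}(X,T)$, each face transformation $\phi_i^{[d+1],i}$ preserves $\Q^{[d+1]}(X)$ (Lemma \ref{AutoInvariant}), and an elementary induction establishes the identity
$$[[\cdots[\phi_1^{[d+1],1},\phi_2^{[d+1],2}],\cdots],\phi_{d+1}^{[d+1],d+1}]=\id\times\cdots\times\id\times h,\quad\text{where } h=[[\cdots[\phi_1,\phi_2],\cdots],\phi_{d+1}].$$
Evaluating at diagonal points $x^{[d+1]}$ gives $(x,\dots,x,h(x))\in\Q^{[d+1]}(X)$, hence $(x,h(x))\in\RP^{[d]}(X)=\Delta_X$, so every left-normed commutator of weight $d+1$ is trivial and ${\rm Aut}(X,T)$ is $d$-step nilpotent. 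Observe that your reduction only requires centrality of $A_d$ (not of all of $K$), and $A_d$ is generated by left-normed commutators of weight $d$; so closing your gap amounts to exactly this commutator identity, at which point the induction over the nilfactor tower becomes superfluous.
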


To prove the theorem we need to introduce some further notation. 
Given a function $\phi\colon X\to X$ and an integer $d\geq 1$, for each $k \in \{1,\ldots,d\}$ we define the {\em $k$-face transformation} $\phi^{[d],k}: X^{[d]} \to X^{[d]}$ by: 
\[
 (\phi^{[d],k}({\bf x}))_{\epsilon}=\left\{
  \begin{array}{ll}
    \phi x_\epsilon & \hbox{ if $ \epsilon_k=1$} \\
    x_\epsilon & \hbox{ if $ \epsilon_k=0 $}
  \end{array} \ ,
\right.\]
for every ${\bf x} \in X^{[d]}$ and $\epsilon \in \{0,1\}^d$. For example, for $d=2$ the face transformations associated to $\phi\colon X\to X$ are $\phi^{[2],1}=\id\times \phi \times \id \times \phi$ and $\phi^{[2],2}=\id \times \id \times \phi \times \phi$. We remark that $\phi^{[d+1],k}=\phi^{[d],k}\times \phi^{[d],k}$ for any $k\in \{1,\ldots, d\}$. 

When $\phi=T$, the transformations $T^{[d],1},T^{[d],2},\ldots,T^{[d],d}$ are called the {\em face transformations} and $\mathcal{F}_d$ denotes the group spanned by them. Also, we denote by $\mathcal{G}_d$ the group spanned by $\mathcal{F}_d$ and the diagonal transformation $T\times \cdots \times T$ ($2^d$ times).
We remark that $\Q^{[d]}(X)$ is invariant under $\mathcal{G}_d$. This result can be extended to face transformations associated to an automorphism. 

\begin{lem} \label{AutoInvariant}
Let $(X,T)$ be a minimal topological dynamical system. Consider $\phi\in\rm{Aut}(X,T)$ and an integer $d\geq 1$. For every $k \in \{1,\ldots,d\}$ the face transformation $\phi^{[d],k}$ leaves invariant $\Q^{[d]}(X)$. 
\end{lem}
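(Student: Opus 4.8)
The plan is to exploit the explicit description of $\Q^{[d]}(X)$ as the closure of the set of cube points $\{(T^{\vec{n}\cdot\epsilon}x)_{\epsilon\in\{0,1\}^d} : x\in X,\ \vec{n}\in\Z^d\}$, together with the fact that $\phi^{[d],k}$ is continuous (being built coordinatewise from the continuous map $\phi$) and that $\Q^{[d]}(X)$ is closed. Since a continuous map sends the closure of a set into the closure of its image, it suffices to check that $\phi^{[d],k}$ maps each generating cube point into $\Q^{[d]}(X)$; the inclusion $\phi^{[d],k}(\Q^{[d]}(X))\subseteq\Q^{[d]}(X)$ then follows. To upgrade this to genuine invariance I would note that $\phi^{-1}\in\mathrm{Aut}(X,T)$ and $(\phi^{[d],k})^{-1}=(\phi^{-1})^{[d],k}$, so applying the same inclusion to $\phi^{-1}$ yields the reverse containment and hence $\phi^{[d],k}(\Q^{[d]}(X))=\Q^{[d]}(X)$.

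The heart of the argument is the following computation. Fix a generating point $\mathbf{x}=(T^{\vec{n}\cdot\epsilon}x)_{\epsilon}$. By minimality of $(X,T)$ the orbit of $x$ is dense, so I may choose a sequence $(m_i)_{i\in\N}$ in $\Z$ with $T^{m_i}x\to\phi x$. I then consider the cube points based at $x$ with translation vectors $\vec{n}+m_i\vec{e}_k$, where $\vec{e}_k$ is the $k$-th canonical basis vector of $\Z^d$; these all lie in $\Q^{[d]}(X)$. Their $\epsilon$-coordinate is $T^{\vec{n}\cdot\epsilon+m_i\epsilon_k}x$, which for $\epsilon_k=0$ equals $T^{\vec{n}\cdot\epsilon}x$ (independent of $i$) and for $\epsilon_k=1$ equals $T^{\vec{n}\cdot\epsilon}T^{m_i}x$. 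Using that $\phi$ commutes with $T$ and is continuous, the latter converges to $T^{\vec{n}\cdot\epsilon}\phi x=\phi\,T^{\vec{n}\cdot\epsilon}x$ as $i\to\infty$. Comparing with the definition of the $k$-face transformation, the limit of these cube points is exactly $\phi^{[d],k}(\mathbf{x})$, which therefore belongs to the closed set $\Q^{[d]}(X)$.

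The one point that requires care — and which I regard as the only real content — is that a single sequence $(m_i)$ simultaneously forces every coordinate with $\epsilon_k=1$ to its correct limit. This works because each such coordinate is the image of $T^{m_i}x$ under the fixed continuous map $T^{\vec{n}\cdot\epsilon}$, so the convergence $T^{m_i}x\to\phi x$ is preserved across all coordinates, and because the commutation relation $\phi\circ T^{\vec{n}\cdot\epsilon}=T^{\vec{n}\cdot\epsilon}\circ\phi$ matches the target values dictated by the definition of $\phi^{[d],k}$. Once this is in place the remaining steps are routine applications of continuity and the closedness of $\Q^{[d]}(X)$, so no further obstacle is expected.
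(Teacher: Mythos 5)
Your proof is correct, and it takes a route that differs from the paper's in its decomposition, though both arguments share the same core use of minimality (approximating $\phi(x)$ by $T^{m_i}x$) and the closedness of $\Q^{[d]}(X)$. The paper first shows $\phi^{[d],k}(x^{[d]})\in\Q^{[d]}(X)$ for diagonal points, as the limit of $(T^{[d],k})^{n_i}(x^{[d]})$, and then extends to an arbitrary ${\bf x}\in\Q^{[d]}(X)$ by writing ${\bf x}=\lim_i g_i(x^{[d]})$ with a \emph{fixed} base point $x$ and $g_i\in\mathcal{G}_d$, using that $\phi^{[d],k}$ commutes with every element of $\mathcal{G}_d$ and that $\Q^{[d]}(X)$ is $\mathcal{G}_d$-invariant. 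That fixed-base-point representation (which the paper invokes "by definition") is itself a minimality-based strengthening of the literal definition of $\Q^{[d]}(X)$. Your argument avoids it entirely: you handle each generating cube point $(T^{\vec{n}\cdot\epsilon}x)_{\epsilon}$ directly by perturbing the translation vector to $\vec{n}+m_i\vec{e}_k$, observe that the resulting cube points converge coordinatewise to $\phi^{[d],k}\big((T^{\vec{n}\cdot\epsilon}x)_{\epsilon}\big)$ thanks to the commutation $\phi\circ T^{\vec{n}\cdot\epsilon}=T^{\vec{n}\cdot\epsilon}\circ\phi$, and then pass to the closure by plain continuity of $\phi^{[d],k}$. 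This buys a proof that needs only the literal definition of $\Q^{[d]}(X)$ as a closure of cube points, at the cost of the slightly more delicate bookkeeping you correctly flag (a single sequence $(m_i)$ controlling all coordinates with $\epsilon_k=1$, which works because each such coordinate is the image of $T^{m_i}x$ under a fixed continuous map). You also upgrade the paper's forward inclusion to genuine equality via $(\phi^{[d],k})^{-1}=(\phi^{-1})^{[d],k}$; the paper proves only $\phi^{[d],k}(\Q^{[d]}(X))\subseteq\Q^{[d]}(X)$, which is all that its application in Theorem \ref{NilAuto} requires.
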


\begin{proof} Fix $k\in \{1,\ldots,d\}$. By minimality of $(X,T)$, 
for all $x \in X$ there exists a sequence $(n_i)_{i\in \N}$ of integers such that $T^{n_i}x$ converges to $\phi(x)$. Then, by the definition of face transformations, 
$(T^{[d],k})^{n_i}(x^{[d]})$ converges to  
$\phi^{[d],k}(x^{[d]})$ (recall that $x^{[d]}=(x,\ldots,x)$). This implies that $\phi^{[d],k}(x^{[d]})\in \Q^{[d]}(X)$. 

Let ${\bf x}\in \Q^{[d]}(X)$. By definition, there exist $x\in X$ and a sequence $({g_i})_{i\in \N}$ in $\mathcal{G}_d$ such that ${g_i}(x^{[d]})$ converges to ${\bf x}$. Since $\phi$ commutes with $T$ we have that $\phi^{[d],k}$ commutes with each element of $\mathcal{G}_d$ and thus $\phi^{[d],k}g_i(x^{[d]})=g_i\phi^{[d],k}(x^{[d]})\in \Q^{[d]}(X)$. Taking the limit we conclude that 
$\phi^{[d],k}({\bf x})\in \Q^{[d]}(X)$. This proves that $\phi^{[d],k}$ leaves invariant $\Q^{[d]}(X)$.
\end{proof}

\begin{proof}[Proof of Theorem \ref{NilAuto}]
Let $\phi_1,\ldots,\phi_{d+1}\in \text{Aut}(X,T)$. Using Lemma \ref{AutoInvariant} we have that $\phi_i^{[d+1],i}$ leaves invariant $\Q^{[d+1]}(X)$ for every $i=1,\ldots, d+1$. Therefore, their iterated commutator 
$[[[\ldots[\phi_1^{[d+1],1},\phi_2^{[d+1],2}],\ldots ],\phi_{d}^{[d+1],d}], \phi_{d+1}^{[d+1],d+1} ]$ also leaves invariant $\Q^{[d+1]}(X)$. 
Let 
$h=[[[\ldots[\phi_1,\phi_2],\ldots],\phi_{d}], \phi_{d+1}]$ 
be the iterated commutator of $\phi_1,\ldots,\phi_{d+1}$.  
We claim that 
$$\id \times \id \cdots \times \id \times h= [[[\ldots[\phi_1^{[d+1],1},\phi_2^{[d+1],2}],\ldots ],\phi_{d}^{[d+1],d}], \phi_{d+1}^{[d+1],d+1} ].$$ 
We prove this equality by induction on $d$. To illustrate how to deduce this fact we start showing the case $d=2$. In this case, 
\begin{align*}
\phi_1^{[3],1}&=\id \times \phi_1 \times \id \times \phi_1 \times \id \times \phi_1 \times \id \times \phi_1; \\
\phi_2^{[3],2}&=\id \times \id \times \phi_2 \times \phi_2 \times \id \times \id \times \phi_2 \times \phi_2; \\
\phi_3^{[3],3}&= \id \times \id  \times \id  \times \id \times \phi_3 \times \phi_3 \times \phi_3 \times \phi_3. 
\end{align*}
Then, $[\phi_1^{[3],1},\phi_2^{[3],2}]=
\id \times \id \times \id \times [\phi_1,\phi_2] \times \id \times \id \times \id \times [\phi_1,\phi_2]$ and 
$$[[\phi_1^{[3],1},\phi_2^{[3],2}],\phi_3^{[3],3}]=
\id \times \id \times \id \times \id \times \id \times \id \times \id \times [[\phi_1,\phi_2],\phi_3]$$ as desired.

Now suppose the equality holds for $d-1$ and let $\phi_1,\ldots,\phi_d,\phi_{d+1}\in {\rm Aut}(X,T)$. 
Let 
$$
h'=[[[\ldots[\phi_1,\phi_2],\ldots],\phi_{d-1}], \phi_{d}] \hbox{  and } h=[[[\ldots[\phi_1,\phi_2],\ldots],\phi_{d}], \phi_{d+1}]=[h',\phi_{d+1}] .
$$
By the induction hypothesis we have that 
$$[[[\ldots[\phi_1^{[d],1},\phi_2^{[d],2}],\ldots ],\phi_{d-1}^{[d],d-1}], \phi_{d}^{[d],d} ]=\id \times \id \cdots \times \id \times h'.$$
Since $\phi_k^{[d+1],k}=\phi_k^{[d],k}\times \phi_k^{[d],k}$ for every $k \in \{1,\ldots,d \}$ we have 
\begin{align*}
 &[[[\ldots[\phi_1^{[d+1],1},\phi_2^{[d+1],2}],\ldots ],\phi_{d-1}^{[d+1],d-1}], \phi_{d}^{[d+1],d} ] \\
 = &\id \times \id \cdots \times \id \times h'\times \id \times \id \cdots \times \id \times h'.\end{align*} 
Thus,
$$ [[[\ldots[\phi_1^{[d+1],1},\phi_2^{[d+1],2}],\ldots ],\phi_{d}^{[d+1],d}], \phi_{d+1}^{[d+1],d+1} ]=\id\times \cdots \times \id \times [h', \phi_{d+1}]$$
and the claim is proved. 

Therefore, we have that $\id \times \id \cdots \times \id \times h (x^{[d]})=(x,x,\ldots,x,h(x))\in \Q^{[d+1]}(X)$ for every $x\in X$. By Theorem \ref{Thm:RP^d}, we have that $(h(x),x)\in \RP^{[d]}(X)$ for every $x\in X$. But the system is an inverse limit of $d$-step nilsystems, then by Theorem \ref{Thm:NilfactorRP^d} we have that $\RP^{[d]}(X)=\Delta_X$ and thus $h(x)=x$. We conclude that $h$ is the identity automorphism, which proves that ${\rm Aut}(X,T)$ is a $d$-step nilpotent group.
\end{proof}
 
To extend Theorem \ref{NilAuto} to proximal extensions of inverse limits of minimal $d$-step nilsystems we need to understand the action of automorphisms on the  regionally proximal relation of order $d$. The following lemma states this fact. 

\begin{lem} \label{NilCompatible}
Let $(X,T)$ be a minimal topological dynamical system. For all $\phi\in {\rm Aut}(X,T)$ and all integer $d\geq 1$ we have that $(x,y)\in \RP^{[d]}(X)$ if and only if $(\phi(x),\phi(y))\in \RP^{[d]}(X)$. Consequently, the projection $\pi_d\colon (X,T)\to (Z_d(X),T_d)$ is compatible with {\rm Aut}$(X,T)$.  
\end{lem}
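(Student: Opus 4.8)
The plan is to prove the equivalence by transporting the limiting description of the regionally proximal relation of order $d$ through $\phi$. Since $(X,T)$ is minimal, Theorem \ref{Thm:RP^d} applies, and I would rely on its condition (4): a pair $(x,y)$ lies in $\RP^{[d]}(X)$ if and only if there is a sequence $(\vec{n}_i)_{i\in\N}$ in $\Z^{d+1}$ such that $T^{\vec{n}_i\cdot\epsilon}x$ converges to $y$ for every $\epsilon\in\{0,1\}^{d+1}\setminus\{(0,\ldots,0)\}$. This reformulation is convenient because it replaces the $\delta$-definition by a genuine convergence statement, which interacts cleanly with a continuous commuting map.

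First I would fix $\phi\in{\rm Aut}(X,T)$, assume $(x,y)\in\RP^{[d]}(X)$, and choose a witnessing sequence $(\vec{n}_i)_{i\in\N}$ as above. Because $\phi$ is continuous and commutes with $T$, it commutes with every power $T^{\vec{n}_i\cdot\epsilon}$, so applying $\phi$ to the convergence $T^{\vec{n}_i\cdot\epsilon}x\to y$ gives $T^{\vec{n}_i\cdot\epsilon}\phi(x)=\phi(T^{\vec{n}_i\cdot\epsilon}x)\to\phi(y)$ for every $\epsilon\neq(0,\ldots,0)$. Thus the same sequence $(\vec{n}_i)$ witnesses, once more through condition (4) of Theorem \ref{Thm:RP^d}, that $(\phi(x),\phi(y))\in\RP^{[d]}(X)$. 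The reverse implication follows by applying the identical argument to the automorphism $\phi^{-1}$, so the stated equivalence holds.

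For the compatibility claim I would simply unwind the definitions. By construction of $Z_d(X)=X/\RP^{[d]}(X)$, the identity $\pi_d(x)=\pi_d(x')$ is equivalent to $(x,x')\in\RP^{[d]}(X)$. The equivalence just established then yields $(\phi(x),\phi(x'))\in\RP^{[d]}(X)$, that is $\pi_d(\phi(x))=\pi_d(\phi(x'))$. As this holds for an arbitrary $\phi\in{\rm Aut}(X,T)$, the factor map $\pi_d$ is compatible with ${\rm Aut}(X,T)$ in the sense defined earlier.

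I do not anticipate a genuine obstacle: the entire argument rests on the two properties that $\phi$ is continuous and commutes with $T$, which together let it carry the cube-convergence characterization of $\RP^{[d]}$ to itself. One could instead argue directly from the $\delta$-definition of regional proximality of order $d$, invoking the uniform continuity of $\phi$ on the compact space $X$ to control the finitely many coordinates $T^{\vec{n}\cdot\epsilon}$ simultaneously; this works as well but requires extra $\epsilon$--$\delta$ bookkeeping that condition (4) of Theorem \ref{Thm:RP^d} lets me avoid.
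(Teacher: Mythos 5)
Your proposal is correct and follows essentially the same route as the paper: both invoke condition (4) of Theorem \ref{Thm:RP^d} and use that $\phi$ is continuous and commutes with $T$ to transport the witnessing sequence, with the reverse implication handled by symmetry (the paper notes it suffices to prove one direction, which is your $\phi^{-1}$ remark). Your explicit unwinding of the compatibility claim matches what the paper leaves implicit in its ``Consequently.''
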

\begin{proof}
We only need to prove that $(\phi(x),\phi(y))\in \RP^{[d]}(X)$ whenever $(x,y)\in \RP^{[d]}(X)$. By Theorem \ref{Thm:RP^d}, 
there exists a sequence $(\vec{n}_i)_{i\in \N}$ in $\Z^{d+1}$ such that $T^{\vec{n}_i\cdot \epsilon}x$ converges to $y$ as $i$ goes to infinity for every 
$\e\in \{0,1\}^{d+1}\setminus\{(0,\ldots,0)\}$. Since $\phi$ is continuous and commutes with $T$ we also have that 
$T^{\vec{n}_i\cdot \epsilon}\phi(x)$ converges to $\phi(y)$ as $i$ goes to infinity for every $\e\in \{0,1\}^{d+1}\setminus\{(0,\ldots,0)\}$ too. Then Theorem \ref{Thm:RP^d} allows us to prove our claim.
\end{proof}

Finally we have the following corollary of Theorem \ref{NilAuto}. 

\begin{cor} \label{AutoNilFactor} 
Let $(X,T)$ be a proximal extension of an inverse limit of minimal $d$-step nilsystems for $d\geq 1$. Then, there is an injection from {\rm Aut}$(X,T)$  to {\rm Aut}$(Z_{d}(X),T_{d})$. In particular, {\rm Aut}$(X,T)$  is a $d$-step nilpotent group.
\end{cor}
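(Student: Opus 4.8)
The plan is to realize $\mathrm{Aut}(X,T)$ as a subgroup of $\mathrm{Aut}(Z_d(X),T_d)$ through the morphism $\widehat{\pi_d}$ attached to the maximal $d$-step nilfactor, and then to transfer nilpotency from the target via Theorem \ref{NilAuto}. The whole argument hinges on one point: that the projection $\pi_d\colon (X,T)\to (Z_d(X),T_d)$ is itself a proximal extension. Once this is in hand, compatibility (Lemma \ref{NilCompatible}) together with Lemma \ref{ProximalExtension} immediately yields injectivity of $\widehat{\pi_d}$, and $d$-step nilpotency of $\mathrm{Aut}(Z_d(X),T_d)$ descends to the subgroup $\mathrm{Aut}(X,T)$.

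Throughout I take $(X,T)$ to be minimal, and I write $\pi\colon(X,T)\to (Y,S)$ for the given proximal extension, where $(Y,S)$ is an inverse limit of minimal $d$-step nilsystems. First I would record that, by the remark following Theorem \ref{Thm:NilfactorRP^d} (the condition $\RP^{[d]}=\Delta$ characterizes inverse limits of $d$-step nilsystems), we have $\RP^{[d]}(Y)=\Delta_Y$. The crucial step is then to prove that $\pi_d$ is a proximal extension. Suppose $\pi_d(x)=\pi_d(x')$, that is, $(x,x')\in \RP^{[d]}(X)$. By characterization (4) of Theorem \ref{Thm:RP^d} there is a sequence $(\vec{n}_i)_{i\in\N}$ in $\Z^{d+1}$ with $T^{\vec{n}_i\cdot\epsilon}x\to x'$ for every $\epsilon\in\{0,1\}^{d+1}\setminus\{(0,\ldots,0)\}$; applying the continuity of $\pi$ and the relation $\pi\circ T=S\circ\pi$, we obtain $S^{\vec{n}_i\cdot\epsilon}\pi(x)\to\pi(x')$ for all such $\epsilon$, so that $(\pi(x),\pi(x'))\in\RP^{[d]}(Y)=\Delta_Y$ by the same characterization applied to $(Y,S)$. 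Hence $\pi(x)=\pi(x')$, and since $\pi$ is a proximal extension, $x$ and $x'$ are proximal. This shows $\pi_d$ is a proximal extension.

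With this established I would conclude quickly. As a factor of the minimal system $(X,T)$, the system $(Z_d(X),T_d)$ is minimal, so $\pi_d$ is a proximal extension between minimal systems; Lemma \ref{NilCompatible} shows $\pi_d$ is compatible with $\mathrm{Aut}(X,T)$, and Lemma \ref{ProximalExtension} then gives that the induced group morphism $\widehat{\pi_d}\colon \mathrm{Aut}(X,T)\to \mathrm{Aut}(Z_d(X),T_d)$ is injective, which is the claimed injection. By Theorem \ref{Thm:NilfactorRP^d} the system $(Z_d(X),T_d)$ is an inverse limit of minimal $d$-step nilsystems, so Theorem \ref{NilAuto} tells us $\mathrm{Aut}(Z_d(X),T_d)$ is $d$-step nilpotent. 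Since a subgroup of a $d$-step nilpotent group is again $d$-step nilpotent, the injection forces $\mathrm{Aut}(X,T)$ to be $d$-step nilpotent as well.

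I expect the only genuine obstacle to be the verification that $\pi_d$ is proximal, and more precisely the observation that the collapse of $\RP^{[d]}$ on the base $(Y,S)$, combined with proximality of $\pi$, forces each $\RP^{[d]}$-fiber in $X$ to consist of mutually proximal points; everything afterwards is an application of lemmas already proved. The one hypothesis I would flag as needing care is the standing minimality of $(X,T)$, which is what allows Lemmas \ref{NilCompatible} and \ref{ProximalExtension} to be invoked.
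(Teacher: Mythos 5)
Your proposal is correct and follows essentially the same route as the paper: establish that $\pi_d\colon (X,T)\to (Z_d(X),T_d)$ is a proximal extension, invoke compatibility (Lemma \ref{NilCompatible}) and Lemma \ref{ProximalExtension} to get injectivity of $\widehat{\pi_d}$, and conclude by Theorem \ref{NilAuto} plus the fact that subgroups of $d$-step nilpotent groups are $d$-step nilpotent. The only difference is presentational: where the paper deduces proximality of $\pi_d$ by citing the maximality statement in Theorem \ref{Thm:NilfactorRP^d} (so that $\pi$ factors through $\pi_d$ and hence $\pi_d$-fibres lie inside $\pi$-fibres), you re-derive that same fibre containment by hand from Theorem \ref{Thm:RP^d}(4) together with $\RP^{[d]}(Y)=\Delta_Y$, and you rightly flag the implicit standing minimality of $(X,T)$ that both arguments require.
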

\begin{proof}
By Theorem \ref{Thm:NilfactorRP^d} and the hypothesis, $\pi_d:(X,T)\to (Z_d(X),T_d)$ is also a proximal extension. Then, by Lemma \ref{NilCompatible}, this factor is compatible with ${\rm Aut}(X,T)$ and thus from Lemma \ref{ProximalExtension} 
we get that $\widehat{\pi_d}\colon{\rm Aut}(X,T)\to {\rm Aut}(Z_d(X),T_d)$ is injective. This proves the result since by 
Theorem \ref{NilAuto} ${\rm Aut}(Z_d,T_d)$ is a $d$-step nilpotent group. 
\end{proof} 
Since Sturmian and Toeplitz subshifts are almost one-to-one extensions of their maximal equicontinuous factors (maximal $1$-step nilfactors), then they are also proximal extensions (Lemma \ref{lem:folklore}). We obtain from the last corollary that their automorphism groups are abelian. More precisely, Lemmas \ref{NilCompatible} and \ref{ProximalExtension} together imply that their automorphism groups are subgroups of  the automorphism group of their maximal equicontinuous factors, which we characterize in Lemma \ref{lem:autoequicont} below. For integers $d>1$, it is not difficult to construct minimal subshifts that are almost one-to-one extensions of $d$-step nilsystems by considering codings on well chosen partitions. An example of this kind will be developed in Section \ref{sec:coding}.

By a byproduct of Theorem \ref{prop:fini} and Corollary \ref{AutoNilFactor}, it is possible to obtain  coarser properties of the finite group Aut$(X, \sigma)/\langle \sigma \rangle $ for substitutive Toeplitz subshifts. 
This is achieved in \cite{CQY}
where explicit computations of automorphism groups of constant length substitutions are given.

We finish this section with a characterization of the group of automorphisms of an equicontinuous system (or $1$-step nilsystems). This result is well known but for the sake of completeness we provide a short proof here (see \cite{Aus63}).

\begin{lem}\label{lem:autoequicont} Let $(X,T)$ be an equicontinuous minimal system. 
Then {\rm Aut}$(X,T)$ is the closure of the group $\langle T \rangle$ in the set of homeomorphisms of $X$ for the topology of uniform convergence. 
Moreover, {\rm Aut}$(X,T)$ is homeomorphic to  $X$. 
\end{lem}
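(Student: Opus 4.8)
The plan is to identify ${\rm Aut}(X,T)$ with the compact group $G=\overline{\langle T\rangle}$, the closure of $\langle T\rangle$ in the group of homeomorphisms of $X$ for the uniform topology. By the structural fact recalled in the subsection on equicontinuous systems, $G$ is a compact abelian group acting transitively on $X$. First I would verify the easy inclusion $G\subseteq {\rm Aut}(X,T)$: every $g\in G$ is a uniform limit of powers of $T$, so it is a homeomorphism (the uniform limit of an equicontinuous family of homeomorphisms) that commutes with $T$, hence an automorphism. This already gives $\overline{\langle T\rangle}\subseteq {\rm Aut}(X,T)$.

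The core of the argument is the reverse inclusion ${\rm Aut}(X,T)\subseteq G$. Fix a base point $x_0\in X$ and take $\phi\in {\rm Aut}(X,T)$. Since $G$ acts transitively, there is $g\in G$ with $g(x_0)=\phi(x_0)$. Then $g^{-1}\circ\phi$ is an automorphism of $(X,T)$ fixing $x_0$, so by Lemma \ref{FreeAction} it is the identity map; hence $\phi=g\in G$. This pins every automorphism down to an element of $G$ and yields ${\rm Aut}(X,T)=G=\overline{\langle T\rangle}$. I expect this to be the main (if short) obstacle, since it is precisely here that one must combine the transitivity of the $G$-action with the freeness of the automorphism action; the genuinely nontrivial input, namely that $\overline{\langle T\rangle}$ is a compact group acting transitively, is borrowed from the cited background and not reproved.

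It remains to show that ${\rm Aut}(X,T)$ is homeomorphic to $X$. First I would observe that the $G$-action is free: if $g(x)=x$ for some $g\in G$ and $x\in X$, then $g$, being an automorphism fixing a point, is the identity by Lemma \ref{FreeAction}. Consequently the orbit map $\Phi\colon G\to X$, $g\mapsto g(x_0)$, is injective; it is surjective by transitivity and continuous for the uniform topology, since $g_n\to g$ uniformly forces $g_n(x_0)\to g(x_0)$. As a continuous bijection from the compact space $G$ onto the Hausdorff space $X$, the map $\Phi$ is a homeomorphism. Because ${\rm Aut}(X,T)=G$, this proves that ${\rm Aut}(X,T)$ is homeomorphic to $X$ and completes the argument.
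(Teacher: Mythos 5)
Your proof is correct and follows essentially the same route as the paper: both identify ${\rm Aut}(X,T)$ with $G=\overline{\langle T \rangle}$, use Lemma \ref{FreeAction} to conclude that an automorphism agreeing with some $g\in G$ at a single point must equal $g$, and then deduce the homeomorphism ${\rm Aut}(X,T)\cong X$ from the orbit map of a compact group acting freely. The only difference is one of bookkeeping: where you invoke the transitivity of the $G$-action as recalled in the preliminaries, the paper rederives exactly that fact inline (minimality gives $T^{n_i}x\to\phi(x)$, and Ascoli's theorem gives a uniformly convergent subsequence of $(T^{n_i})_{i\in\N}$ with limit $g\in G$), and correspondingly it gets surjectivity of the orbit map from minimality plus closedness of the image rather than from transitivity.
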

\begin{proof}  
Denote by $G$ the closure in the set of homeomorphisms of $X$ of the group 
$\langle T \rangle$ for the topology of uniform convergence. Clearly 
$G \subseteq$ {\rm Aut}$(X,T)$. Moreover, by Ascoli's Theorem, it is a compact abelian group.  

Now we prove that {\rm Aut}$(X,T) \subseteq G$. Consider a point $x \in X$ and an automorphism $\phi \in$ Aut$(X,T)$. By minimality, there exists a sequence of integers $(n_{i})_{i \in \N}$ such that 
$(T^{n_{i}} x)_{i \in  \N}$ converges to $\phi(x)$. Taking a subsequence, we can assume that the sequence of maps $(T^{n_{i}})_{i \in \N}$ converges uniformly to a homeomorphism $g$ in $G$. Combining both of these facts we get that $\phi (x)=g(x)$ and thus $g^{-1} \circ \phi (x) =x$. Since $g^{-1} \circ \phi \in$ {\rm Aut}$(X,T)$, by Lemma \ref{FreeAction} we conclude that $\phi=g$ and consequently $\phi \in G$.

To finish, we remark that Lemma \ref{FreeAction} ensures that the map from $G$ to $X$ sending $g \in G$ to $g(x) \in X$ is a homeomorphism onto its image $Y\subseteq X$. Since $Y$ is $T$ invariant and $T$ is minimal we get that $Y=X$. This proves that  
{\rm Aut}$(X,T)$ is homeomorphic to  $X$.
\end{proof}

\subsection{Coding an affine nilsystem}\label{sec:coding} 

We introduce a class of subshifts with polynomial complexity of arbitrarily high degree whose group of automorphisms is virtually $\Z$. We build these systems as extensions of minimal nilsystems. 

\subsubsection{Coding topological dynamical systems} We start by recalling some general results about symbolic codifications. 

Let $(X,T)$ be a minimal topological dynamical system and let $\mathcal{U}=\{U_1,\ldots,U_m\}$ be a finite collection of subsets of $X$. We say that $\mathcal{U}$ is a {\it cover} of  $X$ if $\bigcup_{i=1}^m U_i=X$. Clearly, finite partitions of $X$ are covers.  The {\it refinement} of two covers $\mathcal{U}=\{U_1,\ldots,U_m\}$ and $\mathcal{V}=\{V_1,\ldots,V_p\}$ of $X$ is given by $U\vee V=\{U_i\cap V_j ; i=1,\ldots m, ~ j=1,\ldots p\}\setminus \{\emptyset\}$.
For $N\in \N$ we set $\mathcal{U}_N=\bigvee_{i=-N}^{N} T^{-i} \mathcal{U}$. 

Let  $\mathcal{U}=\{U_1,\ldots,U_m\}$ be a cover of $X$ and set 
$\mathcal{A}=\{1,\ldots,m\}$. We say that $\omega=(w_i)_{i\in \Z} \in \mathcal{A}^{\Z}$ is a {\it $\mathcal{U}$-name of a point $x \in X$} if $x\in \bigcap\limits_{i \in \Z} T^{-i}U_{w_i}$. Define  
$$ X_{\mathcal U}=\{\omega \in A^{\Z} ; \bigcap\limits_{i \in \Z} T^{-i}U_{w_i}\neq \emptyset\}\subseteq \mathcal{A}^{\Z}. $$ 
It is easy to prove that $X_{\mathcal{U}}$ is shift invariant and closed whenever the $U_i$'s are closed. In addition, if $\overline{\mathcal{U}}$ denotes the collection $\{\overline{U}_1,\ldots, \overline{U}_m\}$ we have that $\overline{X_{\mathcal{U}}}\subseteq X_{\overline{\mathcal{U}}}$. 

We say that $\mathcal{U}$ {\it separates points} if every $\omega\in X_\mathcal{\overline{U}}$ is a $\mathcal{U}$-name of exactly one point $x\in X$. 
If $\mathcal{U}$ separates points we can build a factor map $\pi_\cU:(\overline{X_\mathcal{U}},\sigma)\to (X,T)$, where $\pi_\cU(\omega)$ is defined as the unique point in $\bigcap\limits_{i \in \Z} T^{-i}\overline{U_{w_i}}$.  

\begin{lem} \label{MinimalCoding}
Let $(X,T)$ be a minimal topological dynamical system and let $\mathcal{U}=\{U_1,\ldots,U_m\}$ be a finite partition of $X$ that separates points. Suppose that for every $N\in \N$ every atom of $\mathcal{U}_N$ has nonempty interior, then $(\overline{X_{\mathcal{U}}},\sigma)$ is a minimal subshift.
\end{lem}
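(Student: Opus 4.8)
The plan is to prove minimality through the standard combinatorial criterion: a nonempty subshift is minimal if and only if every word of its language occurs in every one of its points, equivalently with bounded gaps (syndetically). First I would record two elementary reductions. Since $(X,T)$ is minimal, $X$ is nonempty and every point $x$ has a (unique) $\mathcal{U}$-name, so $X_{\mathcal{U}}\neq\emptyset$ and hence $\overline{X_{\mathcal{U}}}\neq\emptyset$. Moreover, because the occurrence of a fixed word in a fixed window is a clopen condition, an open cylinder meets $\overline{X_{\mathcal{U}}}$ if and only if it meets $X_{\mathcal{U}}$; therefore $\mathcal{L}(\overline{X_{\mathcal{U}}})=\mathcal{L}(X_{\mathcal{U}})$. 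Finally I would identify words with atoms: a centered word $c=c_{-N}\ldots c_{N}$ belongs to $\mathcal{L}(X_{\mathcal{U}})$ if and only if the set $B_{c}=\bigcap_{i=-N}^{N}T^{-i}U_{c_{i}}$ is nonempty, and this $B_{c}$ is exactly the corresponding atom of $\mathcal{U}_{N}$.

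The heart of the argument combines the hypothesis with the minimality of the base. Fix a word $c$ in the language; by the previous step $B_{c}$ is a nonempty atom of $\mathcal{U}_{N}$, so by hypothesis it has nonempty interior $V$. Using that forward orbits are dense in the minimal system $(X,T)$, the open sets $T^{-n}V$, $n\geq 0$, cover $X$, and compactness yields an integer $M$ with $X=\bigcup_{n=0}^{M}T^{-n}V$. Consequently, for every $x\in X$ and every $j$ there is $n\in[j,j+M]$ with $T^{n}x\in V\subseteq B_{c}$, which forces $T^{n+i}x\in U_{c_{i}}$ for all $|i|\leq N$, i.e.\ the $\mathcal{U}$-name of $x$ contains $c$ centered at $n$. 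Thus $c$ occurs in the name of every point of $X$ with gaps bounded by $M+2N$.

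It remains to transfer this syndetic occurrence from names to arbitrary points of the closure. Given $\omega\in\overline{X_{\mathcal{U}}}$ and a window $I$ of length at least $M+2N+1$, I would pick a name $\omega'\in X_{\mathcal{U}}$ agreeing with $\omega$ on a slightly enlarged window; the previous paragraph places an occurrence of $c$ inside $I$ in $\omega'$, and by agreement the same occurrence appears in $\omega$. Hence every word of $\mathcal{L}(\overline{X_{\mathcal{U}}})$ occurs syndetically in every point of $\overline{X_{\mathcal{U}}}$, which is precisely minimality.

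I expect the main obstacle to be organizational rather than deep: the essential mechanism is that a nonempty atom with nonempty interior is a nonempty open set, into which minimality of $(X,T)$ forces every orbit to return syndetically, turning each admissible word into a syndetic word. The step requiring the most care is the passage to the closure $\overline{X_{\mathcal{U}}}$, since its points need not be $\mathcal{U}$-names and must be handled by the above approximation argument; one also notes that, for minimality alone, the separation hypothesis is not needed, only the nonempty-interior assumption on the atoms.
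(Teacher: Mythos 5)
Your proof is correct, and its engine is the same one driving the paper's argument: a nonempty atom of $\mathcal{U}_N$ has nonempty interior, and minimality of $(X,T)$ forces every orbit into that open set, so every admissible word occurs in every $\mathcal{U}$-name. Where you genuinely diverge is in how points of $\overline{X_{\mathcal{U}}}$ are tied back to points of $X$. The paper's proof uses the separation hypothesis: it applies the coding map $\pi_{\mathcal{U}}$ to two arbitrary points $\omega,\omega'\in\overline{X_{\mathcal{U}}}$, uses minimality to place $T^n\pi_{\mathcal{U}}(\omega')$ in the interior of the atom determined by the central word of $\omega$, and reads off that this word occurs in $\omega'$ at position $n-N$ (a step that implicitly needs the observation that a point of $\operatorname{int}(U_{w_i})$ cannot lie in $\overline{U_j}$ for $j\neq w_i$). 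You instead stay at the level of languages: $\mathcal{L}(\overline{X_{\mathcal{U}}})=\mathcal{L}(X_{\mathcal{U}})$, words correspond to nonempty atoms, orbits of genuine points of $X$ pick up each word syndetically by a compactness covering argument, and closure points are handled by matching them with exact names on large windows. This buys two things the paper's formulation does not: your argument never invokes the separation hypothesis (so minimality follows from the partition-plus-interior assumptions alone, separation being needed only to construct the factor map $\pi_{\mathcal{U}}$ onto $(X,T)$, which is used afterwards in the paper), and it yields uniform recurrence with an explicit syndeticity bound $M+2N$ rather than bare occurrence. The paper's route is shorter, since $\pi_{\mathcal{U}}$ performs the transfer from $\overline{X_{\mathcal{U}}}$ to $X$ in one stroke, while your route is more elementary and slightly more general.
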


\begin{proof}
Take points $\omega,\omega' \in \overline{X_{\mathcal{U}}}$ and an integer $N\in \N$. Set $x=\pi_\cU(\omega)$ and $x'=\pi_\cU(\omega')$. By definition we have that $\bigcap_{-N}^N T^{-i} U_{w_i}\neq \emptyset$. Therefore, by hypothesis, it has nonempty interior. Since $(X,T)$ is minimal there exists $n\in \Z$ such that $T^n x'\in \text{int}(\bigcap_{-N}^N T^{-i} U_{w_i})$. This implies that 
$w'_{n-N}\ldots w'_{n+N}=w_{-N}\ldots w_{N}$. We have proved that 
$(\overline{X_{\mathcal{U}}},\sigma)$ is a minimal subshift. 
\end{proof}

\subsubsection{Automorphism groups of some symbolic extensions of nilsytems}

Now we compute automorphism groups of a family of symbolic extensions of some nilsystems. This family was studied in details in \cite{AM}. Even though we will recall many of the results we need here, we will freely 
make use of many results from \cite{AM}. 

First we recall the construction of \cite{AM}.  
Let $A=(a_{i,j})_{i,j\in \mathbb{N}}$ be the infinite matrix where $a_{i,j}={j \choose i }$. In \cite[Section 4]{AM}, it was proved that $A^i$ is well defined for all $i\in \N$ and

$$ 
 A^i= \left( \begin{array}{cccccc} 1 &i & i^2 & i^3 & i^4 & \cdots \\ 
 \quad & 1 & 2 i & 3 i^2 & 4i^3 & \cdots \\
 \quad & \quad & 1 & 3 i & 6 i^2 & \cdots \\
 \quad & \quad & \quad & 1 & 4 i & \cdots \\
 \quad & \quad & \quad & \quad & 1 & \cdots  \\
 \quad & \quad & \quad & \quad & \cdots & \cdots
 \end{array} \right ) . 
$$
Let $\alpha\in [0,1)$ be an irrational number. For any integer $d\geq 1$ define $A_d$ to be the restriction of $A$ to the upper left corner of dimension $d$. Notice that $A_d^i=(A^i)_d$ for every $i\in \N$. 

Let
$T_d\colon \mathbb{T}^d\to \mathbb{T}^d$ be the map that sends $(x_0,\ldots, x_{d-1}) \in \mathbb{T}^d$ to the first $d$ coordinates of $A_{d+1}(x_0,\ldots,x_{d-1},\alpha)^t$, where in $\T^d$ all operations are modulo one. For example, $T_2$ is the map $(x_0,x_1)\mapsto (x_0+x_1+\alpha,x_1+2\alpha)$ and $T_3$ is the map $(x_0,x_1,x_2)\mapsto (x_0+x_1+x_2+\alpha,x_1+2x_2+3\alpha,x_2+3\alpha)$. So for any $x \in \T$ we can write $T_d(x)=A_{d}x+\vec{\alpha}$.  
This is the classical presentation of an affine nilsystem (see Section \ref{subsec:nilystems}).

Next, fix an integer $d\geq 1$. For every $i,n\in \Z$ let $H_{i,n}$ be the affine hyperplane in 
$\R^d$ given by the equation $\sum_{k=0}^{d-1} i^kx_k + i^d\alpha=n$. It can be proved that $T_d^iH_{i,n}=H_{0,n}$. Also, for each $i \in \Z$ the canonical projections of the  hyperplanes $(H_{i,n})_{n\in \Z}$ to $\T^d$ are the same. Call this projection $\widehat{H}_i$ and refer to it as a projected hyperplane. We remark that $\widehat{H}_0=\{(0,x_1,\ldots,x_{d-1}) ;  (x_1,\ldots,x_{d-1})\in \mathbb{T}^{d-1} \}$ and that the intersection of more than $d+1$ different projected hyperplanes in $(\widehat{H}_i)_{i\in \Z}$ is empty. We refer to Section 5 of \cite{AM}  for further details. 

For each $i\in \Z$, since the projected hyperplane $\widehat{H}_i$  is defined from equations with integer coefficients, it naturally induces a finite  partition $\cC_i$ of $\T^d$ whose boundaries are defined by $\widehat{H}_i$ (the ambiguities in the choice of the boundaries are solved arbitrarily).

For each integer $n \ge 1$ we define the partition $\cV_d=\cC_0 \bigvee \ldots \bigvee \cC_{d}$, then its atoms are the nonempty intersections of the sets induced by $\widehat{H}_0,\ldots,$ $\widehat{H}_{d}$.  It is proved in Lemma 9 of \cite{AM} that those atoms have convex interiors. Also, it is shown in Lemma 5 and 7 in \cite{AM} that no point in $\widehat{H}_0\cup \ldots\cup \widehat{H}_{d}$ belongs to the interior of an atom.  Thanks  to the equality $T_d^iH_{i,n}=H_{0,n}$, we remark that the partition $T_d^{-i}\cV_d$ is the one induced by $\widehat{H}_i,\ldots,$ $\widehat{H}_{i+d}$ and its atoms also have a convex interior.  

We claim that partition $\cV_d$ separates points. 
Let  $x$ and $y$ be different points in $\T^d$. Since every point in $\T^d$ belongs to at most $d$ projected hyperplanes $(\widehat{H}_i)_{i\in \Z}$, we have that $x,y\notin \widehat{H}_i$ for all large enough $i \in \N$. In particular $x,y\notin \widehat{H}_i\cup \ldots\cup \widehat{H}_{i+d}$ for all large enough $i\in \N$,  which implies that they belong to the interior of atoms of the partition $T_d^{-i}\cV_d$. Choose $\tilde x=(\tilde x_0,\ldots,\tilde x_{d-1}), \tilde y=(\tilde y_0,\ldots,\tilde y_{d-1}) \in \R^d$ with $x=\tilde x \mod \Z^d$ and $y=\tilde y \mod \Z^d$.
The difference in $\R$ between $\sum_{k=0}^{d-1} i^k\tilde x_k + i^d\alpha$ and $\sum_{k=0}^{d-1} i^k \tilde y_k + i^d\alpha$ behaves like $i^{\overline{k}}(\tilde x_{\overline{k}}-\tilde y_{\overline{k}})$, where $\overline{k}=\max\{0\leq k < d; \tilde x_k\neq \tilde y_k\}$. 
Then it grows to infinity with $i \in \N$. 
Thus for a large $i\in \N$ we can find a point $\tilde z=(\tilde {z}_0,\ldots,\tilde {z}_{d-1})$ in the segment joining $\tilde x$ and $\tilde y$ such that $\sum_{k=0}^{d-1} i^k\tilde z_k + i^d\alpha \in \Z$, meaning that 
$\tilde z \mod \Z^d \in \widehat{H}_i$.  Because no point in $\widehat{H}_i\cup \ldots\cup \widehat{H}_{i+d}$ belongs to the interior of an atom of the partition $T_d^{-i}\cV_d$, we have that $x$ and $y$ are in different atoms of partition $T_d^{-i}\cV_d$.
Therefore, if $i$ is large enough and $N\geq i$ these points also lie in different atoms of $\bigvee_{i=-N}^{N} T_d^{-i} \cV_d$, which shows that $\cV_d$ separates points. 

We recall that $(\overline{X_{\cV_d}},\sigma)$ is the subshift induced by 
$\cV_d$. By Lemma \ref{MinimalCoding}, since $\mathcal{V}_d$ separates points and 
$(\cV_d)_N$ has nonempty interior for all $N\in \N$, one has that $(\overline{X_{\mathcal{V}_d}},\sigma)$ is a minimal subshift and there is a factor map $\pi_d \colon (\overline{X_{\mathcal{V}_d}},\sigma)\to (\T^d,T_d)$. Moreover, by construction, the set of points in $\T^d$ with more than one preimage for $\pi_d$ consists of points which fall in $F_d=\widehat{H}_0\cup \widehat{H}_1 \cup \ldots \cup \widehat{H}_{d-1}$ under some power of $T_d$, {\it i.e.},   
$\bigcup_{j\in \Z} T_d^{-j} F_d=\bigcup_{j\in \Z} T_d^{-j}\widehat{H}_0$. This set has zero Lebesgue measure and thus there exist points with exactly one preimage for $\pi_d$. In particular, $(\overline{X_{\mathcal{V}_d}},\sigma)$ is an almost one-to-one extension of $(\T^d,T_d)$. By Corollary \ref{Almost1Nil} we get,
\begin{lem}\label{lem:quotientexample}
The maximal $d$-step nilfactor of $(\overline{X_{\mathcal{V}_d}},\sigma)$ is the affine nilsystem $(\T^d,T_d)$. Then $\T^d$ can be identified with
the quotient 
$\overline{X_{\mathcal{V}_d}}/\RP^{[d]}(\overline{X_{\mathcal{V}_d}})$.
\end{lem}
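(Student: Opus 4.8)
The plan is to recognise this lemma as a direct application of Corollary \ref{Almost1Nil}, since all the substantive verification has already been carried out in the construction preceding the statement. First I would collect the three hypotheses required by that corollary for the factor map $\pi_d\colon(\overline{X_{\cV_d}},\sigma)\to(\T^d,T_d)$. Minimality of the extension $(\overline{X_{\cV_d}},\sigma)$ follows from Lemma \ref{MinimalCoding}, whose hypotheses (the partition $\cV_d$ separates points and each atom of $(\cV_d)_N$ has nonempty interior) were checked above; minimality of the base $(\T^d,T_d)$ is part of the affine nilsystem construction with $\alpha$ irrational; and the almost one-to-one property of $\pi_d$ was just established by observing that the set of points of $\T^d$ with more than one $\pi_d$-preimage is contained in $\bigcup_{j\in\Z}T_d^{-j}\widehat{H}_0$, a set of zero Lebesgue measure, so generic points have a unique preimage.

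The second ingredient is to observe that $(\T^d,T_d)$ trivially qualifies as an inverse limit of minimal $d$-step nilsystems: being itself a minimal $d$-step affine nilsystem, it is the (constant) inverse limit of the single system $(\T^d,T_d)$. With both systems minimal, $\pi_d$ an almost one-to-one extension, and the base an inverse limit of minimal $d$-step nilsystems, Corollary \ref{Almost1Nil} applies verbatim and yields that $(\T^d,T_d)$ is the maximal $d$-step nilfactor of $(\overline{X_{\cV_d}},\sigma)$.

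Finally, to obtain the stated identification, I would recall that by Theorem \ref{Thm:NilfactorRP^d} the maximal $d$-step nilfactor of any minimal system $(X,T)$ is exactly the quotient $Z_d(X)=X/\RP^{[d]}(X)$ endowed with the induced map. Applying this with $(X,T)=(\overline{X_{\cV_d}},\sigma)$, the maximal $d$-step nilfactor that we have just identified with $(\T^d,T_d)$ is topologically conjugate to $\overline{X_{\cV_d}}/\RP^{[d]}(\overline{X_{\cV_d}})$; this is precisely the asserted identification of $\T^d$ with that quotient.

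There is essentially no remaining obstacle internal to this lemma: the genuinely delicate work --- establishing that $\cV_d$ separates points, that the refinements $(\cV_d)_N$ have atoms with nonempty (convex) interior, and that $\pi_d$ is almost one-to-one --- was completed in the preceding paragraphs, and those are exactly the facts that feed Lemma \ref{MinimalCoding} and Corollary \ref{Almost1Nil}. The only point worth double-checking is purely formal, namely that a single $d$-step nilsystem counts as an inverse limit of $d$-step nilsystems, so that Corollary \ref{Almost1Nil} can be quoted without any modification.
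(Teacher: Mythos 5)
Your proposal is correct and follows essentially the same route as the paper: the paper's own ``proof'' consists precisely of the preceding construction (Lemma \ref{MinimalCoding} giving minimality, the zero-measure argument giving the almost one-to-one property of $\pi_d$) followed by a direct invocation of Corollary \ref{Almost1Nil}, with the identification of the nilfactor with $\overline{X_{\mathcal{V}_d}}/\RP^{[d]}(\overline{X_{\mathcal{V}_d}})$ coming from Theorem \ref{Thm:NilfactorRP^d} exactly as you say. Your explicit remark that a single minimal $d$-step nilsystem qualifies as a (constant) inverse limit is a point the paper leaves implicit, but it is the correct and only formal check needed.
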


We are ready to compute the group of automorphisms for these examples. 

\begin{theo}
\label{teo:affine}
The group {\rm Aut}$(\overline{X_{\mathcal{V}_d}},\sigma)$ is virtually $\Z$. 
\end{theo}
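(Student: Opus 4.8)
The plan is to reduce everything to the maximal nilfactor and then to a purely linear-algebraic question about the coding hyperplanes. Recall that just before the statement we established that $(\overline{X_{\cV_d}},\sigma)$ is an almost one-to-one extension of the minimal affine nilsystem $(\T^d,T_d)$, and that by Lemma \ref{lem:quotientexample} this nilsystem is its maximal $d$-step nilfactor. First I would invoke Lemma \ref{lem:folklore} to upgrade the almost one-to-one extension $\pi_d$ to a proximal extension, so that Corollary \ref{AutoNilFactor} applies and yields an \emph{injective} morphism $\widehat{\pi_d}\colon {\rm Aut}(\overline{X_{\cV_d}},\sigma)\hookrightarrow {\rm Aut}(\T^d,T_d)$. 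Hence it suffices to prove that the image of $\widehat{\pi_d}$ is virtually $\Z$; note this image already contains $\widehat{\pi_d}(\langle\sigma\rangle)=\langle T_d\rangle\cong\Z$.

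The key structural constraint is that every element of the image must preserve the singular set of the coding. Writing $\bar\phi=\widehat{\pi_d}(\phi)$, the relation $\pi_d\circ\phi=\bar\phi\circ\pi_d$ together with the bijectivity of $\phi$ shows that $\phi$ carries each fibre $\pi_d^{-1}(y)$ bijectively onto $\pi_d^{-1}(\bar\phi(y))$, so fibre cardinalities are preserved and $\bar\phi$ must fix the set $S$ of points of $\T^d$ having more than one $\pi_d$-preimage. By the description of $\pi_d$ obtained before Lemma \ref{lem:quotientexample}, $S=\bigcup_{j\in\Z}T_d^{-j}\widehat{H}_0=\bigcup_{i\in\Z}\widehat{H}_i$, a countable union of projected hyperplanes, where $\widehat{H}_i$ has primitive integer normal vector $v_i=(1,i,i^2,\ldots,i^{d-1})$.

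Next I would record that every automorphism of the minimal affine nilsystem $(\T^d,T_d)$ is affine, of the form $x\mapsto Bx+\gamma$ with $B\in GL_d(\Z)$ commuting with $A_d$ and $(B-I)\vec{\alpha}=(A_d-I)\gamma \pmod{\Z^d}$; this is where I expect to lean on the nilpotent/distal structure of the system. Such a $\bar\phi$ sends $\widehat{H}_i$ to a hyperplane with normal $(B^{T})^{-1}v_i$, so the condition $\bar\phi(S)=S$ forces $(B^{T})^{-1}$ to permute the moment vectors $\{\pm v_i:i\in\Z\}$. A direct computation gives $(A_d^{T})^{-1}v_i=v_{i-1}$, i.e. $A_d$ acts on the index set $\Z$ as the shift $i\mapsto i-1$; since $B$ commutes with $A_d$, the permutation of $\Z$ induced by $(B^{T})^{-1}$ commutes with this shift and is therefore itself a translation $i\mapsto i+c$. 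As any $d$ consecutive vectors $v_i$ form a Vandermonde basis of $\R^d$, this identity on the $v_i$ determines $B$ completely and yields $B=\pm A_d^{\,c}$. Thus the linear parts of the relevant automorphisms lie in $\langle -I,A_d\rangle\cong\Z\times\Z/2\Z$.

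Finally I would check injectivity of the linear-part map on these $S$-preserving affine maps: two of them with the same $B$ differ by a pure translation $R_\beta$ with $\beta\in\ker(A_d-I)$, and since $\ker(A_d-I)=\Z e_0$ is the line spanned by the first coordinate, $R_\beta$ merely shifts the level of $\widehat{H}_0$; preserving $S$ then forces $\beta=0$. Composing, ${\rm Aut}(\overline{X_{\cV_d}},\sigma)$ embeds into $\langle -I,A_d\rangle$, which is virtually $\Z$, and since the image contains $\langle A_d\rangle$ (the image of $\langle\sigma\rangle$) as a finite-index subgroup, the group ${\rm Aut}(\overline{X_{\cV_d}},\sigma)$ is virtually $\Z$. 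The main obstacles I anticipate are two: justifying that all automorphisms of $(\T^d,T_d)$ are affine (for $d\ge 3$ the centralizer of $A_d$ in $GL_d(\Z)$ is nilpotent of rank $d-1$, so without the $S$-preservation argument the nilfactor alone is far too large), and carrying out cleanly the moment-vector bookkeeping, including the computation $\ker(A_d-I)=\langle e_0\rangle$ and $(A_d^{T})^{-1}v_i=v_{i-1}$, which is precisely what collapses the would-be $\Z^{d-1}$ down to $\Z$.
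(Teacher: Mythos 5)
Your overall route is the same as the paper's: inject ${\rm Aut}(\overline{X_{\cV_d}},\sigma)$ into ${\rm Aut}(\T^d,T_d)$ via the almost one-to-one (hence proximal) extension, use the fact that every automorphism of the ergodic unipotent affine nilsystem is affine (the paper gets this from Theorem 2 and Corollary 1 of \cite{W} -- that citation is exactly what fills the spot where you say you would ``lean on the nilpotent/distal structure''), and then exploit invariance of the singular set $S=\bigcup_{j\in\Z}T_d^{-j}\widehat{H}_0$. Your handling of the linear part is a genuinely different sub-argument, and a rather clean one: the relation $(A_d^{T})^{-1}v_i=v_{i-1}$, the permutation of the normals commuting with the shift $i\mapsto i-1$, and the Vandermonde spanning argument give $B=\pm A_d^{c}$ in one stroke, whereas the paper composes with a power of $T_d$ to reduce to automorphisms fixing $\widehat{H}_0$ and then extracts $B=Id$ from the first-row equations of $A_d^iB=BA_d^i$. (Note only that identifying $\bar\phi(\widehat{H}_i)$ with a single $\widehat{H}_{\sigma(i)}$ uses the Baire-category step that the paper makes explicit.)

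The last step of your proof, however, is wrong. A translation $R_\beta$ commutes with $T_d$ if and only if $(A_d-Id)\beta\equiv 0 \pmod{\Z^d}$, which is strictly weaker than $\beta\in\ker(A_d-Id)$: the solution set is the line through $e_0$ \emph{together with rational torsion solutions}. These torsion translations can preserve $S$. Concretely, for $d=3$ take $\beta=(0,\tfrac12,\tfrac12)$: then $(A_3-Id)\beta=(1,1,0)^{t}\in\Z^3$, so $R_\beta$ commutes with $T_3$, and $\langle v_i,\beta\rangle=\tfrac{i+i^2}{2}\in\Z$ for every $i\in\Z$, so $R_\beta$ maps each $\widehat{H}_i$ onto itself and in particular preserves $S$ -- yet $\beta\neq 0$. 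Thus ``preserving $S$ forces $\beta=0$'' fails for $d\geq 3$, and with it your claimed embedding into $\langle -Id,A_d\rangle$. The gap is repairable, and this is precisely where the paper's proof is more careful: after reducing to $B=Id$ and $\beta_0=0$, it only claims that the solutions of $(A_d-Id)\vec\beta\in\Z^d$ form a \emph{finite} group of rational rotations (the triangular system forces $d\beta_{d-1}\in\Z$, and inductively each $\beta_k$ ranges over finitely many rationals), so that the image of $\widehat{\pi_d}$ is generated by $T_d$ together with a finite set -- which is all that ``virtually $\Z$'' requires. Replacing your triviality claim by this finiteness computation closes your proof.
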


\begin{proof}
Let $\phi \in {\rm Aut}(\overline{X_{\mathcal{V}_d}},\sigma)$ and set 
$W=\{ \omega=(w_i)_{i\in\Z} \in \overline{X_{\mathcal{V}_d}} ; \#\pi_d^{-1}\{\pi_d(\omega)\}\geq 2\}$. Then $\pi_d(W)$ is the set of points in $\T^d$ with more than one preimage for $\pi_d$. As discussed above 
$\pi_{d}(W)=\bigcup_{j\in \Z} T_d^{-j} F_d=\bigcup_{j\in \Z} T_d^{-j}\widehat{H}_0$.

By Lemma \ref{NilCompatible}, $\phi$ preserves $\RP^{[d]}(\overline{X_{\mathcal{V}_d}})$. Since $\pi_d$ is induced by this relation, then $W$ is invariant under $\phi$. We also get that $\widehat{\pi_d}(\phi) \in {\rm Aut}(\T^d, T_d)$ leaves invariant $\pi_d(W)=\bigcup_{j\in \Z} T^{-j}\widehat{H}_0$. 

The affine nilsystem $(\T^d,T_d)$ is ergodic by construction ($\alpha$ is irrational) and the associated matrix has $1$ as unique eigenvalue. 
Theorem 2 and Corollary 1 in \cite{W} imply that
$\widehat{\pi_d}(\phi) \in {\rm Aut}(\T^d, T_d)$ is an affine transformation, {\it i.e.}, it has the form $B x+ \vec{\beta}$, where $B$ is an invertible integer matrix  and $\vec{\beta} \in \mathbb{T}^d$ (recall that operations are taken modulo one). Hence, the image of  the projected hyperplane $\widehat{H}_0$ by the affine map  $\widehat{\pi_d}(\phi)$ is still a projected hyperplane. But the set $\pi_d(W)$ is invariant for 
$\widehat{\pi_d}(\phi)$ and so we get that the projected hyperplane $\widehat{\pi_d}(\phi)\widehat{H}_0$ is included in the union of the projected hyperplanes $(T_d^{-j}\widehat{H}_0)_{j\in \Z}$.
By Baire's theorem and since $\widehat{\pi_d}(\phi)\widehat{H}_0$ and $T_d^{-j}\widehat{H}_0$ for $j\in \Z$ share the same dimension, we obtain that $\widehat{\pi_d}(\phi)\widehat{H}_0$ is equal to some $T_d^{-j}\widehat{H}_0$. Finally, the automorphism $T_d^{j}\widehat{\pi_d}(\phi) \in {\rm Aut}(\T^d,T_d)$ leaves $\widehat{H}_0$ invariant.

We are left to study the automorphisms of $(\T^d,T_d)$ which leave 
$\widehat{H}_0$ invariant. Let $\varphi \in {\rm Aut}(\T^d,T_d)$ be such an automorphism. As discussed before, by \cite{W} $\varphi$ has the form
$\varphi(x)=Bx +\vec{\beta} \mod \Z^d$, where $B =(B_{i,j})_{1\le i,j \le d}$ is an invertible matrix with integer entries and 
$\vec{\beta}=(\beta_0,\ldots,\beta_{d-1})^t \in \mathbb{R}^d$. 
Since $\varphi$ commutes with $T_d$ we have for every $x\in \T^d$ that $A_dBx+A\vec{\beta}+\vec{\alpha}=BA_dx+B\vec{\alpha}+\vec{\beta} \mod \Z^d$. 
This allows us to conclude that $B$ commutes with $A_d$ as real matrices 
and that $(B-Id)\vec{\alpha}=(A_d-Id)\vec{\beta} \mod \Z^d$. 

The map $\varphi$ leaves $\widehat{H}_0$ invariant, meaning  that $\varphi(0,x_1,$ $\ldots,x_{d-1})\in \widehat{H}_0$ for any $(x_1,\ldots,$ $x_{d-1})\in \mathbb{T}^{d-1}$.
This allows us to deduce that coefficients $B_{1,2}=\ldots=B_{1,d}=0={\beta}_0$. 
Also, since $A_d^iB=BA_d^i$ for every $i\in \N$, by looking at the first rows of these matrices, we deduce that for all $1 \leq j \leq d$ and $i\in \N$
$$\sum_{k=1,k\neq j}^{d} (B_{j,k})i^{k-1} + (B_{j,j}-B_{1,1})i^{j-1}=0.$$
But the vectors $(1,i,i^2,\ldots,i^{d-1})$ are linearly independent for different values of $i \in \N$, so $B=B_{1,1}I_d$. Therefore, $(A_d-Id){\vec\beta}=(B-Id)\vec{\alpha}=(B_{1,1}-1)\vec{\alpha} \mod \Z^d$. Since $A_d$ is upper triangular with ones in the diagonal, we deduce that $(B_{1,1}-1)\alpha\in \mathbb{Q}$ and thus $B_{1,1}=1$. We have proved that $B=Id$ and then $\varphi$ is the rotation by 
$\vec{\beta}=(0,{\beta}_1,\ldots,{\beta}_{d-1})^t$ and $(A_d-Id)\vec{\beta} \in \Z^d$. This last property can be written as 
{\tiny
$$\left( \begin{array}{cccccc} 0 & 1 & 1 &\cdots  &1  \\ 
 \quad & 0 & 2 & &  \\
 \quad & \quad  & \ddots & \ddots  \\
 \quad & \quad  & \quad & 0 & d  \\
 \quad & \quad & \quad & \quad &  0 
 \end{array} \right ) \left( \begin{array}{c} 0 \\ {\beta}_1\\  \vdots \\ {\beta}_{d-1}\end{array} \right ) \in \Z^{d}.$$}
This implies that $d\beta_{d-1}\in \Z$ which is possible for finitely many 
$\beta_{d-1}\mod \Z \in \mathbb{T}$. Inductively, we deduce that there are finitely many rational solutions $\vec{\beta}=(0,\beta_1,\ldots,\beta_{d-1})^t \mod \Z^d$ in $\mathbb{T}^d$. This means that the group of automorphisms that leaves $\widehat{H}_0$ invariant is a finite group of rational rotations. Therefore, $\widehat{\pi}({\rm Aut}(\overline{X_{\mathcal{V}_d}},\sigma))$ is spanned by $T_d$ and a finite set. We recall that the factor map $\pi_d \colon (\overline{X_{\mathcal{V}_d}},\sigma)\to (\T^d,T_d)$ is almost one-to-one, so by Lemma \ref{ProximalExtension} 
$\widehat{\pi}\colon {\rm Aut}(\overline{X_{\mathcal{V}_d}},\sigma) \to {\rm Aut}(\mathbb{T},T_d)$ is an injection.  We conclude that ${\rm Aut}(\overline{X_{\mathcal{V}_d}},\sigma)$ is also spanned by $\sigma$ and a finite set. The result follows.
\end{proof}

To finish this section, we mention that the main theorem in \cite{AM} (see page 2) asserts that the complexity function of 
$(\overline{X_{\mathcal{V}_d}},\sigma)$ is given by 
$$p(n)=\frac{1}{V(0,1,\ldots,d-1)} \sum\limits_{0\leq k_1< k_2< \ldots <k_d\leq n+d-1 }  V(k_1,k_2,\ldots,k_d),$$
where $V(k_1,k_2,\ldots,k_d)=\prod\limits_{1\leq i < j \leq d} (k_j-k_i)$ is a Vandermonde determinant. 
We note that varying $d\in \N$ results in polynomial complexities of arbitrary degree. 

Thus we have proved that particular symbolic codings of affine nilsystems produce subshifts of polynomial complexity of arbitrary degree whose automorphism groups are virtually $\Z$. A natural question is whether or  not this is still true for symbolic extensions of general nilsystems induced by coding on well chosen partitions. 

\section{Final comments and open questions}\label{last}
In this section, we comment on some natural questions that follow from our own work together with recent work on the topic of this article. 
 
\subsection{Realization of automorphism groups}\label{sec:realization}

By the Curtis-Hedlund-Lyndon theorem the collection of automorphisms of a subshift is countable. So it is natural to ask whether any countable group can be realized as an automorphism group of a subshift. This is a complicated question and, as was mentioned in the introduction, many partial answers have been given in the case of positive entropy subshifts. 
In the context of this article the question we want to address is:  
\begin{question}\label{question1}
Given a countable group $G$ (not necessarily finitely generated), does there exist a minimal subshift with subexponential complexity $(X, \sigma)$ such that 
${\rm Aut} (X,\sigma)$ is isomorphic to $G$ ?
\end{question}

We are far from solving this question. As a first step we provide subshifts whose automorphism groups are isomorphic to $\Z^d$ for some integer $d\geq 1$. 

\begin{prop}
For every integer $d \ge 1$, there exists a minimal subshift $(X, \sigma)$  with complexity satisfying $p_X(n)= \Theta(n^d)$ such that ${\rm Aut}(X, \sigma)$ is isomorphic to $\Z^d$. 
\end{prop}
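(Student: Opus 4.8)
The plan is to realize $\mathbb{Z}^d$ as the automorphism group of a symbolic coding of a minimal rotation on the torus $\mathbb{T}^d$, exploiting that such a coding is an almost one-to-one (hence proximal) extension of a $1$-step nilsystem. Concretely, I would fix reals $\alpha_1,\dots,\alpha_d$ with $1,\alpha_1,\dots,\alpha_d$ rationally independent, so that the rotation $R\colon \mathbb{T}^d\to\mathbb{T}^d$, $x\mapsto x+\vec{\alpha}$ with $\vec{\alpha}=(\alpha_1,\dots,\alpha_d)$, is minimal and equicontinuous. I would then code $(\mathbb{T}^d,R)$ by the product partition $\mathcal{U}$ whose $2^d$ atoms are the boxes $\prod_{i=1}^{d} I_i$ with $I_i\in\{[0,\alpha_i),[\alpha_i,1)\}$; its boundary is $\partial\mathcal{U}=\bigcup_{i=1}^d(\{x_i=0\}\cup\{x_i=\alpha_i\})$. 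Setting $X=\overline{X_{\mathcal U}}$, this is the natural higher-dimensional analogue of the Sturmian coding performed independently in each coordinate.

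First I would check that $\mathcal{U}$ separates points (two distinct points differ in some coordinate $i$, and the one-dimensional Sturmian coding of the $i$-th circle separates them since $\alpha_i$ is irrational) and that every atom of $\mathcal{U}_N$, being a product of arcs, has nonempty interior. Lemma \ref{MinimalCoding} then yields that $(X,\sigma)$ is a minimal subshift together with a factor map $\pi\colon(X,\sigma)\to(\mathbb{T}^d,R)$; since the fibers with more than one point project to the measure-zero set $\bigcup_{j\in\mathbb{Z}}R^{-j}(\partial\mathcal U)$, the extension $\pi$ is almost one-to-one. For the complexity I would count the atoms of $\bigvee_{j=0}^{n-1}R^{-j}\mathcal U$: in each coordinate $i$ the cut points are the $n+1$ distinct values $\{k\alpha_i : -(n-1)\le k\le 1\}$, so the $i$-th circle is split into $n+1$ arcs, and the atoms are the products of these arcs; hence $p_X(n)=(n+1)^d=\Theta(n^d)$. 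Equivalently, $X$ is the product subshift $Y_1\times\cdots\times Y_d$ of the $d$ Sturmian systems, which makes this count transparent.

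Next I would compute the automorphism group. Being almost one-to-one, $\pi$ is proximal by Lemma \ref{lem:folklore}; its target is equicontinuous, hence by Corollary \ref{Almost1Nil} it is the maximal equicontinuous factor of $(X,\sigma)$. Lemma \ref{NilCompatible} (for $d=1$) shows $\pi$ is compatible with $\operatorname{Aut}(X,\sigma)$, and Lemma \ref{ProximalExtension} gives an injection $\widehat{\pi}\colon\operatorname{Aut}(X,\sigma)\hookrightarrow\operatorname{Aut}(\mathbb{T}^d,R)$, which by Lemma \ref{lem:autoequicont} is the translation group $\mathbb{T}^d$. For the lower bound I would use the product structure: for any $(k_1,\dots,k_d)\in\mathbb{Z}^d$ the map $\sigma_1^{k_1}\times\cdots\times\sigma_d^{k_d}$ is a homeomorphism of $X=\prod_i Y_i$ commuting with the diagonal shift $\sigma$, hence an automorphism, and $\widehat{\pi}$ sends it to the translation by $\vec{\beta}=(k_1\alpha_1,\dots,k_d\alpha_d)$. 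Thus the image of $\widehat{\pi}$ contains the subgroup $G=\mathbb{Z}\alpha_1\times\cdots\times\mathbb{Z}\alpha_d$, which is isomorphic to $\mathbb{Z}^d$ because each $\alpha_i$ is irrational.

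The heart of the argument, and the step I expect to be the main obstacle, is the matching upper bound: every automorphism must project into $G$. Given $\phi$, write $\widehat{\pi}(\phi)=T_{\vec\beta}$. Since $\phi$ is a bijection compatible with $\pi$, the translation $T_{\vec\beta}$ must preserve the singular set $\Sigma=\{y : \#\pi^{-1}(y)\ge 2\}=\bigcup_{i=1}^d\Sigma_i$, where $\Sigma_i=\{y : y_i\in\mathbb{Z}\alpha_i \bmod 1\}$. I would then argue coordinate by coordinate: the subtorus $\{y_i=0\}\subseteq\Sigma_i$ is mapped by $T_{\vec\beta}$ to $\{y_i=\beta_i\}$, which must lie in $\Sigma$; choosing a point of this subtorus whose remaining coordinates avoid the countable sets $\mathbb{Z}\alpha_{i'}$ for $i'\ne i$ forces $\beta_i\in\mathbb{Z}\alpha_i$. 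Hence $\vec\beta\in G$, so $\operatorname{Im}\widehat{\pi}=G$ and, $\widehat{\pi}$ being injective, $\operatorname{Aut}(X,\sigma)\cong\mathbb{Z}^d$. The delicate points to get right are the precise description of $\Sigma$ (that ambiguous codings occur exactly over the orbit of $\partial\mathcal U$) and the genericity argument separating the $d$ hyperplane directions; choosing the boundary points $\{0,\alpha_i\}$ rather than arbitrary cut points is precisely what makes $G$ torsion-free, and therefore exactly $\mathbb{Z}^d$.
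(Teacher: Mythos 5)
Your proposal is correct, and the system you build is the paper's system in different clothing: coding the minimal rotation $R_{\vec\alpha}$ of $\mathbb{T}^d$ by the product of two-arc partitions produces exactly the product $Y_1\times\cdots\times Y_d$ of $d$ Sturmian subshifts over rationally independent angles, which is what the paper takes as its starting point; and your computation of ${\rm Aut}(X,\sigma)$ follows the same scheme (inject into ${\rm Aut}(\mathbb{T}^d,R_{\vec\alpha})\cong\mathbb{T}^d$ via Lemmas \ref{lem:folklore}, \ref{NilCompatible}, \ref{ProximalExtension} and \ref{lem:autoequicont}; embed $\Z^d$ by coordinate shifts; pin down the image by invariance of singular fibers, your set of fibers of cardinality at least two plus a genericity choice being interchangeable with the paper's set of fibers of maximal cardinality $2^d$, evaluated at one point). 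Two divergences are worth recording. First, minimality: the paper deduces it from Auslander's Theorems 7 and 9 of Chapter 11 (transitivity of the product plus the proximal relation being an equivalence relation in each Sturmian factor), whereas you get it from Lemma \ref{MinimalCoding}; your route is more self-contained within the paper's toolbox, but it requires verifying that the box partition separates points (your coordinate-wise reduction is fine) and minimality of the base rotation, for which your hypothesis --- rational independence of $1,\alpha_1,\dots,\alpha_d$ --- is the precise one (independence of the $\alpha_i$ alone, as the paper literally writes, does not suffice). Second, you assert the identification $X=\overline{X_{\mathcal{U}}}=Y_1\times\cdots\times Y_d$ without proof and then use it in the lower bound so that $\sigma_1^{k_1}\times\cdots\times\sigma_d^{k_d}$ acts on $X$; a priori the coding subshift is only a closed invariant subset of the product, so you should add the one-line justification that the $\mathcal{U}$-name of a point is the tuple of its coordinate Sturmian names, whence $X_{\mathcal{U}}$ is the product of the coordinate name sets and its closure is the product of their closures.
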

Thus, we remark that the statement of Theorem \ref{prop:fini} is no longer valid for  arbitrary polynomial complexity.

\begin{proof}
Let $\alpha_1,\ldots,\alpha_d \in \R\setminus \QQ$ be rationally independent numbers. For every $i \in \{1,\ldots,d\}$ let $([0,1),R_{\alpha_{i}})$ be the 
rotation modulo one by angle $\alpha_i$ on the unit interval and let 
$(X_i,\sigma_i)$ be the Sturmian subshift associated to it (we write $\sigma_i$ to distinguish the shift in each of the systems). 
We recall that each Sturmian subshift is obtained from the coding of the orbits of points for $R_{\alpha_{i}}$ with respect to the partition $\{[0,1-\alpha_i),[1-\alpha_i,1)\}$. Since each $\alpha_i$ is an irrational number, there exists an almost one-to-one extension $\pi_i:(X_i,\sigma_i) \to ([0,1), R_{\alpha_{i}})$ and $\pi_i$ is injective except for the orbit of $1-\alpha_i$, where every point  has exactly two preimages. This last fact implies that $([0,1), R_{\alpha_{i}})$ is its maximal equicontinuous factor and that, in $(X_i,\sigma_i)$, the proximal relation is an equivalence relation.    

Set $X=X_1\times X_2\cdots \times X_d$, $\sigma=\sigma_1\times \sigma_2\cdots \times \sigma_d$ and $R_{\vec\alpha}=R_{\alpha_{1}}\times \cdots \times R_{\alpha_{d}}$. Since the angles $\alpha_1,\ldots,\alpha_d$ are rationally independent, the product system 
$([0,1)^d,R_{\vec\alpha})$ is minimal. 
This implies, by Theorem 7 in \cite[Chapter 11]{Aus}, that $(X,\sigma)$ is transitive. However, in each subshift $(X_i,\sigma_i)$, the proximal relation is an equivalence relation and so by Theorem 9 in \cite[Chapter 11]{Aus} we get that $(X,\sigma)$ is a minimal subshift. In addition, the   product system $([0,1)^d,R_{\vec\alpha})$ is its maximal equicontinuous factor. The factor map $\pi=\pi_1\times\cdots \times \pi_d: (X,\sigma)\to ([0,1)^d,R_{\vec\alpha})$ is almost one-to-one and each point in $[0,1)^d$ has at most $2^d$ preimages for $\pi$.  
 
Recall that  for each $i\in \{1,\ldots,d\}$ the group ${\rm Aut}(X_i,\sigma_i)$ is generated by $\sigma_{i}$ (see the comment below Theorem \ref{prop:fini} or \cite{Ol13}). It is clear that the map $(\phi_1,\ldots, \phi_d) \in {\rm Aut}(X_{1},\sigma_{1}) \times\cdots \times{\rm Aut}(X_{d}, \sigma_{d})  \mapsto \phi_1\times\cdots\times\phi_d \in {\rm Aut}(X,\sigma)$ is an embedding of the group $\Z^d$.  We claim that this embedding is actually an isomorphism. 

By Lemma \ref{NilCompatible} the factor $\pi:(X,\sigma) \to ([0,1)^d,R_{\vec\alpha})$ is compatible with ${\rm Aut} (X, \sigma)$, so 
for every $\phi \in {\rm Aut} (X, \sigma)$ the automorphism $\widehat{\pi}(\phi) \in {\rm Aut}([0,1)^d,R_{\vec\alpha})$ is well defined. Moreover,  it preserves the set of points in $[0,1)^d$ that have a maximum number of preimages for $\pi$: namely the set  ${\rm Orb}_{R_{\alpha_{1}}}(1-\alpha_1) \times \cdots \times {\rm Orb}_{R_{\alpha_{d}}}(1-\alpha_d)$.
 Hence  there exist $n_1,\ldots,n_d \in \Z$ such that 
$\widehat{\pi}(\phi)(1-\alpha_1,\ldots,1-\alpha_d)=
(R_{\alpha_{1}}^{n_1}(1-\alpha_1),\ldots,R_{\alpha_d}^{n_d}(1-\alpha_d))$. This implies that $\widehat{\pi}(\phi)=R_{\alpha_{1}}^{n_1}\times\cdots \times R_{\alpha_{d}}^{n_d}=\widehat{\pi}(\sigma_1^{n_1}\times\cdots\times \sigma_d^{n_d})$. But, by Lemma 
\ref{ProximalExtension}, the map $\widehat \pi: {\rm Aut}(X,\sigma) \to 
{\rm Aut}([0,1)^d,R_{\vec\alpha})$ is injective, thus $\phi=\sigma_1^{n_1}\times\cdots\times \sigma_d^{n_d}$. This proves our claim and ${\rm Aut}(X, \sigma)$ is isomorphic to $\Z^d$. 

To finish we compute the complexity function of $(X,\sigma)$. It is well known that $p_{X_i}(n)=n+1$ for every $i \in \{1,\ldots,d\}$. Thus, the complexity function of $(X,\sigma)$ is $p_X(n)=(n+1)^d$. 
\end{proof}

Another direction to explore in order to answer Question \ref{question1} is to analyse specific families of subshifts. In particular, Toeplitz subshifts have proved to be a very good source of inspiration for constructively solving some open problems in different branches of topological dynamics. As was stated in Corollary \ref{AutoNilFactor}, the automorphism group of a Toeplitz subshift is a subgroup of its maximal equicontinuous factor which is an odometer. These systems are well understood so we may expect to explicitly describe this subgroup. 

\subsection{Relation between dynamical properties and automorphisms}

\subsubsection{Complexity versus group of automorphisms}

The  results of \cite{CK, CK2} and of this paper show the relation between the complexity and the growth rate of the automorphism groups of subshifts, especially for subquadratic complexities. Is it possible to extend these results to higher complexities? Inspired by the main theorem of this paper and examples in Sections \ref{sec:coding} and \ref{sec:realization}, we ask

\begin{question}\label{question2}
Let $(X,\sigma )$ be a minimal or transitive subshift such that
$$
d = \inf \{ \delta \in \mathbb{N} ;  0< \liminf_{n\to +\infty} p_X (n)/n^\delta  < +\infty \} > 0 .
$$
Is the automorphism group of such a  subshift virtually $\mathbb{Z}^k$ for some $k \le d$?
\end{question}

\subsubsection{Recurrence and growth rate of automorphism groups}
Is it possible to give an extension of Theorem \ref{prop:fini} to  a class of subshifts with higher complexity? To address this question we propose 
exploring an alternative notion to word complexity. For a subshift $(X,\sigma)$, we define  the {\em visiting time} map by: 
$$
R^{''}_{X}(n) := \inf \{ \vert w \vert; \ w\in \cL(X) \textrm{ contains each word of } X \textrm{ of length } n \},
$$
where $n \in \N$. 
To the best of our knowledge, this concept was first introduced in \cite{Cassaigne:98} but without any name. We have borrowed the notation from this reference and we bestow a name on it. 
Clearly, this map is finite for every $n \in \N$ if and only if the subshift is transitive. In this case, it satisfies $R^{''}_{X}(n) \ge p_{X}(n) +n-1$. Moreover,  for a minimal subshift $R^{''}_{X}(n)$ is less than the so-called {\it recurrence function} $R_{X}(n)$ as defined in \cite{HM40}. We will not comment any further on this latter function.  

Some computations are known for particular subshifts. For instance, 
linearly recurrent subshifts, which include primitive substitutive subshifts, satisfy ${R^{''}_{X}(n)}=O(n)$. Also, it is proved in \cite{Cassaigne:98} that $R^{''}_{X}(n) \le 2n$ for every Sturmian subshift. 

For higher polynomial degree we obtain the following result.

\begin{prop}\label{teo:nilvitual}
Let $(X,\sigma)$ be a subshift such that ${R^{''}_{X}(n)} = O({n^d})$ for some integer $d\ge 1$.  
Then, each finitely generated subgroup of {\rm Aut}$(X,\sigma)$ is a virtually nilpotent group whose step only depends on $d$.
\end{prop}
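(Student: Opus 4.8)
The plan is to show that every finitely generated subgroup $\Gamma \le {\rm Aut}(X,\sigma)$ has polynomial growth of degree bounded by a function of $d$, and then to invoke Gromov's theorem on groups of polynomial growth together with the Bass--Guivarc'h formula in order to conclude that $\Gamma$ is virtually nilpotent with a nilpotency step controlled by $d$. Since ${\rm Aut}(X,\sigma)$ need not be finitely generated, the argument must proceed subgroup by subgroup; this is exactly the strategy behind Theorem 1.1 in \cite{CK2}, the difference being that we feed the machine a bound on the visiting time map $R^{''}_{X}$ rather than on the word complexity.

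The heart of the matter is a counting estimate. By the Curtis--Hedlund--Lyndon theorem, every $\phi \in {\rm Aut}(X,\sigma)$ has a radius $r$ and is determined by its local map $\hat\phi$ on $\cL_{2r+1}(X)$. Fix $r$ and choose a word $w \in \cL(X)$ of length $R^{''}_{X}(2r+1)$ containing every word of $X$ of length $2r+1$, which exists by definition of the visiting time map. If $\phi$ has radius at most $r$, then applying the extended local map $\hat\phi$ to the successive windows of $w$ produces a word $\hat\phi(w)$ of length $R^{''}_{X}(2r+1)-2r$ that is a factor of $\phi(x)\in X$ for any $x$ having $w$ as a central factor; hence $\hat\phi(w)\in \cL(X)$. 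Because $w$ contains all words of length $2r+1$, the word $\hat\phi(w)$ records the value of $\hat\phi$ on every element of $\cL_{2r+1}(X)$, so $\phi \mapsto \hat\phi(w)$ is injective on the automorphisms of radius at most $r$. Consequently their number is at most $p_X\big(R^{''}_{X}(2r+1)\big)$. This is precisely the point where the hypothesis on $R^{''}_{X}$, and not merely on $p_X$, is indispensable: it is what bounds the length of the universal word $w$.

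I would then convert this into a growth bound. If $\Gamma=\langle \phi_1,\dots,\phi_k\rangle$ with each generator of radius at most $r_0$, then any product of at most $n$ generators is an automorphism of radius at most $nr_0$, so the ball of radius $n$ in the word metric of $\Gamma$ has cardinality at most $p_X\big(R^{''}_{X}(2nr_0+1)\big)$. Using $R^{''}_{X}(m)=O(m^d)$ together with the elementary inequality $p_X(m)\le R^{''}_{X}(m)=O(m^d)$, this upper bound is $O\big((n^{d})^{d}\big)=O(n^{d^2})$. Thus $\Gamma$ has polynomial growth of degree at most $d^2$.

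Finally, Gromov's polynomial growth theorem yields that $\Gamma$ is virtually nilpotent; passing to a finite-index torsion-free nilpotent subgroup $N$, which has the same growth degree, the Bass--Guivarc'h formula expresses this degree as $\sum_{i\ge 1} i\cdot {\rm rank}(N_i/N_{i+1})$. If $N$ has step $s$ then $N_s$ is nontrivial and torsion-free, so this sum is at least $s$; combined with the degree bound $d^2$ this forces $s\le d^2$, a quantity depending only on $d$. The main obstacle, and the only genuinely new ingredient, is the counting estimate and its reliance on $R^{''}_{X}$ rather than $p_X$; the passage from polynomial growth to virtual nilpotency with a $d$-dependent step is then a citable consequence of Gromov's theorem and the Bass--Guivarc'h formula.
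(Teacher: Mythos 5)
Your proposal is correct, and its central counting estimate coincides with the paper's: both choose a single word $w\in\cL(X)$ of length $R^{''}_{X}(2n{\mathbf r}+1)$ containing every word of $X$ of length $2n{\mathbf r}+1$, observe that evaluating local maps on $w$ injects the ball of radius $n$ of a finitely generated subgroup of ${\rm Aut}(X,\sigma)$ into $\cL(X)$, and then combine $R^{''}_{X}(m)=O(m^d)$ with the elementary inequality $p_X(m)\le R^{''}_{X}(m)$ to bound that ball by (roughly) $n^{d^2}$. The divergence is in the group-theoretic endgame. The paper concludes with a single citation to the quantitative, finitary version of Gromov's theorem of Shalom and Tao (Theorem 1.8 of \cite{ShTa}), which converts a polynomial ball bound --- in fact a bound at one sufficiently large scale --- directly into virtual nilpotency with an explicit step depending only on the growth exponent. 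You instead invoke classical Gromov's theorem to get virtual nilpotency and then recover the quantitative step bound by hand: you pass to a finite-index, finitely generated, torsion-free nilpotent subgroup $N$ (which exists and has growth degree again at most $d^2$, growth being a commensurability invariant), and apply the Bass--Guivarc'h formula; the key point, which you state correctly, is that torsion-freeness forces the last nontrivial lower central series term $N_s$ (a nontrivial, central, finitely generated, torsion-free abelian group) to have rank at least $1$, so the growth degree is at least $s$, whence $s\le d^2$. Both endgames are valid and yield a step bound depending only on $d$. Your route is self-contained modulo two classical theorems and makes the provenance of the bound transparent; the paper's route is shorter, requires as input only a ball estimate at a single large scale (strictly weaker than the all-large-$n$ polynomial growth needed for classical Gromov), and comes with explicit constants, which is exactly what the paper's remark following the proposition emphasizes.
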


\begin{proof}
Let ${\mathcal S}=\langle \phi_{1}, \ldots, \phi_{\ell} \rangle \subseteq {\rm Aut}(X,\sigma)$ be a finitely generated group. Let $\mathbf r$ be an upper bound of the radii of the local maps associated to all generators $\phi_{i}$ of ${\mathcal S}$ and their inverses. For $n\in \N$, consider
$$
B_{n}({\mathcal S})=\{\phi_{i_{1}}^{s_{1}}\cdots \phi_{i_{m}}^{s_{m}}; 1\leq m \leq n, 
\ i_{1},\ldots,i_{m}\in \{1,\ldots,\ell\}, \ s_{1},\ldots,s_{m}\in \{1,-1\}\} \ . 
$$

Let $w$ be a word of length $R^{''}_{X}(2n{\mathbf r}+1)$ containing every  word of length $(2n{\mathbf r}+1)$ of $X$. 
If $\phi, \phi' \in B_{n}({\mathcal S})$ are different, then $\phi(w)\not = \phi'(w)$. Further, there is  an injection from 
$B_{n}({\mathcal S})$  into the set of words of length $R^{''}_{X}(2n{\mathbf r}+1)-2 {\mathbf r}$ (the injection is just the evaluation of $\phi$ on $w$). This implies that $\sharp  B_{n}({\mathcal S}) \leq p_{X}(R^{''}_{X}(2n{\mathbf r}+1)-2 {\mathbf r})$. 
We deduce from the hypothesis on $R^{''}_{X}$ that  $\sharp B_{n}({\mathcal S}) \le n^{d^2+1}$ for all large enough integers $n \in \N$. The proof is completed by applying the quantitative result of  Y. Shalom and T. Tao in \cite{ShTa} generalizing Gromov's classical result on the growth rate of groups.   
\end{proof}
Notice that Theorem 1.8 of \cite{ShTa} provides and explicit value for the step of the nilpotent group appearing in the proposition. It is clear that a subshift of polynomial visiting time (meaning that $R^{''}_{X}(n) = O(n^d)$ for some integer $d \ge 1$) has polynomial complexity. It is straightforward to show that the converse is false by constructing explicit counterexamples. 

\subsection{Extension to higher dimensional subshifts} 
A natural generalization of the topic developed in this article is to study the  automorphism groups of higher dimensional subshifts and even of tiling systems. 

We believe that the study of  asymptotic components or the somehow analogous notion of nonexpansive directions in higher dimensions may also provide useful tools to address computations of automorphism groups in this context. For instance, in \cite{DonSun} such an approach allowed the authors to prove that the automorphism group of the minimal component of the Robinson subshift of finite type is trivial, {\it i.e.}, it is generated by the shift map.

\section*{Acknowledgements}
We are very grateful to Andrew Hart for helping to revise the last version of this article.
We thank both referees that participated with their reports to improve the paper's clarity, readability, and organization.


\begin{thebibliography}{SSS}

\bibitem{AM} P. Arnoux, C. Mauduit, Complexit\'e de suites engendr\'ees par des 
r\'ecurrences unipotentes, 
{\it Acta Arith.} {\bf 76} (1996), 85--97.

\bibitem{Aus63} J. Auslander, Endomorphisms of minimal sets, {\it Duke Math. J. } {\bf 30} (1963), 605--614.

\bibitem{Aus} J. Auslander, Minimal flows and their
extensions, North-Holland Mathematics Studies 153,
North-Holland Publishing Co., Amsterdam, 1988.

\bibitem{AGH} L. Auslander, L. Green, F. Hahn, Flows on homogeneous spaces, {\it Annals of Mathematics Studies} {\bf 53} (1963), Princeton University Press, Princeton, N.J., (with the assistance of L. Markus and W. Massey, and an appendix by L. Greenberg).


\bibitem{BD1} M. Barge, B. Diamond, 
A complete invariant for the topology of 
one-dimensional substitution tiling spaces, 
{\it Ergodic Theory Dynam. Systems} {\bf 21} (2001), 1333--1358.

\bibitem{BDH} M. Barge, B. Diamond, C. Holton, Asymptotic orbits of primitive substitutions, {\it Theoret. Comput. Sci.} {\bf 301} (2003), 439-–450. 

\bibitem{BLD} M. Boyle, D. Lind, D. Rudolph, The automorphism group of a shift of finite type, 
{\it Trans. Amer. Math. Soc.} {\bf 306} (1988), 71--114.

\bibitem{Cassaigne:1997}  J. Cassaigne,
Complexit\'e et facteurs sp\'eciaux,
Journ{\'e}es Montoises (Mons, 1994),
{\it Bull. Belg. Math. Soc. Simon Stevin} {\bf 4} (1997), 67--88.

\bibitem{Cassaigne:98} 
J. Cassaigne,
Sequences with grouped factors,
Developments in Language Theory III (DLT'97), Aristote University of Thessaloniki,
(1998), 211--222.


\bibitem{C}
E. M. Coven.
Endomorphisms of substitution minimal sets.
{\it Z. Wahrscheinlichkeitstheorie und Verw. Gebiete} {\bf 20} (1971/72), 129--133. 

\bibitem{CQY} E. Coven, A. Quas, R. Yassawi, Automorphisms of some Toeplitz and other minimal shifts with sublinear complexity, {\tt arXiv:1505.02482}.


\bibitem{CK} V. Cyr, B. Kra, The automorphism group of a shift of subquadratic growth, to appear in {\it Proc. Amer. Math. Soc.}, {\tt arXiv:1403.0238}.

\bibitem{CK2} V. Cyr, B. Kra, The automorphism group of a shift of linear growth, {\it Forum of Mathematics, Sigma} {\bf 3} (2015), e5.

\bibitem{dJ78} 
A. del Junco, 
A simple measure-preserving transformation with trivial centralizer,
{\em Pacific J. Math.} 79 (1978), 357--362. 

\bibitem{DonSun} S. Donoso, W. Sun, Dynamical cubes and a criteria for systems having products extensions, {\tt arXiv:1406.1220}.  

\bibitem{Dow} T. Downarowicz, 
Survey of odometers and Toeplitz flows. Algebraic and topological dynamics, 7--37,
{\it Contemp. Math.}, {\bf 385}, Amer. Math. Soc., Providence, RI, 2005. 

\bibitem{D} F. Durand, Linearly recurrent subshifts have a finite number of nonperiodic factors,
{\it Ergodic Theory Dynam. Systems} {\bf 20} (2000), 1061--1078.

\bibitem{F97}
S. Ferenczi, Systems of finite rank,
{\it Colloq. Math.} 73 (1997), 35--65. 

\bibitem{FF} D. Fiebig, U. Fiebig, The automorphism group of a coded system, {\it Trans. Amer. Math. Soc.} {\bf 348} (1996), 3173--3191.

\bibitem{Hed} G. A. Hedlund, 
Endomorphisms and automorphisms of the shift dynamical system, 
{\it Math. Systems Theory} {\bf 3} (1969), 320--375. 

\bibitem{HM40} 
G. A. Hedlund, M. Morse,
Symbolic Dynamics {II}.  {Sturmian} trajectories,
{\it Amer. J. Math.} {\bf  62} (1940), 1--42.

\bibitem{Hoch} M. Hochman, On the automorphism groups of multidimensional shifts of finite type, {\it Ergodic Theory Dynam. Systems} {\bf 30} (2010), 809--840.

\bibitem{HK05} B. Host, B. Kra, Nonconventional averages and nilmanifolds, 
\textit{Ann. of Math. (2)} {\bf 161} (2005), 398--488.

\bibitem{HKM} B. Host, B. Kra, A. Maass, Nilsequences and a structure
theorem for topological dynamical systems, 
\textit{Adv. Math. } {\bf 224} (2010), 103--129.

\bibitem{HP} B. Host, F. Parreau, Homomorphismes entre syst\`emes dynamiques 
d\'efinies par substitutions, 
{\it Ergodic Theory Dynam. Systems} {\bf 9} (1989), 469--477. 

\bibitem{KR90} K.H. Kim, F.W. Roush, On the automorphism groups of subshifts, 
{\it Pure Math. Appl. Ser. B} {\bf 1} (1990) 203--230.

\bibitem{KT91}
J. King, J.-P. Thouvenot, 
A canonical structure theorem for finite joining-rank maps,
{\em J. Analyse Math.} {\bf 56} (1991), 211--230. 

\bibitem{kur}
P. K{\ocirc{u}}rka, Topological and symbolic dynamics. 
Cours Sp\'ecialis\'es 11, Soci\'et\'e Math\'ematique de France, Paris, 2003.

\bibitem{L1} A. Leibman, Pointwise convergence of ergodic averages for polynomial sequences of translations on a nilmanifold, \textit{Ergodic Theory Dynam. Systems} {\bf 25} (2005), 201--213.

\bibitem{LM}  M. Lema\'nczyk and M. Mentzen,
On metric properties of substitutions,
Compositio Math. {\bf 65} (1988), 241--263. 


\bibitem{M92} B. Moss\'e,
Reconnaissabilit\'e des substitutions et complexit\'e des suites automatiques,
{\it Bull. Soc. Math. France} {\bf 124} (1996), 329--346.


\bibitem{M96} B. Moss\'e,
Puissances de mots et reconnaissabilit\'e des points fixes d'une substitution, 
{\it Theoret. Comput. Sci.} {\bf  99} (1992), 327--334. 


\bibitem{Ol13} J. Olli, Endomorphisms of Sturmian systems and the discrete chair substitution tiling system, {\it Discrete Contin. Dyn. Syst.} {\bf 33} (2013), 4173--4186.

\bibitem{Or72}
D. Ornstein, 
On the root problem in ergodic theory,
Proceedings of the Sixth Berkeley Symposium on Mathematical Statistics and Probability (Univ. California, Berkeley, Calif., 1970/1971), Vol. II: Probability theory, pp. 347--356. Univ. California Press, Berkeley, Calif., 1972. 

\bibitem{Pa} W. Parry, Ergodic properties of affine transformations and flows on nilmanifolds, {\it American Journal of Mathematics}, {\bf 91} (1969), 757--771.

\bibitem{Pansiot1984}
J.-J. Pansiot,
Complexit\'e des facteurs des mots infinis engendr\'es par morphismes it\'er\'es,
Automata, languages and programming ({A}ntwerp, 1984),
{\it Lecture Notes in Comput. Sci.} {\bf 172} (1984),
380--389.

\bibitem{QZ} A. Quas, L. Zamboni, Periodicity and local complexity, 
\textit{Theoret. Comput. Sci.} {\bf 319} (2004), 229--240. 

\bibitem{Que} M. Queff\'elec, Substitution dynamical systems--spectral analysis, {\it Lecture Notes in Mathematics} {\bf 1294} (1987), Springer-Verlag.

\bibitem{ST} V. Salo, I. T\"orm\"a, Block maps between primitive uniform and Pisot substitutions, to appear in {\it Ergodic Theory Dynam. Systems}.


\bibitem{ShTa} 
Y. Shalom, T.  Tao,
A finitary version of Gromov's polynomial growth theorem, 
{\it Geom. Funct. Anal.} {\bf 20} (2010), 1502--1547. 

\bibitem{SY} 
S. Shao, X. Ye, Regionally proximal relation of order $d$ is
an equivalence one for minimal systems and a combinatorial
consequence,  
\textit{Adv. Math.} {\bf 231} (2012), 1786--1817.

\bibitem{W} 
P. Walters, Topological conjugacy of affine transformations of tori, \textit{ Trans. Amer. Math. Soc.} {\bf 131} (1968), 40--50.

\bibitem{Ward} T. Ward, Automorphisms of $\Z^d$-subshifts of finite type, {\it Indag. Math. (N.S.)} {\bf 5} (1994), 495--504.

\end{thebibliography}
\end{document}